\setheadfoot{\onelineskip}{2\onelineskip} 
\theoremstyle{plain}
\newtheorem{proposition}{Proposition}
\newtheorem{corollary}[proposition]{Corollary}
\newtheorem*{thm}{Theorem}
\newtheorem{lemma}[proposition]{Lemma}
\newtheorem{satz}{Satz}
\theoremstyle{definition}
\theoremstyle{remark}
\newtheorem*{remark}{Remark}
\newcommand{\defeq}{\stackrel{\mathrm{def}}{\, = \,}}
\newcommand{\bZ}{{\mathbb{Z}}}
\newcommand{\bR}{{\mathbb{R}}}
\newcommand{\bC}{{\mathbb{C}}}
\newcommand{\bQ}{{\mathbb{Q}}}
\newcommand{\bF}{{\mathbb{F}}}
\newcommand{\fp}{{\mathfrak{p}}} 
\newcommand{\fP}{{\mathfrak{P}}} 
\newcommand{\cO}{{\mathcal{O}}} 
\newcommand{\cC}{{\mathcal{C}}} 
\newcommand{\mf}{\mathfrak} 
\newcommand{\mc}{\mathcal} 
\begin{document} 

\title{Artin's First Article on the Artin $L$-Series (1924): \emph{Paraphrasis} and Commentary}

\author{Commentary by W. E. Aitken}

\date{December 2022 Edition\thanks{Copyright \copyright \ 2022 by Wayne Edward Aitken.
Version of \today. 
This work is made available under a Creative Commons Attribution 4.0 License. 
Readers may copy and redistribute this work under the terms of this license. Thanks to Jason Martin for his comments and suggestions on earlier drafts.}}

\maketitle

This document started as a kind of mathematically oriented freestyle translation of  E.~Artin's \emph{\"Uber eine neue Art von $L$-Reihen} (``A New Kind of $L$-Series'')~\cite{Artin1924}
with added commentary.\footnote{Artin's 
article is 20 pages long. As the reader will notice, this version has gained some length since it departs from Artin's elegant but succinct style, and includes  a
generous amount of commentary.} 
However, I made enough changes  in notation, terminology and even in
the details of  proofs that the term \emph{translation} is perhaps not entirely appropriate. Instead, I
decided to borrow the
Latin term  \emph{paraphrasis} which was in turn borrowed from Greek. This term connotes a very close connection to the original article, while giving me
license to make adaptations here and there for the benefit of a modern reader. It also allows me to focus on  the  mathematical description and development without
the responsibility capturing the subtle nuances of German mathematical writing style from almost one hundred years ago.\footnote{A good thing too; my knowledge of German and the mathematical conventions of the period is not good enough for a faithful literal translation. Readers with a strong historical interest are encouraged to read this paraphrasis alongside the German original to get a fuller picture.}
This license allows me to sneak in clarifying devices, such as commutative diagrams, which were not current in the 1920s.

 My goal is to give a modern reader access to the mathematics of~\cite{Artin1924}, but not necessarily to follow the stylistic and mathematical conventions 
 of the 1920's. For example, in his article Artin does not use the usual notation $\mc O_k$ for the ring of integers of a number field $k$, but 
 instead just speaks of integers in $k$. He speaks of prime ideals in $k$ instead of in the the ring of integers~$\mc O_k$. This is just one of several conventions that
 I have chosen not to follow in this \emph{paraphrasis}. In one case I have even changed the statement of a result, and the proof, to be a bit more
 general since it was easy to do so using Artin's methods.
 I have even added section titles to supplement Artin's  simple section numbering, and I have added a bibliography.
  So the reader should expect these sorts of changes from Artin's original. However, I hope these changes are in a mathematical sense all minor, and that I have captured the spirit of things well. My intent is to  open up this great landmark of mathematics to modern number theorists and transport the reader to the genesis of the all important Artin L-Series. I have been faithful the order of presentation; the sectioning and section numbers, numbered equations, and the numbering of main results (\emph{S\"atze})  are all faithful to~\cite{Artin1924}.

In this paper Artin introduces what are known as \emph{Artin $L$-series}, but the paper has much more. It has the first statement, and proofs in many  cases, of
\emph{Artin's Reciprocity Law}, arguably the most important result in class field theory. It has the analytic continuation and functional equations for these new L-series: actually he gives meromorphic continuations but only for powers of the $L$-series. So the analytic continuations he gives here are viewed as possibly ``multivalued''. He conjectures that these are single-valued, i.e., that the $L$-series are indeed meromorphic on $\bC$, and proves this in the significant case where the Galois group is $A_5$ (the Icosahedral case). Note that the Abelian case of the single-valuedness claim follows from Hecke's earlier work together with Artin's reciprocity law; the general case was handled much later by Brauer in 1947. Artin goes further and conjectures that primitive $L$-series are
entire (holomorphic) when they are not equal to Dedekind zeta functions, and gives some evidence in the the $A_5$ case. This conjecture remains open, even for the specific case of~$A_5$ extensions.
Finally, Artin gives a proof (assuming the Reciprocity Law) of the Chebotaryov density theory. Unknown to Artin, at about the same time as Artin was writing~\cite{Artin1924}, Nikolai Chebotaryov proved this same result, a conjecture of Frobenius, but with a different method that, in an interesting twist, would be the inspiration for Artin's definitive proof of the Reciprocity law of 1927. So all in all, this is an amazingly rich and interesting paper.

The paper, published in 1924, reflects a seminar in Hamburg in  July 1923. 
This was the initial presentation of the theory, but is not the final word on the birth of the theory.
It suffered from gaps that were soon fixed by Artin himself during his Hamburg years:
\begin{enumerate}
\item
It depended on a general reciprocity law that Artin did not prove until 1927. This is the partially proved Satz 2 in the current paper.
\item
The ramified primes were not suitably handled yet. This was fixed in 1930, with a factor for the ``infinite prime'' and a theory of conductors.
(See \cite{Artin1931})
\end{enumerate}
The work on conductors made an impact on the number theorist Hasse who was essentially 
the same age as Artin, and inspired Noether's work on her theorem on normal integral bases. 
So the work on reciprocity and conductors makes this work of interest beyond concern for Artin $L$-series per se.

At the time of this paper, Artin was just starting his mathematical career. He received his PhD under Herglotz in 1921, in Leipzig.
After a year at G\"ottingen he accepted  a permanent position in Hamburg in 1922  and stayed there for 15 years. It was a very rich and productive time in Artin's career
and which came to an end when Artin moved to the United States to escape the Third Reich.
(See~\cite{Roquette2003} for additional historical perspective.)


\chapter*{``Concerning a new kind of L-Series'': Introduction}

By E. Artin in Hamburg.

\medskip

{\color{blue} 
In what follows the black paragraphs give my very free translation of Artin's original paper. The blue paragraph gives my notes and comments.
A similar convention will hold for footnotes.

We start with the with the introduction, which consists of a short paragraph:}

\medskip

For investigating non-Abelian algebraic number fields one needs a new kind of $L$-series
that generalizes the usual $L$-series for Abelian algebraic number fields. 
These analytic functions are formed with Frobenius style group characters.
This article is dedicated to the investigation of such functions.

\chapter{Frobenius Style Group Characters: Review}

For the convenience of the reader, I will begin by briefly giving the  formulas and notation that we will need from
the theory of group characters.\footnote{See J. Schur 1905, \emph{Neue Begr\"undung der Theorie der Gruppencharaktere} (New foundation for the theory of group characters),
Sitzungsberichte (conference reports), Berlin, and Speiser~\cite{SpeiserGroups} Chapters~10-12.}

Let $G$ be a finite group of order~$n$. Decompose $G$ into $x$ conjugacy classes~$C_1, \ldots, C_x$,  and let
$h_i$ be the number of elements of $C_i$.

Let $\Gamma$ be a representation of the group $G$ as nonsingular matrices. Given $\Gamma$ we get a \emph{character}~$\chi$
which is a function $G\to \bC$ that assigns to $\sigma \in G$ the trace of the associated matrix.
There are~$x$ irreducible representations $\Gamma_1, \ldots, \Gamma_x$, and let $\chi^1, \ldots, \chi^x$ be their associated characters.
These characters are called \emph{simple characters}. Every character $\chi$ is in fact the linear combination of simple characters:
\begin{equation}\label{E1}
\chi(\sigma) = \sum_{i=1}^x r_i \chi^i(\sigma)
\end{equation}
where $r_i$ are nonnegative integers associated with the decomposition of  $\Gamma$ into irreducible representations.

The simple characters satisfy the following formulas
\begin{equation}\label{E2}
\sum_{\sigma} \chi^i (\sigma) \chi^k(\sigma^{-1}) = n \delta_{i k}
\end{equation}
and
\begin{equation} \label{E3}
\sum_{i=1}^x \chi^i(\sigma) \chi^i (\tau^{-1}) 
= 
\begin{cases} 0 \text{ if $\sigma$ and $\tau$ are in different classes}, \\ \frac{n}{h_r} \text{ if $s$ and $\tau$ are both in the class $C_r$}. \end{cases}
\end{equation}

Furthermore, suppose $H$ is a subgroup of $G$ and that
\begin{equation}\label{E4}
G = \sum_{i=1}^s H S_i
\end{equation}
is the decomposition into cosets (here $S_i \in G$).

Let $\Delta$ be a representation of the subgroup $H$ of degree $\delta$, and let $A_\sigma$ be the matrix associated to $\sigma \in H$.
If $\sigma \in G$ is not in $H$ we take $A_\sigma$ to be the zero matrix.  We build the matrix $B_\sigma$ out of blocks in the following way:
\begin{equation}\label{E5}
B_\sigma = \left(  
A_{S_i \sigma S_k^{-1} }\right).
\end{equation}
As stated this is  an $s$ by $s$ square matrix with entries equal to $\delta$ by $\delta$ square matrices, where $s$ is the index of $H$ in~$G$.
But we regard $B_\sigma$ as defining a square~$s\delta$ by $s \delta$ matrix, and it turns out that this gives a representation of $G$
called the representative of $G$ induced by the representation of $H$.\footnote{See Speiser~\cite{SpeiserGroups} \S 52
from  which formula (44) can easily be easily derived.
See also an 1898 report by Frobenius called \emph{\"Uber Relationen zwischen den 
Charakteren einer Gruppe und denen ihrer Untergruppen} (Concerning the  connection between the characters of a group
and those of its subgroups).}

{\color{blue}
\begin{remark}
In the above $A_{S_i \sigma S_k^{-1}}$ designates the $(i, k)$ block  (using the $i$th row partition and $k$th column partition). There are $s^2$ such blocks total, and each block is a~$\delta$ by $\delta$ matrix. 
So the induced representation is given concretely in terms of~$s \delta$ by $s \delta$ matrices associated to each $\sigma \in G$.

Artin is viewing the group acting on the right of vectors. If we act on the left, which is common today,
we end up with the $(i, k)$ block looking like $A_{S_i^{-1} \sigma S_k}$.
\end{remark}
}

If $\psi$ is the character of 
the representation $\Delta$ then the character $\chi_\psi$ associated to the representation~(\ref{E5}) is called the \emph{character of $G$ induced by the 
character $\psi$
of~$H$}.

Let $\psi_1, \ldots, \psi_\lambda$ be the simple characters of the subgroup~$H$.
Then we can express the restriction of $\chi^i$ to $H$ as a nonnegative integral linear combination of $\psi_1, \ldots, \psi_\lambda$
with nonnegative integer coefficients $r_{1 i}, \ldots, r_{\lambda i}$:
\begin{equation} \label{E6}
\chi^i(\tau) = \sum_{\nu = 1}^\lambda r_{ \nu i} \psi_\nu (\tau)\qquad (i=1, \dots, x)
\end{equation}
for all $\tau \in H$.
Similarly, we can express the induced character $\chi_{\psi_i}$
as a nonnegative integral linear combination of the simple characters $\chi_1, \ldots, \chi_x$ of $G$,
and in fact the nonnegative coefficients are just the coefficients that arise in (\ref{E6}):
\begin{equation}  \label{E7}
\chi_{\psi_i} (\tau) = \sum_{\nu =1}^x r_{i \nu} \chi^{\nu} (\tau) \qquad (i=1, \dots, \lambda).
\end{equation}
for all $\tau \in G$.


{\color {blue} 
\begin{remark}
The above is an expression of the Frobenius reciprocity law.
 The version of Serre~\cite{Serre1977} Section 7.1  can be written
 $$
 \left< \psi, \mathrm{Res}\, \chi  \right>_H
 =
  \left<\mathrm{Ind}\,\psi,  \chi  \right>_G.
 $$
The above statement can be derived from this.
 \end{remark}
}

\chapter{Construction of the $L$-Series}

From now on let $k$ be an algebraic number field, let $K$ be a Galois extension of $k$, and let~$G$ be the Galois group of $K / k$.

Let $\mf p$ be a prime ideal in the ring of integers of $k$ not dividing the relative discriminant of $K/k$. Let $\mf P$ be a prime
ideal of $\mc O_K$ dividing $\mf p \mc O_K$.

We chose an element $\sigma \in G$ such that for all algebraic integers $A$ in $K$ we have
\begin{equation}\label{E8}
\sigma A \equiv A^{N \mf p} \pmod {\frak P}
\end{equation}
where $N\frak p$ is the norm of $\frak p$ in $k$.
For the existence of such a $\sigma$ see Weber's Algebra~\cite{Weber2v1899}, \S 178 (volume 2).

This congruence determines $\sigma$ uniquely given a choice of $\frak P$,  since if $\sigma_1$ satisfies the same congruence then, for all algebraic integers $A$ in $K$,
$$
\sigma^{-1} \sigma_1 A \equiv A  \pmod {\frak P},
$$
and so $\sigma^{-1} \sigma_1$ belongs to the inertia group (Tr\"agheitsgruppe) of $\frak P$. By our assumption ($\frak p$ not dividing the relative discriminant)
the inertia group is trivial.

Next suppose one chooses $\frak P'$ instead of $\frak P$
as a designated prime divisor of $\mf p \mc O_K$. Since~$G$ acts transitively on primes above $\frak p$, we have
$
\tau \frak P = \frak P'
$
for some $\tau \in G$. It is easy to check that one gets~$\tau \sigma \tau^{-1}$  as the corresponding element of $G$ (where $\sigma$ is the 
corresponding element for $\frak P$).

So we have a way to associate to $\frak p$ a well-defined conjugacy class $C$ of $G$. 
It is well-known that each element of $C$ generates the decomposition group
for some~$\frak P$ above $\frak p$ but this property does not in general completely determine the class $C$ (in fact certain powers
of this class $C$ with have this property).\footnote{\label{FrobFoot}This assignment of conjugacy classes to  prime ideals was
already carried out by Frobenius. See the 1896 Berlin report called \emph{\"Uber Beziehungen zwischen Primidealen eines algebraischen K\"orpers und den Substitutionen 
seiner Gruppe} (concerning the relationships between prime ideals of an algebraic field and the elements of its Galois group).}
We will say that the prime ideal $\frak p$ \emph{belongs to the class $C$} and we will write this class as $C_{\frak p}$.

{\color {blue} 
\begin{remark}
We call each element of $C_{\frak p}$ a \emph{Frobenius element}, and the class as a whole the \emph{Frobenius class},
in honor of Frobenius who, as Artin points out in the footnote, developed this idea earlier.
Artin does not really use these terms in the German original of this paper, but I will use them in the translation below for the convenience of the modern reader.
 \end{remark}
}

From now on let $\Gamma$ be a linear representation of $G$. For $\frak p$ as above let $A_{\frak p}$ be a matrix associated to an element of $C_{\frak p}$ via $\Gamma$.
Since the elements of $C_{\frak p}$ are conjugate, the characteristic polynomial 
$$
\left| E - t A_{\frak p} \right|
$$ of $A_{\frak p}$ does not depend on the choice of $A_{\frak p}$. Here $E$ is the identity matrix and, as usual, the absolute values indicates determinant.
Note that $A_p$ will change by a conjugate if~$\Gamma$ is replaced by an equivalent representation, so the characteristic polynomial only depends
on the representation $\Gamma$ up to equivalence.

We define the associated $L$-series by the formula 
\begin{equation}\label{E9}
L(s, \chi; k) = \prod_{\mf p} \frac{1}{| E - \left(N \mf p\right)^{-s} A_{\mf p} |}
\end{equation}
where  $s$ is a complex variable and $\chi$ denotes the character associated with the representation $\Gamma$.
Here, the product varies only for the set of prime ideals $\mf p$ of $\mc O_k$ that do no divide the relative discriminant of $K/k$.

{\color {blue} 
\begin{remark}
In a later paper, Artin gives an explicit formula for terms associated to primes that do divide the relative discriminant of $K/k$. 
Note that the above $L$ series is expressed using $\chi$ instead of $\Gamma$, since $\chi$ determines $\Gamma$ up to equivalence and
so determines the expression on the right-hand side of (\ref{E9}).
 \end{remark}
}

The function $L(s, \chi; k)$ converges absolutely and uniformly on any closed and bounded region in the half plane $\frak R (s) > 1$. To see this observe
that every root of the characteristic polynomial $\left| E - t A_{\frak p} \right|$ is a root of unity. Thus $L(s, \chi; k)$ is a product of terms of the form
$$
\frac{1}{1 - \left(N \frak p\right)^{-s} \varepsilon }
$$
where $\varepsilon$ is a root of unity.

{\color {blue} 
\begin{remark}
Since $A_\mf p$ has finite order it is diagonalizable with eigenvalues all equal to roots of unity.
So its characteristic polynomial factors as described by Artin. 

Some of the convergence issues can be handled with the following well-known criterion: if an infinite series $\sum |a_i|$ converges then the corresponding infinite product~$\prod (1+a_i)$ converges, and the terms $1+a_i$ can be reordered freely with convergence to the same result. Furthermore, if each term $1+ a_i$ is nonzero then
the limit is nonzero. (See, for example,  \cite{Stein2}, Chapter 5, Proposition 3.1 for some justification.)

On the other hand, 
it might be convenient to wait on convergence issues until we have the formula for the logarithm given by Artin below. 
 \end{remark}
}

One can now expand (\ref{E9}) in a Dirichlet series and express the coefficients in terms of the character $\chi$.
The resulting formulas are not very clear (``Die Formeln werden aber wenig übersichtlich''). On the other hand, we arrive at a simple formula for the 
logarithm of~(\ref{E9}).

First we associate a conjugacy class $C_{\mf p^\nu}$ to any power $\mf p^\nu$ of a prime ideal $\mf p$. We simply take the class consisting of $A_{\frak p}^\nu$
where $A_{\frak p} \in C_{\frak p}$.
It is easy to see that this forms a conjugacy class of~$G$. We write
\begin{equation}\label{E10}
\chi \left( \frak p^\nu \right) = \chi (\sigma)
\end{equation}
where $\sigma$ is any member of $C_{\frak p^\nu}$.

Now let $\varepsilon_1, \varepsilon_2, \ldots, \varepsilon_f$ be the roots of the equation $\left| Et  -  A_{\frak p} \right| = 0$. Then
\begin{equation}\label{E11}
\chi \left( \frak p^\nu \right) = \varepsilon_1^\nu + \varepsilon_2^\nu + \ldots + \varepsilon_f^\nu.
\end{equation}
So we get for $|t| < 1$
$$
-\log|E - t A_{\frak p}| = - \sum_{i=1}^f \log (1 - t \varepsilon_i) = \sum_{i=1}^f \sum_{\nu=1}^\infty \frac{\varepsilon_i^\nu}{\nu} t^\nu
=
\sum_{\nu=1}^\infty \frac{\chi(\frak p^\nu)}{\nu} t^\nu
$$

{\color {blue} 
\begin{remark}
Here we understand $\log$ as a multivalued functions, or equivalently we regard some of our equations as being valid modulo $(2\pi  i ) \bZ$.
So for example, the first equation above can be regarded as a congruence modulo~$(2\pi  i )\bZ$.
 \end{remark}
}

This leads to the desired formula:
\begin{equation}\label{E12}
+ \log L(s, \chi; k) = \sum_{\frak p^\nu} \frac{\chi(\frak p^\nu)}{\nu \left(N \frak p^\nu \right)^{s}},
\end{equation}
where the sum varies over all powers of prime ideals of $k$ not dividing the relative discriminant of $K/k$.

{\color {blue} 
\begin{remark}
Associated convergence issues can be justified by the observation that 
$$
\sum_{\mf p^\nu} 
\left| 
\frac{\chi(\frak p^\nu)}
{\nu \left( N  \mf p^\nu \right)^s } 
\right| 
\; < \;
\sum_{M=1}^\infty
m \frac{f}
{  M ^{\sigma_0}} 
= mf \zeta(\sigma_0) < \infty
$$
assuming that $\Re(s) \ge s_0 > 1$. Here $\zeta(s)$ is the  classical Zeta function,
the left sum is taken over all ideals of the form $\mf p^\nu$, in any order, where $\mf p$ is a prime ideal of~$\mc O_k$ relatively prime to the relative
discriminant of $K/k$, and $\nu$ is a positive integer. Also
 $f$ is the degree of the representation $\Gamma$ and~$m$ is the degree~$[k\colon \bQ]$, so that at most $m$ ideals of the form $\mf p^\nu$ can share the same norm.
 
In particular we have the absolute convergence of
 $$
 \sum_{\mf p^\nu} 
\frac{\chi(\frak p^\nu)}
{\nu \left( N  \mf p^\nu \right)^s } 
$$
which justifies the manipulations above. We also get uniform convergence on the set $\Re(s) \ge s_0$ for each $s_0 > 1$, and so the sum gives
a homomorphic function on the set defined by~$\Re(s) > 1$.
By exponentiation we get the desired convergence properties for our Euler product expansion of $L$ as well, 
including the  invariance under reordering of terms with a product that defines a holomophic function in $s$
with no zeros on the set defined by $\Re(s) > 1$.
 \end{remark}
}

Either from (\ref{E9})
or even better from (\ref{E12}) one sees that 
\begin{equation}\label{E13}
L(s, \chi + \chi') = L(s, \chi) L(s, \chi')
\end{equation}
for any two characters $\chi$ and $\chi'$.

If $\chi$ is a simple character then we will call the associated $L$-series a \emph{primitive $L$-series}.
If $\chi$ is a general character expressed in terms of simple characters, as in~(\ref{E1}) then (\ref{E13}) gives us
\begin{equation}\label{E14}
L(s, \chi) = \prod_{i=1}^x \left( L(s, \chi^i) \right)^{r_i}.
\end{equation}

A brief remark about the dependence on the field $K$: suppose $\Omega$ is an extension of $K$ that is Galois over $k$.
Then $G = \mathrm{Gal}(K/k)$ is isomorphic to 
the quotient group~$\mathrm{Gal}(\Omega/k) / \mathrm{Gal}(\Omega/K)$. If $\sigma \in \mathrm{Gal}(\Omega/k)$ is
such that (\ref{E8}) is valid for all algebraic integers in $\Omega$ then it will of course be valid for all algebraic integers in~$K$.
Furthermore,  (\ref{E8}) will be valid for algebraic integers $A$ in $K$ if we replace $\sigma$ with any element of the coset~$\sigma \, \mathrm{Gal}(\Omega/K)$.
Next observe that every character of~$\mathrm{Gal}(\Omega/k) / \mathrm{Gal}(\Omega/K)$
is a character of $\mathrm{Gal}(\Omega/k)$, and every simple character of~$\mathrm{Gal}(\Omega/k) / \mathrm{Gal}(\Omega/K)$
is a simple character of $\mathrm{Gal}(\Omega/k)$. In particular every $L$-series using $K$ as the extension will essentially be an $L$-series using $\Omega$
as the extension, and if the $L$ series is primitive using $K$ then it will be primitive using $\Omega$.
However, the relative discriminant of~$\Omega/k$ may exclude a finite number of prime factors in the $L$-series that occur using the relative discriminant of~$K/k$.
But we will consider $L$-series that differ from each other by only a finite number of factors as being essentially the same.
By the way, we will be able to normalize the $L$-series later to be truly invariant of $K$.

\color{blue}
\begin{remark}
The above uses a fundamental compatibility principle for  of the Frobenius element associated with two extensions $\Omega/k$ and $K/k$ of a common base field $k$.  
This principle  is needed in several places in this paper, so I will go ahead and codify it as a lemma. 
 I will switch the roles of $K$ and $\Omega$ here since in what follows $\Omega$ is often used to denote an intermediate field.
\end{remark}

\begin{lemma} \label{triple_lemma}
Suppose $K/k$ is a Galois extension of number fields with Galois group~$G$ and let $\Omega$ be an intermediate field such that $\Omega/k$ is also Galois.
Let $\mf p$ be a prime ideal of $\mc O_k$ not dividing the relative discriminant of $K/k$, let $\mf q$ be a prime ideal of~$\mc O_\Omega$ above $\mf p$, and
let $\mf P$ be a prime ideal of $\mc O_K$ above $\mf q$. In other words we have a triple extension $K/\Omega/k$ with corresponding prime ideals $\mf P, \mf q, \mf p$.

Then if $\sigma \in G$ is the Frobenius element associated to $\mf P$, then the restriction~$\sigma'$ of $\sigma$ to $\Omega$ is the Frobenius element
of $\mf q$ in the Galois group of~$\Omega/k$. When we identify the Galois group of $\Omega/k$ with $G/H$
where  $H$ is the Galois group of $K/\Omega$, then this Frobenius element $\sigma'$
is the coset $\sigma H \in G/H$.
\end{lemma}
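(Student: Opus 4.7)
The plan is to verify directly that the restriction $\sigma' = \sigma|_\Omega$ satisfies the defining congruence of the Frobenius element for $\mf q$, and then invoke the uniqueness of Frobenius discussed just before the statement.

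First, since $\Omega/k$ is Galois, every $\tau \in G = \mathrm{Gal}(K/k)$ carries $\Omega$ to itself, so restriction $G \to \mathrm{Gal}(\Omega/k)$ is well defined, and its kernel is exactly $H = \mathrm{Gal}(K/\Omega)$. In particular $\sigma' \defeq \sigma|_\Omega$ is a well-defined element of $\mathrm{Gal}(\Omega/k)$, and under the canonical isomorphism $\mathrm{Gal}(\Omega/k) \cong G/H$ it corresponds to the coset $\sigma H$. This takes care of the last assertion.

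Next I would check the congruence. By hypothesis, for every $A \in \mc O_K$ we have
$$
\sigma A \equiv A^{N\mf p} \pmod{\mf P}.
$$
Restricting to $A \in \mc O_\Omega \subseteq \mc O_K$, we have $\sigma A = \sigma' A \in \mc O_\Omega$, and the displayed congruence gives $\sigma' A - A^{N\mf p} \in \mf P \cap \mc O_\Omega = \mf q$. Thus $\sigma' A \equiv A^{N\mf p} \pmod{\mf q}$ for all $A \in \mc O_\Omega$, which is exactly the defining property (\ref{E8}) of the Frobenius element of $\mf q$ in $\mathrm{Gal}(\Omega/k)$.

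To conclude that $\sigma'$ really is the Frobenius at $\mf q$ (and not merely an element satisfying the congruence up to inertia), one needs to know that $\mf p$ is unramified in $\Omega$, so that the uniqueness argument given by Artin applies in the intermediate extension. The main technical step is therefore to justify that $\mf p$ does not divide the discriminant of $\Omega/k$. This follows from the tower formula for discriminants, or more directly from the fact that the inertia group of $\mf q$ in $\mathrm{Gal}(\Omega/k)$ is the image under restriction of the inertia group of $\mf P$ in $G$, which is trivial by our assumption that $\mf p$ does not divide the discriminant of $K/k$. With the inertia at $\mf q$ trivial, the congruence above determines $\sigma'$ uniquely, completing the proof.
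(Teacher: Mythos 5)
Your proof is correct and follows essentially the same route as the paper: restrict the congruence $\sigma A \equiv A^{N\mf p} \pmod{\mf P}$ to $A \in \mc O_\Omega$ and use $\mf P \cap \mc O_\Omega = \mf q$. Your added remark that $\mf p$ is unramified in $\Omega/k$ (so the congruence pins down $\sigma'$ uniquely) is a point the paper's proof leaves implicit, and it is a welcome bit of extra care rather than a different argument.
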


\begin{proof}
By (\ref{E8}) we have that $\sigma A - A^{N \mf p} \in \mf P$ for all $A \in \mc O_K$.
So
$$
\sigma' A - A^{N \mf p} \in \mf P \cap \mc O_\Omega = \mf q
$$
for all $A \in \mc O_\Omega$. Hence
\begin{equation*}
\sigma' A \equiv A^{N \mf p} \pmod {\frak q}
\end{equation*}
for all $A \in \mc O_\Omega$, and so $\sigma'$ is the desired Frobenius element.
By basic Galois theory,~$\sigma'$ corresponds to the coset $\sigma H \in G/H$.
\end{proof}
\color{black}

\chapter{The  Theorem on Induced Representations}

Let $H$ be a subgroup of $G$, let $\Omega$ be the subfield of $K$ fixed by $H$, so~$H$ is the Galois group of $K/\Omega$.

The first main theorem (Satz 1) concerns the following situation: 
\begin{itemize}
\item $\Delta$ is a representation of $H$.
\item $\Gamma_\Delta$ is the induced representation of $G$. 
\item $\psi$ is the character of $\Delta$, and $\chi_\psi$ is the character of $\Gamma_\Delta$.
\item Exclude as factors of $L(s, \psi; \Omega)$ any prime dividing the relative discriminant of $K/k$ (considered as an ideal of the 
ring of integers of $\Omega$).
\end{itemize}

\begin{satz} 
In the situation discussed above
\begin{equation}\label{E15}
L(s, \psi; \Omega) = L(s, \chi_\psi ; k).
\end{equation}
\end{satz}

\begin{proof}
We set up the following notation:
\begin{itemize}
\item
Let $\mf p$ be a prime ideal of $\cO_k$ not dividing the relative discriminant of $K/k$.
\item
Let  $\mf q_1,  \mf q_2, \ldots, \mf q_r$ be the prime ideals of $\cO_\Omega$ dividing $\fp \cO_\Omega$:
$$
\fp \cO_\Omega = \mathfrak q_1 \mathfrak q_2 \cdots \mathfrak q_r.
$$
\item
Let~$l_i$ be the relative degree of $\mathfrak q_i$ over $\fp$. In other words, $(N \fp)^{l_i}$ is the size~$N \mathfrak q_i$ of
the residue field~$\cO_\Omega/ \mathfrak q_i$.
\item
For each $\mf q_i$ choose a prime ideal $\fP_i$ of $\cO_K$ dividing $\mf q_i \cO_K$.
\item
For each such $\fP_i$ let $\tau_i \in G$ be chosen so that $\fP_i = \tau_i \fP_1$. (Recall that the Galois group $G$ acts transitively on
the primes of $\cO_K$ dividing $\mf p \cO_K$).
\item
Let $\sigma \in G$ be the Frobenius element associated with $\fP_1$ over $k$. In other words,
$$
\sigma A \equiv A^{N\frak p} \pmod {\fP_1}
$$
for all $A \in \cO_K$.
\item
Let $\sigma_i \defeq \tau_i \sigma \tau^{-1}_i$. Observe that
\begin{equation}\label{E16}
\sigma_i A \equiv A^{N\frak p} \pmod {\fP_i}
\end{equation}
for all $A \in \cO_K$
so $\sigma_i$ is the Frobenius element associated with~$\fP_i$. 
Thus $\sigma_i$ generates the decomposition group of $\fP_i$.
\end{itemize}

Claim: \emph{The Frobenius element associated with $\fP_i$ over $\Omega$ is equal to $\sigma_i^{l_i}$}.
To see this first observe that from (\ref{E16})
\begin{equation}\label{E17}
\sigma^{l_i}_i A \equiv A^{(N\frak p)^{l_i}}  \equiv A^{N \frak q_i} \pmod {\fP_i},
\end{equation}
So to establish that $\sigma_i^{l_i}$ is the Frobenius element we just need to show that $\sigma_i^{l_i} \in H$.
In the special case where $A = \alpha \in \cO_\Omega$ we have from (\ref{E17}) and Fermat's little theorem that
$$
\sigma^{l_i}_i \alpha \equiv \alpha^{N \frak q_i} \equiv \alpha \pmod {\fP_i}.
$$
 Since $\mf p$ does not divide the relative discriminant of~$K/k$, this means 
that~$\sigma^{l_i}_i \alpha = \alpha$ for all $\alpha \in \cO_\Omega$ and hence for all $\alpha \in \Omega$.
So $\sigma^{l_i}_i  \in H$ as desired.

\color{blue}
\begin{remark}
Note that $\sigma_i$ is in the decomposition group of $\mf P_i$, and so $\sigma_i^{l_i}$ is, of course, in this decomposition group.
Since $\mf P_i$ is unramifield over $\mf q_i$, the canonical map from the decomposition group of $\mf P_i$ to the Galois group of $\cO_K/\mf P_i$
is injective.
\end{remark}
\color{black}

Next we observe that $l_i$ is the smallest positive power $\nu$ of $\sigma_i$ such that $\sigma_i^\nu \in H$.
To see this observe that if $\sigma_i^\nu \in H$ then by (\ref{E16})
$$
\sigma^{\nu}_i \alpha = \alpha \equiv  \alpha^{(N\mathfrak p)^{\nu}} \pmod {\frak P_i}
$$
for all $\alpha \in \cO_\Omega$. Thus $(N\mathfrak p)^{\nu} \ge (N \fp)^{l_i} = N \mf q_i$ and so $\nu \ge l_i$.

\color{blue}
\begin{remark}
The last step becomes clear when we observe that every element of the residue field has been shown to be a root of $X^{(N\mathfrak p)^{\nu}} - X$ which 
is a polynomial in~$X$ of 
degree $(N\mathfrak p)^{\nu}$. But the residue field has  $(N \fp)^{l_i} = N \mf q_i$ elements.
\end{remark}
\color{black}

Claim: \emph{Consider cosets $H \sigma_{\nu}^a \tau_\nu$ and~$H \sigma_{\mu}^b \tau_\mu$. These cosets are equal if and only if
$\nu = \mu$ and~$a \equiv b \pmod {l_\nu}$.
}

One direction of this claim is straightforward since $\sigma_\nu^{l_\nu} \in H$, so if $a \equiv b$ modulo~$l_\nu$ then 
$H \sigma_{\nu}^a =H \sigma_{\nu}^b$ and so $H \sigma_{\nu}^a \tau_\nu = H \sigma_{\nu}^b \tau_\nu$.
For the other direction, assume that~$H \sigma_{\nu}^a \tau_\nu = H \sigma_{\mu}^b \tau_\mu$, and so $$\sigma_{\nu}^a\;  \tau_\nu = \tau_0 \; \sigma_{\mu}^b \; \tau_\mu$$
with $\tau_0 \in H$. So by the definition of $\sigma_\nu$ and $\sigma_\mu$
$$
\tau_0 = \sigma_{\nu}^a\;  \tau_\nu \; \tau_\mu^{-1}\; \sigma_{\mu}^{-b} = \tau_\nu \; \sigma^{a-b} \; \tau_\mu^{-1}
$$
and, since $\sigma$ is in the decomposition group of $\mf P_1$,
$$
\tau_0 \mf P_\mu =   \tau_\nu \; \sigma^{a-b} \; \tau_\mu^{-1} \mf P_\mu = \tau_\nu \; \sigma^{a-b} \mf P_1 = \tau_\nu \mf P_1  = \mf P_\nu.
$$
Since $\tau_0 \in H$, it is the identity map on $\mf P_\mu \cap \mc O_\Omega = \mf q_\mu$, but the image of $\mf P_\mu \cap \mc O_\Omega$
is $\mf P_\nu \cap \mc O_\Omega = \mf q_\nu$. So $\mf q_\mu = \mf q_\nu$. Thus $\mu = \nu$.  We 
then have $\sigma_{\nu}^a\;  \tau_\nu = \tau_0 \; \sigma_{\nu}^b \; \tau_\nu$ so that~$\sigma_\nu^{a-b} \in H$
which implies that $a \equiv b \pmod {l_\nu}$.

So we have identified $l_1 + \ldots + l_r$ distinct cosets of $H$. 
But we know that $$l_1 + \ldots + l_r = [\Omega \colon k] = [G: H].$$
So we have identified all the right cosets of $H$.

Note  that $H \sigma_{\nu}^a \tau_\nu = H \tau_\nu \sigma^a$, and so
have coset representations, as in (\ref{E4}) with $S_i$ varying in the sequence
$$
\tau_1, \tau_1 \sigma, \ldots, \tau_1 \sigma^{l_1-1}, \tau_2,  \tau_2 \sigma, \ldots \ldots, \tau_r, \tau_r\sigma, \ldots, \tau_r \sigma^{l_r-1}.
$$
In other words, each $S_i$ is of the form $\tau_\nu \sigma^a$ with $0\le \nu \le r$ and $0\le a < l_\nu$.

According to (\ref{E5}), in  the induced representation $\Gamma_\Delta$ of $G$, the element $\sigma \in G$ is represented by the matrix
described in terms of blocks as follows:
$$
B_\sigma = \left(  
A_{S_i \sigma S_k^{-1} }\right)
=
\left(  
A_{\tau_\nu \sigma^{a-b + 1} \tau_{\mu}^{-1} }
\right)
$$
where, as above, $A_{\tau_\nu \sigma^{a-b + 1} \tau_{\mu}^{-1} }$ is the zero block if $\tau_\nu \sigma^{a-b + 1} \tau_{\mu}^{-1}$ is not in $H$.

\color{blue}
\begin{remark}
Here the row blocks are indexed by $(\nu, a)$ and the column blocks are indexed by $(\mu, b)$.
\end{remark}
\color{black}

Note that $\tau_\nu \sigma^{a-b + 1} \tau_{\mu}^{-1} \in H$ if and only if $\tau_\nu \sigma^{a-b + 1} \in H \tau_\mu$.
But since $\tau_\nu \sigma^{a-b + 1}$ is in the coset $H \sigma_\nu^{a-b+1}\tau_\nu$ we conclude that the block is zero unless
both~$\mu = \nu$ and~$a-b+1 \equiv 0 \pmod {l_\nu}$.

For a fixed $\nu$ we can consider the square matrix  $C_\nu$
which is described as a block matrix whose $(a, b)$ block is the $\delta$ by $\delta$ square matrix
 $\left( A_{\tau_\nu \sigma^{a-b + 1} \tau_{\nu}^{-1} }\right)$.
In particular the $(a, b)$ block is zero unless $a-b+1 \equiv 0 \pmod {l_\nu}$. Note that $C_\nu$ is an $\l_\nu \delta$ by $\l_\nu \delta$ square matrix.
Then (for a suitable ordering of a basis) one can write $B_\sigma$ in terms of blocks as follows:
$$
B_\sigma = \begin{pmatrix}
C_1 & 0 & \cdots & 0 \\
0 &C_2 & \cdots & 0 \\
\vdots & \vdots &  & \vdots \\
0 & 0 & \cdots & C_r
\end{pmatrix}.
$$
If $a = 0, 1, \ldots, l_\nu -2$ then the $(a, b)$ block of $C_\nu$ is zero unless $b=a+1$, and when~$b=a+1$ the
block is $A_{\tau_\nu \sigma^{a-b + 1} \tau_{\nu}^{-1} }$ which is the $\delta$ by $\delta$ identity matrix $E$.
If~$a = l_\nu - 1$ then the $(a, b)$ block of $C_\nu$ is zero unless $b = 0$ and the $(l_\nu-1, 0)$ block is
given by~$A_{\tau_\nu \sigma^{l_\nu} \tau_{\nu}^{-1} } = A_{ \sigma_\nu^{l_\nu} }  = A_{\sigma_\nu}^{l_\nu}$.
So  $C_\nu$ decomposes into blocks as follows:
$$
C_\nu = \begin{pmatrix}
0 & E & 0& \cdots & 0 \\
0 &0 & E & \cdots & 0 \\
\vdots & \vdots & \vdots &  & \vdots \\
0 & 0 & 0 & \cdots & E\\
A_{\sigma_\nu^{l_\nu}} & 0 & 0 & \cdots & 0
\end{pmatrix}.
$$
The characteristic polynomial in $t$ is then
$$
|E -t B_\sigma| = \prod_{\nu = 1}^r    \left| E - t C_\nu \right| 
= \prod_{\nu = 1}^r 
\begin{vmatrix}
E & - t E & 0& \cdots & 0 \\
0 &E &-t E & \cdots & 0 \\
\vdots & \vdots & \vdots &  & \vdots \\
0 & 0 & 0 & \cdots & - t E\\
- t A_{\sigma_\nu^{l_\nu}} & 0 & 0 & \cdots &E
\end{vmatrix}.
$$
Adding $t$ times the first column to the second, then $t$ times the (new) second to the third, and so on, one gets
$$
|E -t B_\sigma| = \prod_{\nu = 1}^r 
\begin{vmatrix}
E & 0 & 0& \cdots & 0 \\
0 &E &0 & \cdots & 0 \\
\vdots & \vdots & \vdots &  & \vdots \\
0 & 0 & 0 & \cdots & 0\\
- t A_{\sigma_\nu^{l_\nu}} & - t^2 A_{\sigma_\nu^{l_\nu}} & - t^3 A_{\sigma_\nu^{l_\nu}} & \cdots &E - t^{l_\nu} A_{\sigma_\nu^{l_\nu}}
\end{vmatrix}.
$$
Thus
$$
|E - t B_\sigma| 
= \prod_{\nu = 1}^r 
\left| E - t^{l_\nu} A_{\sigma_\nu^{l_\nu}}  \right|.
$$

Note this last formula does not depend on the choice of  $\sigma$ since different choices of Frobenius elements gives the same characteristic polynomials.

The contribution of $\mf p$ to $L(s, \chi_\psi; k)$ is
$$
\frac{1}{|E - (N\mf p)^{-s} B_\sigma|} 
=\prod_{\nu = 1}^r 
\frac{1}{| E - (N\mf p)^{-l_\nu s} A_{\sigma_\nu^{l_\nu}}  |}
=\prod_{\nu = 1}^r 
\frac{1}{| E - (N\mf q_\nu)^{-s} A_{\sigma_\nu^{l_\nu}}  |}.
$$
We have already seen that the Frobenius element associated with $\fP_i$ over $\Omega$ is equal to $\sigma_i^{l_i}$.
So the right hand side of the above formula gives the product of the~$\mf q_\nu$ contributions to $L(s, \psi; \Omega)$.
Thus Satz 1 is proved.

\end{proof}

\chapter{Factorization of Zeta Functions}

Satz 1 gives us, for starters, a factorization of zeta functions of intermediate fields $\Omega$  in terms of primitive $L$-series associated to $K/k$

When we consider the trivial representation and the trivial character $\chi = 1$ (der Hauptcharakter $\chi_1$) we get
$$
L(s, \chi_1; k) = \prod_{\frak p} \frac{1}{| E - \left(N \frak p\right)^{-s} A_{\frak p} |} = \prod_{\frak p} \frac{1}{1 - \left(N \frak p\right)^{-s} } 
$$
which is, up to a finite number of factors, just the zeta function $\zeta_k(s)$ of the base field.

More generally if $\Omega$ is an intermediate field between $k$ and $K$, and if $H$ is the Galois group of $K/\Omega$ with trivial character (Hauptcharakter) $\psi_1$
then
$$L(s, \psi_1; \Omega) = \zeta_\Omega (s),$$
 at least up to a finite number of factors. Let $\Pi_\Omega$ be the induced representation
associated with the trivial representation of $H$. Note that $\Pi_\Omega$ is simply the representation associated with the permutation of cosets of $H$ in $G$ (so if $\Omega$
is itself Galois over~$k$, it corresponds to the regular representation of the Galois group of $\Omega$ over $k$).
Thus the associated character $\chi_\Omega$ has the property that, for any $\sigma \in G$, the value~$\chi_\Omega(\sigma)$ is the number of cosets fixed by $\sigma$ under this action,
so is determined in a most simple manner.  If we decompose~$\chi_\Omega$ in terms of primitive characters
$$
\chi_\Omega (\sigma) = \sum_{i = 1}^x g_i \chi^i(\sigma)
$$
then $g_i$ is obtained using (\ref{E2}):
\begin{equation}\label{E18}
g_i = \frac{1}{n} \sum_{\sigma} \chi_\Omega (\sigma) \chi^i(\sigma^{-1}) 
\end{equation}
($n$ is the order of $G$ and so is $n = [K:k]$).
So Satz 1 in combination with (\ref{E14}) implies
\begin{equation}\label{E19}
\zeta_{\Omega} (s) = \prod_{i=1}^x \left( L(s, \chi^i) \right)^{g_i}
\end{equation}
which is the desired factorization (up to a finite number of factors).

{\color{blue}
\begin{remark}
From what Artin has said up to this point it is  apparent that he regards~$G$ as acting on the left for its natural action on $K$, but regards~$G$ as 
acting on the right for linear representation. Under this convention
$\Pi_\Omega$ is the permutation representation of the right action of $G$ on the collection $H\backslash G$ of right cosets. However, the associated character
$\chi_\Omega(\sigma)$ is the same whether we use left actions or right actions here (in other words,  the number of left cosets fixed by $\sigma \in G$ is
the same as the number of right cosets fixed by $\sigma$).
\end{remark}
}

In the special case of $K = \Omega$ the induced representation is the regular representation and we get the simple formula
\begin{equation}\label{E20}
\zeta_{K} (s) = \prod_{i=1}^x \left( L(s, \chi^i) \right)^{f_i}.
\end{equation}

{\color{blue}
\begin{remark}
Here $f_i$ is the degree of character $\chi_i$. So if $K/k$ is Abelian, we have $f_i = 1$.
In general, all the primitive $L$-series for $K/k$ occur in the factorization.
\end{remark}
}

Formula (\ref{E19}) gives all the relations between the zeta functions of intermediate fields. To get such a relation, one uses (19) for various $\Omega$ and eliminates the $L$-series factors $L(s, \chi^i)$.
One is left with relations between zeta functions. 
So the equations~(\ref{E19}) can be regarded as parameterizing relations. We will show later (Section~8) that this is the only way to get relations between zeta functions (when we reduce to the case~$k=\bQ$).\footnote{See E.~Artin, \"Uber
die Zetafunktionen gewisser algebraisher Zahlk\"orper 
(Concerning the zeta functions of certain algebraic number fields), 
Math. Ann Bd. 89, where the relations in special cases are obtained.}
This essentially solves the problem of relations between zeta functions.

There is another way to formulate our results. Observe that the factorization~(\ref{E19}) of $\zeta_\Omega(s)$ 
runs parallel to the factorization into irreducible polynomials of the  group determinant (Gruppendeterminante) associated to the permutation representation $\Pi_\Omega$.
So one can say the following: 

One gets all the relations between zeta functions of intermediate fields by finding the relations between 
the group determinants associated with transitive permutation actions of $G$, and replacing
the group determinants with the corresponding zeta functions.

{\color{blue}
\begin{remark}
The ``group determinants'' that Artin mentions above are certain homogeneous polynomials associated to groups and their representations.
They are not as familiar today as they were when Artin wrote this paper, so I will give some details.
 They are called ``determinants'' since
they arise as  determinants of matrices with entries
that are homogeneous linear polynomials. These
polynomials were studied by Dedekind and Frobenius, and their study led Frobenius to his theory of characters of non-Abelian groups in 1896
that is in fact the basis of the current paper (see~\cite{Hawkins1971history}).
They are easy enough to define:
consider the polynomial ring~$\bC [X_{g_1}, \ldots, X_{g_n}]$ associated to a given finite group $G = \{g_1, \ldots, g_n\}$ where the $X_{g_i}$
are independent variables. If $g \mapsto A_{g}$ is a representation of $G$ by complex matrices, then the determinant associated to the representation is
simply the determinant of the following matrix:
$$
A_G \defeq \sum_{g \in G} X_g A_g.
$$
The matrix $A_G$ has a particularly nice description if the representation is a permutation representation, and even more so for the regular
representation (it is a good exercise to work these out). The determinant associated to the regular representation is called the ``group determinant'' of $G$ and can be thought of as
a fundamental algebraic invariant of $G$.

The determinant associated to a representation is an irreducible polynomial if and only if the representation is an irreducible representation,
and the decomposition of a representation is reflected in the factorization of its associated determinant. Observe also that the degree of such a determinant polynomial is equal to
the degree of the representation. Note that, historically speaking, the problem of factoring the group determinant proceeds, and in fact motives, the problem of decomposing a representation into irreducible factors that is the starting point of modern representation theory (See \cite{Hawkins1971history}).

For a simple example, the group determinant of a two-element group~$G=\{1, \sigma\}$ is just~$X_1^2 - X_\sigma^2$ which factors
as $(X_1 + X_\sigma) (X_1 - X_\sigma)$, reflecting the fact that the regular representation of $G$ decomposes into two
irreducible representations, each of degree 1.
\end{remark}
}

For now these relations are only valid up to a finite number of factors. Because of the existence of functional equations for zeta functions, we can use the well-known methods of
Herrn Hecke to show the relations are exactly valid.\footnote{E.~Hecke: \"Uber eine neue Anwendung der Zetafunktion auf die Arithmetik
der Zahlk\"orper 
(concerning a new application of zeta functions to the arithmetic of number fields)\color{black}. G\"ottinger Nachrichten 1917.}

{\color{blue}
\begin{remark}
These methods of Hecke allow us to use functional equations of zeta functions to conclude that if a relation between zeta functions is valid up to a finite number
of Euler factors, then the relation holds exactly.
(See Lemma~\ref{fixingL_lemma} below for an illustration of this phenomenon.)
\end{remark}
}

Of course, similar considerations  apply for relations between $L$-Series of intermediate fields.

{\color{blue}
\begin{remark}
We can use Artin's results to get an even more dramatic conclusion.
Suppose $\zeta_\Omega$ is a zeta function with base number field $\Omega$, or more generally consider $L$ functions with base field $\Omega$.
Then by result alluded to at the and of Section~2, we can take $K$ to be an extension of $\Omega$ that is Galois over $\bQ$.  So the above
considerations allow us to express $\zeta_\Omega$ (or more general $L$-functions) in terms of primitive~$L$-functions over $\bQ$.
Artin, in Section 8 below, will  show that this decomposition is unique.
\end{remark}
}

\chapter{The Abelian Case}

We now consider the case where $G$ is Abelian. We investigate whether the primitive~$L$-series defined in this document correspond to the 
usual $L$-series.

{\color{blue}
\begin{remark}
These earlier $L$-series were defined by Weber and generalize those defined by Dirichlet.
They are defined in terms of characters of class groups (where characters are understood here in the traditional Dirichlet-Dedekind sense
as a homomorphism from a finite Abelian group into $\bC^\times$).
\end{remark}
}

When $G$ is Abelian, each conjugacy class has a single element. So for each prime~$\mf p$  of~$k$  not dividing the relative discriminant of~$K/k$ there is exactly one Frobenius
element $\sigma \in G$, and (\ref{E8}) holds for all primes $\mf P$ in $K$ above $\mf p$.
One can replace (\ref{E8}) with the congruence
\begin{equation}\label{E21}
\sigma A \equiv A^{N \frak p} \pmod {\mf p}.
\end{equation}

Further, the irreducible representations of $G$ are all of degree 1, and they 
correspond to the ordinary Abelian characters $\chi^i(\sigma)$ of $G$. Hence
\begin{equation}\label{E22}
L(s, \chi^i) = \prod_{\mf p} \frac{1}{1-\frac{\chi^i (\sigma)}{N \mf p^s}}
\end{equation}
where $\sigma$ denotes the Frobenius element associated to $\mf p$.

{\color{blue}
\begin{remark}
Equation (\ref{E21}) follows from the Chinese remainder theory.  In (\ref{E22})  the Frobenius element $\sigma$ depends on $\mf p$. Artin makes this implicit, but a notation such as $\sigma(\mf p)$ could be used here if we wanted to make this explicit.
\end{remark}
}

Now in this situation $K$ is the class field of a certain 
class group $\{ C_1, \ldots, C_n \}$ for a certain modulus $\mf m$ (a certain ideal of $\mc O_k$)
with the property that a prime ideal~$\mf p$ of $\mc O_k$ prime to $\mf m$ splits into prime ideals of the first degree in $\mc O_K$ if and only if $\mf p$ is in~$C_1$ where~$C_1$ is the identity class (Hauptklasse).\footnote{See 
Teiji Takagi: \emph{\"Uber eine Theorie des relativ Abelschen Zahlk\"orpers}
{(concerning a theory of relative Abelian number fields)}, Journal of the College of Science, Tokyo 1920  \cite{Takagi1920}.
Further reference to Takagi will generally be from this paper.}  

{\color{blue}
\begin{remark}
We can think of  $\{ C_1, \ldots, C_n \}$ as a certain quotient group of the multiplicative group of fractional ideals whose prime factors are prime to $\mf m$.
In other words, each $C_i$ is a class of fractional ideals prime to $\mf m$.
There is a minimal ideal~$\mf m$ that we can use called the conductor, but we get well-defined version of the class group when we use multiplies of this minimal modulus. 
Replacing a modulus by a multiple gives a class group that is naturally isomorphic to the first, so we can 
often say ``the ideal class group'' associated to $K/k$ is we are not concerned about the exact modulus. However replacing a modulus with a multiple can 
reduce the set of prime ideals of $\mc O_k$ prime to the modulus, but only by a finite number.
\end{remark}
}

The identity between our new $L$-series and the usual $L$-series will be shown once we are able to prove the following:

\begin{satz}

\ 

\emph{a)} The Frobenius element $\sigma$ of $\mf p$ depends only on the ideal class $C_i$ containing~$\mf p$, (so we can 
assign a Frobenius element to each ideal class $C_i$ by choosing any prime ideal in that class as a representative).

\emph{b)} This Frobenius map gives an isomorphism between
 the ideal class group and the Galois group~$G$.
\end{satz}

{\color{blue}
\begin{remark}
Observe that if Satz 2 holds for a certain modulus $\mf m$ then it automatically holds for any multiple of $\mf m$.
So there are really two versions of Satz 2, the strong version and the weak version. The strong version  asserts the result  where the class group
is taken with any valid modulus $\mf m$, or equivalently with the conductor as the modulus. The weak version asserts the result for some modulus $\mf m$, or equivalently
asserts (a)  for ``almost all'' prime ideals, i.e. all prime ideals of $\mc O_k$ outside a certain finite subset (and where we can then let $\mf m$ be any valid modulus).

When we know that almost all prime ideals of a given ideal class $C_i$ must have the same Frobenius element, we can conclude that all ideal classes containing
 infinitely many prime ideals can be assigned a well-defined Frobenius element.
But note that every ideal class $C_i$ contains an infinite number of prime ideals $\mf p$ of $\mc O_k$ by a suitable generalization of Dirichlet's theorem concerning
primes in arithmetic progressions. Thus we can assign a Frobenius element to any class. This is the content of the first part of Satz 2. 
\end{remark}
}

This result implies that every character of the Galois group $G$ is then a character of the ideal class group and conversely.
So any $L$-series in our sense is then a $L$-series in the usual sense.
Conversely, if an ordinary $L$-series is given for an ideal class group then it will be an $L$-series for the character of the Galois group 
of the associated class field.
So Satz 2 implies that our new definition is indeed a generalization of the old definition, agreeing with the old definition in the case where $K/k$
is Abelian.

{\color{blue}
\begin{remark}
Satz 2 is called ``Artin reciprocity''. It is the culmination of classical class field theory, and will be proved
by Artin in an article~\cite{Artin1927} appearing a few years later in 1927.
When Artin wrote the current article in 1923, Teiji Takagi had already developed class field theory to a very high degree, and Artin builds on this here. 
Takagi's results give the following.
If $K/k$ is an Abelian extension of degree~$n$ then~$K$ is the class field of a class group~$\mathcal C = \{ C_1, \ldots, C_n \}$ defined with respect to a modulus $\mf m$
for some ideal~$\mf m$ of $\cO_k$. What this means is that $\{ C_1, \ldots, C_n \}$ partitions the collection of ideals, and even fractional ideals, 
of~$\cO_k$ prime to~$\mf m$.
Furthermore, the set $\mathcal C = \{ C_1, \ldots, C_n \}$ of these classes is a group where $C_i C_j$ is defined as the class containing $I_i I_j$ 
for any choice $I_i \in C_i$ and $I_j \in C_j$.
The modulus $\mf m$ is such that all prime ideals $\mf p \in C_i$ prime to $\mf m$ are unramified in $\cO_K$ in the sense that~$\mf p \cO_K$
factors into distinct prime ideals. Furthermore, for such $\mf p$ prime to $\mf m$, 
we have that~$\mf p$ is in the identity class $C_1$ if and only if $\mf p$ splits in $\cO_K$ (in the sense that~$\mf p \cO_K$ factors into $n$ distinct primes of relative degree 1). 

Another very important result of Takagi  is that $\mathcal C = \{ C_1, \ldots, C_n \}$ is isomorphic to the Galois group $G$ of $K/k$.
Interestingly, Takagi showed the isomorphism abstractly and did not supply a particular isomorphism. What Artin reciprocity does is gives a explicit
canonical isomorphism $\mathcal C \to G$.
\end{remark}
}

Satz 2 is also of interest in itself. It gives an explicit description of the isomorphism between the Galois group $G$ and the ideal class group.
In the case where $G$ is cyclic, Satz 2 is completely identical with the general reciprocity law, assuming the base field $k$ has the associated roots
of unity.  And indeed the agreement is so obvious that Satz 2 has to be interpreted as  as the general reciprocity law (even when~$k$ does not have the associated roots of unity) even if the formulation seems a bit strange (fremdartig) at first as a reciprocity law.

{\color{blue}
\begin{remark}
The general reciprocity referred here, and in the next paragraph, seems to be a version developed by Takagi mentioned in special case 5.~below. This law is less familiar today than other reciprocity laws, but
the important take-away is that Takagi's law generalizes the classical reciprocity laws. Since Artin reciprocity generalizes Takagi's reciprocity law
it automatically generalizes all the more familiar classical reciprocity laws.
\end{remark}
}

The situation is, however, 
that our provisional proof of Satz 2 only really succeeds in the cases where the general reciprocity law is accessible to us, that is for~$K$ of prime degree
over $k$ or composite fields of such extensions. For general fields we must, for the time being, just postulate Satz 2. We will do so in
future sections which will allow us to regard all purely Abelian matters as being settled.

In this section we will prove Satz 2 in the cases accessible to us. We will proceed in stepwise fashion where we give the most general results possible in
in order to make the relationships stand out more clearly.

\bigskip
1. \emph{A prime ideal $\mf p$ is in the identity class $C_1$ (the ``Hauptklasse'') if and only if the corresponding Frobenius 
element~$\sigma$ is the identity in $G$.}

{\color{blue}
\begin{remark}
Of course here we are only interested in prime ideals $\mf p$ of $\mc O_k$ prime to the modulus $\mf m$. As we will see in the proof, this result holds  for any valid modulus.
\end{remark}
}

\begin{proof}
If the Frobenius of $\mf p$ is the identity element of $G$ then $A \equiv A^{N \mf p} \pmod {\mf P}$ holds for all $A \in \cO_K$
and all primes~$\mf P$ of $\cO_K$ dividing $\mf p \cO_K$. This implies that the residue field $\cO_K/\mf P$
has $N \mf p$ elements and so the 
degree $[\cO_K/\mf P : \cO_k/\mf p]$ is $1$. Thus~$\mf p \cO_K$ factors into primes of relative degree 1, which means that~$\mf p \in C_1$ by  Takagi  Satz 31.

Conversely, if $\mf p$ is in the identity class $C_1$ then $\mf p \cO_K$ 
factors into  primes of relative degree~1. Thus $A \equiv A^{N \mf p} \pmod {\mf P}$ holds for all prime ideals~$\mf P$ dividing~$\mf p \cO_K$
and all $A \in \cO_K$. 
This means that $\sigma = 1$  works as the Frobenius element.
\end{proof}

{\color{blue}
\begin{remark}
Note that $A \equiv A^{N \mf p} \pmod {\mf P}$ holds for all $A \in \cO_K$ if and only if every element of the residue field $\cO_K/\mf P$ is a root of $x^{Np} - x$.
Lagrange's theorem on the number of roots of a polynomial of a given degree and by Fermat's little theorem, this holds in turn  if and only if $\cO_K/\mf P$ is equal to its subfield $\cO_k/\mf p$.
\end{remark}

\begin{remark}
The above, when combined with Takagi's class field theory, allows us to jump from homomorphisms to isomorphisms.
Suppose in fact that we have a homomorphism $\mathcal C \to G$ from the class group $\mathcal C$ associated to $K/k$ to the Galois group $G$ of $K/k$.
Suppose also that the class of any prime ideal $\mf p$ maps to the associated Frobenius element (perhaps even with a finite number of exceptions). Assume $C \in \mathcal C$ is a class in the kernel. Then 1.~implies that $C$ is the identity class
 (using a density result via
Weber $L$-functions). Thus $\mathcal C \to G$ is injective. From Takagi's class field theory,
$\mathcal C$ and $G$ have the same size (in fact Takagi showed they are isomorphic), thus $\mathcal C \to G$ is surjective as well.
\end{remark}

\begin{remark}
The following result is one where we have to be careful about the distinction between the strong and weak versions of~Satz~2. The proof seems
to give the following: any modulus for which Satz 2 holds for $K/k$ will also yield Satz 2  for~$\Omega / k$ where~$\Omega$ is an intermediate field.
\end{remark}
}

\medskip

2. \emph{If Satz 2 is valid for an Abelian extension $K/k$ then it is valid for $\Omega/k$ for any intermediate field~$\Omega$.}

\begin{proof}
Let $G_\Omega \subseteq G$ be the Galois group of $K/\Omega$. Let $r$ be the order of $G_\Omega$ and let~$s$ be the index
of $G_\Omega$ in $G$. As usual the quotient $G/G_\Omega$ will be identified with the Galois group of $\Omega/K$.

We  assume Satz 2 for the extension $K/k$ so there is a class group $\{C_1, \ldots, C_n\}$ relative to some modulus~$\mf m$,
and a Frobenius isomorphism~$\mc C \to G$ sending $C_i \in \mc C$ to the Frobenius element~$\sigma \in G$ associated to any prime ideal in~$C_i$.

Since $\Omega$ is an intermediate field, by Takagi's results $\Omega$ is the class field for a class group $\mc H = \{ H_1, \ldots, H_s\}$, and moreover $\mc H$ can
be chosen to come from a quotient group of~$\mc C$. In other words, we can use the same modulus $\mf m$ for $\mc H$ as for~$\mc C$, and we can
write the identity class (Hauptklasse) of $\mc H$ as the union of classes of~$\mc C$
 $$
 H_1 = C_1 \cup C_2 \cup \dots \cup C_r
 $$
(where we reindex the elements of $\mc C$ as necessary).\footnote{\label{fn9}\color{blue}
Let $C_1$ be the identity class (die Hauptklasse) of $\mc C$
and let $\mc I_{\mf m}$ be the full group of fractional ideals of $\mc O_k$ relatively prime to the modulus $\mf m$. 
Then 
there is a principle of class field theory similar to what we find in Galois theory: the intermediate fields of $K/k$ are in bijective correspondence
with subgroups of $\mc I_\mf m$ containing $C_1$. This correspondence reverses inclusion.
Given such a subgroup $H_1$ of fractional ideals, the Galois group of the corresponding intermediate extension~$\Omega/k$ is isomorphic to $\mc I_m / H_1$, and the cosets
of $H_1$ in $\mc I_m$ give the class group associated to $\Omega / k$.
Note also that the prime ideals of $H_1$ are exactly the prime ideals in $\mc I_\mf m$ that split in $\Omega$. 
}
By 1.~(above) if $\mf p$ is a prime ideal of $\mc O_k$ not dividing $\mf m$ then the Frobenius element of $\mf p$ in $G/G_\Omega$ (relative to the extension $\Omega/k$)
is equal to the identity coset $G_\Omega \in G / G_\Omega$ if and only if $\mf p \in H_1$.
So by the compatibility of the Frobenius for $K/k$ compared to $\Omega/k$ we have that the Frobenius element of $C_i$ in $G$ is in $G_\Omega$ 
if and only if $C_i \subseteq H_i$.\footnote{\color{blue} See Lemma~\ref{triple_lemma}.}

We now show that all primes in a given class $H_i$ have the same Frobenius element in $G/G_\Omega$. Since $\mc H$ comes from a quotient group of $\mc C$, we can
write $H_i$ as $C'_i H_1$ for some~$C'_i \in \mc C$.  So if $\mf p \in H_i$ is a prime ideal we have $\mf p \in C'_i C_j$ for some $1\le j \le r$ (by the decomposition of $H_1$).
By Satz 1 for $K/k$ we have that $\mf p$ has Frobenius element~$\sigma_i \tau_j \in G$ where $\sigma_i \in G$ is the Frobenius element of the class $C'_i$
and $\tau_j \in G_\Omega$ is the Frobenius element of $C_j$ (recall $C_j \subseteq H_1$ so $\tau_j \in G_\Omega$). Observe that $\sigma_i \tau_j$ is in the coset $\sigma_i G_\Omega$,
and so by the compatibility of the Frobenius elements for $K/k$ compared to $\Omega/k$ we have that the Frobenius element of $\mf p$ in $G/G_\Omega$ is $\sigma_i G_\Omega$.
Thus all primes $\mf p$ in $H_i$ have the same Frobenius element in $G/G_\Omega$. This proves the first part of Satz 1 for $\Omega/k$.

Now we have a well-defined Frobenius function $\mc H \to G/G_\Omega$, and we must show it is a homomorphism.  This follows from the fact
that the following commutes, where the horizontal maps are the Frobenius maps and the vertical maps are the natural quotient maps:
 $$
 \begin{tikzcd} [column sep = normal, row sep = large]
\mc C \arrow[rr] \arrow[d]
& &  G \arrow[d]
\\
\mc H
\arrow[rr]
& & G/G_\Omega
\end{tikzcd}
$$
Since the vertical map $\mc C \to \mc H$ is surjective, and since the top three maps are homomorphisms, the bottom map must also be a homomorphism.

This Frobenius map $\mc H \to G/G_\Omega$ is surjective since if $\sigma G_\Omega$ is in $G/G_\Omega$,
 then~$\sigma$ is the Frobenius in~$G$ for some $\mf p$, which means~$\sigma G_\Omega$ is the corresponding Frobenius in~$G/G_\Omega$.
 Since~$\mc H$ and $G/G_\Omega$ have the same order, the map is in fact an isomorphism.
\end{proof}

{\color{blue}
\begin{remark}
Artin's proof original is a bit terse, so I expanded it a bit in my translation (and even snuck in a commutative diagram not in the original).
I will add extra explanatory details to other proofs as we proceed.
 One
thing Artin did not need to do, however, is to argue that the map $\mc H \to G/G_\Omega$ is surjective since as pointed in a remark after  result 1.~above, we know such a Frobenius map
must be an isomorphism once we know it is a homomorphism. Alternatively, we can see surjectivity right away from the commutative diagram and the fact that the top and right maps are obviously surjective.
\end{remark}

\begin{remark}
In several places in the above proof we used the the compatibility of the Frobenius for $K/k$ compared to $\Omega/k$. 
This was addressed at the end of Section~2 (and is summarized in Lemma~\ref{triple_lemma} in the commentary).
Note that this compatibility is what justifies the commutative diagram that I inserted into the above proof.
\end{remark}

\begin{remark}
The next result can also be regarded as a justification for either the strong or the weak versions of Satz 2. In other words, if the strong version of Satz 2
holds for $K_1$ and $K_2$ then the proof yields the strong version for $K_1 K_2$. If, however, only the weak version of Satz 2 holds for $K_1$ and $K_2$
then the proof can be regarded as a proof for the weak version of $K_1 K_2$. This is based on the observation
that any modulus valid for an Abelian extension is valid for any subextension.
\end{remark}

}

3. \emph{Suppose Satz 2 holds for two Abelian extension  $K_1$ and $K_2$  of $k$ whose intersection is $k$, then it holds for the composite field $K = K_1 K_2$.
.}

\begin{proof} 
Let $\mf m$ be common modulus such that Satz 2 holds for $K_1$ and $K_2$ with modulus~$\mf m$,
and let $C_1, \dots, C_n; D_1, \dots, D_m$ be classes taken for the modulus $\mf m$ where the $C_i$ form the class group for $K_1$
and $D_i$ form the class group for $K_2$.
Let~$G_1$ be the Galois group of $K_1/k$ and $G_2$ be the Galois group of~$K_2/k$.
Suppose that $C_i$ has Frobenius~$\sigma_i \in G_1$ and $D_j$ has Frobenius~$\tau_j \in G_2$.

As we know from Galois theory,  the Galois group of $K_1 K_2 / k$ can be identified with $G_1 \times G_2$.
Note that $K_1 K_2$ is the class field associated to the class group
described by the partition of ideals prime to $\mf m$ given by the intersections $C_r \cap D_s$.\footnote{\color{blue}
This is not too difficult to show. In fact we 
can appeal the the principle of  footnote~\ref{fn9}. Let $\mc I_\mf m$ be the group of fractional ideals of $\mc O_k$ prime to $\mf m$, and let
$\mc P_\mf m$ be the subgroup of principal ideals with totally positive generators congruent to 1 modulo $\mf m$. 
Then $\mc P_\mf m$ corresponds to the ray class
field $L_\mf m$ that clearly contains $K_1 K_2$ since it contains both $K_1$ and $K_2$.
Under the correspondence between subfields of $L_\mf m$ containing $k$ and subgroups of $\mc I_\mf m$ containing $\mc P_\mf m$,
 the group $C_1$ corresponds to $K_1$ and $D_1$ corresponds to $K_2$. 
 So $C_1 \cap D_1$ corresponds to the
smallest subfield of $L_\mf m$ containing both $K_1$ and $K_2$, which is just $K_1 K_2$. The cosets of $C_1 \cap D_1$ in $\mc I_\mf m$ can be seen to be
 the sets $C_r \cap D_s$
as desired. To see this consider the injective homomorphism~$\mc I_\mf m/(C_1 \cap D_1) \to \mc  I_\mf m / C_1 \times\mc I_\mf m/ D_1$ which must be an isomorphism since
$[K_1 K_2 : k] = [K_1 : k ] [K_2 : k]$ (or equivalently, since $C_1 D_1$ corresponds to $k$, the smallest common subfield of $K_1$ and $K_2$, and so must be all of $\mc I_\mf m$).
}
 The product of classes for this class group 
is described by the following equation:
$$
(C_r \cap D_s) (C_u \cap D_v) = C_r C_u \cap D_s D_v.
$$

Now let $A_1 \in \mc O_{K_1}$ and $A_2 \in \mc O_{K_2}$ be generators fo $K_1/k$ and $K_2/k$ respectively. Let $A = \varphi(A_1, A_2)$
be in $\mc O_{K_1 K_2}$. Let $\mf p$ be in $C_r \cap D_s$. Then
$$
A^{N\mf p} \equiv \varphi( A_1^{N\mf p}, A_2^{N\mf p}) \equiv \varphi( \sigma_r A_1, \tau_s A_2) \equiv (\sigma_r, \tau_s) A \pmod \mf p.
$$

Suppose $A$ is an integral  element of $K_1 K_2$ of the form $A_1 A_2$ with $A_1 \in \cO_{K_1}$ and~$A_2 \in \cO_{K_2}$.
If $\mf p$ is a prime ideal in $C_r \cap D_s$ then
$$
A^{N\mf p} = A_1^{N\mf p} A_2^{N\mf p} \equiv (\sigma_r A_1)(\tau_s A_2) = (\sigma_r, \tau_s) A \pmod{\mf p}
$$
Thus the Frobenius of any prime ideal in $C_r \cap D_s$ is $(\sigma_r, \tau_s)$, which is independent of the choice of $\mf p$. So the first part of Satz 2 holds.
The second part follows as well based on what we have shown.
\end{proof}

{\color{blue}
\begin{remark}
In the above proof, Artin does not describe explicity what $\varphi(x, y)$ is, but from context it seems to be a polynomial in $k[x, y]$. Furthermore, 
to support the congruences, the coefficients should
be expressible as fractions of integral elements with denominators not in $\mf p$. Artin does not address the existence of such a polynomial. Fortunately, there
is straightforward way to prove the result that does not rely such a polynomial $\varphi(x, y)$:

As in the above proof, let $\mf p$ be a prime ideal of $C_r \cap D_s$ with Frobenius element~$(\sigma, \tau) \in G_1 \times G_2$.
By Lemma~\ref{triple_lemma}, and thinking of the Galois group of $K_1/k$ as the quotient~$G_1 \times G_2 / G_2$ (with $G_2$ embedded in $G_1 \times G_2$
in the usual way) then the Frobenius element of $\mf p$ for the extension $K_1/k$ is the coset 
$$
(\sigma, \tau) G_2 = (\sigma, 1) G_2.
$$
Under the identification of $G_1 \times G_2 / G_2$ with $G_1$, which identifies the two descriptions of the Galois group of $K_1/k$,
this element is $\sigma$. Thus $\sigma = \sigma_r$ since $\sigma_r$ is the Frobenius element of $\mf p$ for $K_1/k$.
Similarly, $\tau = \tau_s$. Thus the Frobenius of $\mf p$ is~$(\sigma_r, \tau_t)$ as claimed.
\end{remark}
}

Note that because of 3.~(and the structure theorem of finite Abelian groups) we can reduce the proof of Satz 2 to cyclic extensions of 
prime power degree. However, in this paper we will only fully succeed in proving Satz 2 in the case of cyclic extensions of prime degree.

\bigskip

4. \emph{Satz 2 holds for $K = k(\zeta)$ where $\zeta = e^{\frac{2\pi i}{m}}$ is an $m$th root of unity.}\footnote{An analogous proof
can be produced for class fields  of complex multiplication. This shows how the reciprocity laws can be obtained through transcendental 
generators of the class fields.}

\begin{proof} 
Let $\mc C = \{C_1, \ldots, C_n\}$ be a class group associated to the field extension~$K/k$  where, as usual, $C_1$ is the identity class (die Hauptklasse).
For now we allow any  modulus $\mf m$ for $\mc C$, valid for $K/k$, that at least satisfies the following condition:  every prime ideal dividing $m\mc O_k$ also divides $\mf m$ (we will later show that $\mf m = m \mc O_k$ is in fact valid).
The first step is to identify the prime ideals in $C_1$ by determining a splitting law. In other words, we wish to describe which prime ideals $\mf p$ of $\mc O_k$
prime to $\mf m$ have
the property that $\mf p \mc O_K$ factors into distinct primes of relative degree one.

Given such a prime ideal $\mf p$, we know that  $\mf p$ is unramified in $K/k$ and that the distinct $m$th-roots of unity in $\cO_K$ map to distinct 
$m$th roots of unity in the residue field~$\cO_K/\mf P$
for any prime $\mf P$ above~$\mf p$. So $\cO_K/\mf P$ contains all the $m$th roots of unity.
Since $\mf p$ splits in~$\cO_K$, the residue field $\cO_k/\mf p$ 
is isomorphic to~$\cO_K/\mf P$ and so itself contains  all the 
$m$th root of unity. In this case the order~$N \mf p - 1$ of the multiplicative group $(\cO_k/\mf p)^\times$ is divisible by $m$. In other words,~$N \mf p \equiv 1 \pmod m$.

Conversely, suppose $N \mf p\equiv 1 \pmod m$ where $\mf p$ is a prime ideal of $\cO_k$ not dividing~$\mf m$.
Then for each algebraic integer
$A = \alpha_0 + \alpha_1 \zeta + \dots $ in $\cO_K$ (with $\alpha_i \in \cO_k$)
$$
A^{N \mf p} \equiv A \pmod{\mf p}. 
$$
So the residue field~$\cO_K/\mf P$ has size bounded by $N\mf p$, and so equal to $N \mf p$, for all 
primes $\mf P$ above~$\mf p$. Thus $\mf p$ splits in~$\cO_K$.

We have now established our desired splitting law: for prime ideals $\mf p$ of $\cO_k$ prime to $\mf m$, then $\mf p$ splits if and only if $N\mf p \equiv 1 \pmod m$.
So by a fundamental result of class field theory, for prime ideals $\mf p$ of $\cO_k$ prime to $\mf m$, we have $\mf p \in C_1$ if and only if $N\mf p \equiv 1 \pmod m$.

We wish to extend this to showing that $C_1$ consists the of the fractional ideals~$\mf a$ prime to $\mf m$ such that $N \mf a \equiv 1 \pmod m$,
and in fact that all fractional ideals in a given class~$C_i$ have the same norm modulo $m$. It turns out that we can do this by showing that $K$ is contained
in the ray class field of $k$ for modulus $m\mc O_k$, which will allow us to choose $\mf m$ to be~$m\mc O_k$.
So let  $\mc C_m$ be the ray class group of $k$ modulo~$m$.\footnote{\color{blue} 
The ray class group modulo $\mf m$ can be defined as $\mc I_\mf m / \mathcal P_\mf m$ where $\mc I_\mf m$ is the group of fractional ideals prime to $\mf m$
and $\mc P_\mf m$ is the subgroup of principal ideals generated by elements~$\alpha \in k$ such that~$\alpha \equiv 1 \pmod{\mf m}$
and such that $\alpha$ is  positive in all real embeddings of~$k$. It is a basic result that every class of the ray class group contains integral ideals, and in fact prime ideals (by a generalization of Dirichlet's theorem).

The condition
$\alpha \equiv 1 \pmod{\mf m}$ can be interpreted as saying that $\alpha$ is the quotient $\beta/\gamma$ of algebraic integers such
that $\beta$ and $\gamma$ are prime to $\mf m$ and such that $\beta \equiv \gamma \pmod{\mf m}$.
In the current proof we are concerned with the ideal $\mf m = m \mc O_k$, and so we have $\sigma \beta \equiv \sigma \gamma \pmod{m}$
for all $\sigma$ in the Galois group of $K/k$. In particular, 
 $N \beta \equiv N \gamma \pmod {m}$, which we can express as saying that~$N\alpha \equiv 1 \pmod {m}$. 
This is the norm of $\alpha$
as an element of $\bQ$; the norm of the associated principal fractional ideal is the absolute value of the norm of its generator $\alpha$.
Since we assume that~$\alpha$ is positive in all real embeddings of $k$ in~$\bR$, its norm is positive, and so we get that~$N(\alpha\mc O_k) \equiv 1 \pmod m$
where here we mean the norm of the associated principal fractional ideal.}

Suppose $\mf a$ and $\mf b$ are ideals of~$\cO_k$ in the same class in~$\mathcal C_m$.
Then~$\mf a = \alpha \mf b$ for some~$\alpha \in k$ positive in all embeddings of $k$ into~$\bR$ and
such that~$\alpha\equiv  1  \pmod m$. For such $\alpha$ we have
$$
N (\alpha \mc O_k) 
= |N\alpha | = N \alpha  \equiv 1 \pmod m,
$$
so
$$
N \mf a \equiv N \alpha N \mf b \equiv N \mf b \pmod m.
$$
So all the ideals in a given class of $\mc C_m$ have the same norm, and we have a homomorphism~$\mathcal C_m \to (\bZ/m\bZ)^\times$.
Combining classes of norm 1 yields a subgroup $\mc K_m$ of~$\mc C_m$ (the kernel of this norm homomorphism), and 
the quotient  $\mathcal C_m / \mc K_m$ determines a class group with the property that two fractional ideals (prime to $m$) are in the same class if and only if they 
have the same norm. 

In particular, $\mc C$ and $\mc C_m / \mc K_m$  both have the property that (with at most finitely many exceptions) a prime ideal $\mf p$ is in the identity class if
and only if $N \mf p= 1$. According to class field theory this means that the class fields of $\mc C$ and $\mathcal C_m / \mc K_m$ have the same primes that split (with a finite number of possible exceptions), 
and so must be equal.  
So $K$ is the class field of the class group $\mc C_m/\mc K_m$, where this class group is taken to have modulus $m \mc O_k$. We can now fix $\mf m$ to be $m \mc O_k$,
and identify~$\mc C$ with~$\mc C_m/\mc K_m$.
In particular all fractional ideals of a given class $C_i$ have the same norm modulo $m$.

Since $K = k(\zeta)$, we can view the Galois group $G$ of $K/k$ to be a subgroup of~$(\bZ/m\bZ)^\times$ where $\sigma$ is identified with the integer $t$ modulo $m$
for which $\sigma \zeta = \zeta^t$.

Let $C_i$ be a class of $\mc C$, and assume that the fractional ideals of $C_i$ have norm congruent to $n_i$ modulo~$m$. Let $\sigma \in G$ be the Frobenius element
of some prime $\mf p$ of $C_i$.
Since~$\sigma A \equiv A^{N\mf p} \pmod {\mf P}$
for all $A\in \cO_K$ and all primes $\mf P$ in $\mc O_K$ above $\mf p$, we have in
particular that
$$\sigma \zeta \equiv \zeta^{N\mf p} \equiv \zeta^{n_i} \pmod {\mf P}.$$ 
However $\sigma \zeta = \zeta^t$ for some integer~$t$.
So~$\zeta^{n_i} \equiv \zeta^t \pmod {\mf P}$. As mentioned above, distinct $m$-roots of unity in $\cO_K$ map to distinct $m$th roots of unity in the residue field~$\cO_K/\mf P$.
We conclude that $\zeta^{n_i} = \zeta^t$, and so, identifying $G$ with a subgroup of~$(\bZ/m\bZ)^\times$ ,
we see that the Frobenius element is just $n_i \in (\bZ/m\bZ)^\times$.
In particular all primes of $C_i$ share the same Frobenius element, proving the first part of Satz 2. 

By the multiplicativity of the norm map, the Frobenius map is a homomorphism. Observe that the Frobenius map has kernel consisting of the class $C_1$ alone
since only ideals in $C_1$ have norm conguent to 1 modulo $m$. Since $\mathcal C$ and $G$ have the same number of elements (according to Takagi's theory), the induced map $\mathcal C \to G$ is an isomorphism.
\end{proof}

{\color{blue}

\begin{remark}
As mentioned above, it is not really necessary to prove the map is an isomorphism since it being a homomorphism is enough. (See remark after claim~1.).
\end{remark}

\begin{remark}
This gives Satz 2 specifically for modulus $m \mc O_k$.
\end{remark}

\begin{remark}
At this point we know that at least a weak form of Satz 2 holds when~$k= \bQ$ (using 2.,~4.~and the Kronecker-Weber theorem that every finite Abelian extension of~$\bQ$
is a subfield of~$\bQ(\zeta)$ for suitable $\zeta$).
\end{remark}

\begin{remark}
As with other proofs in this translation, the above proof is much expanded and somewhat modified from Artin's original proof in order to make
the argument more accessible to the modern reader. Here we provide more commentary for the proof.
Let $\mf p$ be prime to $\mf m$.
We can use the factorization of the polynomial $x^m - 1$ in $\cO_K [x]$ into linear polynomials and its reduction modulo $\mf p$ to get a factorization into linear
polynomials
$(\cO_K/\mf P) [x]$. Since the deriviative $m x^{m-1}$ is relatively prime to $x^m-1$, the roots in $(\cO_K/\mf P) [x]$ must be distinct
(we know that $m$ is not in $\mf P$ by our assumption on $\mf m$).
This explains  why distinct 
$m$th roots of unity map to distinct roots of unity in the residue field~$\cO_K/\mf P$.

We also used the fact that $(A_1 + A_2)^{N(\mf p)} \equiv A_1^{N(\mf p)} + A_2^{N(\mf p) }\pmod {\mf p}$ for all~$A_1, A_2 \in \mc O_K$. This follows from the fact that $N(\mf p)$
is a power of the characteristic $p$ of $\mc O_k/\mf p$.
\end{remark}
}

\bigskip

{\color{blue} The next  result gives Satz 2 for a class of Kummer extensions:}

\medskip

5. \emph{Suppose $k$ contains the  root of unity $\zeta = e^{\frac{2\pi i}{m}}$ where $m = l^n$ is a power of a prime $l$. Then Satz 2 holds for all
cyclic extensions $K$ of $k$ of degree $m = l^n$.}

\begin{proof} 
It is a standard result of Galois theory\footnote{\color{blue}
See for instance Aluffi~\cite{Aluffi2009}, Chapter VII, Proposition 6.19. In fact, this result is so central to Galois theory that
it was essentially stated by Galois himself in the case that $m$ is prime, but
Galois' argument has a  gap (essentially he fails to show $\mu \ne 0$).
See Edwards~\cite{Edwards1984}, \S 46, Page 63 for a discussion of the gap in Galois' manuscript and a simple way to fix it in a manner that would have been
accessible to Galois himself.}
that any such extension $K$ is of the form~$k\left( \mu^{1/m} \right)$ for some $\mu \in k$ and some fixed choice $\mu^{1/m}$ of $m$th root.
Observe also that the Galois group $G$ can be identified with the group of $m$th roots of unity: the action of $\sigma \in G$ is determined
by  image of $\mu^{1/m}$ which must be of the form $c(\sigma) \mu^{1/m}$ for some $m$th root of unity~$c(\sigma)$. 
The map~$\sigma \mapsto c(\sigma)$
is  our desired isomorphism of~$G$ with the group of $m$th roots of unity.
For convenience we can take $\mu$ to be in~$\mc O_k$ so that $\mu^{1/m} \in \mc O_K$.

Suppose $\mf p$ is a prime ideal of $\cO_k$ prime to $l$. Since $\zeta \in \cO_k$, the
residue field~$\cO_k/\mf p$ has a primitive $m$th root of unity.
In other words $m$ divides~$N \mf p -1$, the order of the multiplicative group of the residue field~$\cO_k/\mf p$.
In other words, $N \mf p \equiv 1 \pmod {m}$.

Therefore,
$$
\left( \mu^{{1}/{m}} \right)^{N\mf p}
\equiv
\mu^{{(N\mf p - 1)}/{m}}  \mu^{{1}/{m}}
\equiv
\left( \frac{\mu}{\mf p}\right) \mu^{{1}/{m}}  \pmod {\mf p}
$$
where $\displaystyle \left( \frac{\mu}{\mf p}\right)$ is the $m$th power character, whose values are $m$th roots of unity.\footnote{\color{blue} 
See for instance Lemmermeyer~\cite{Lemmermeyer2000}, Section 4.1, or Ireland and Rosen~\cite{IrelandR1990}, Section 14.2.}
In particular, the Frobenius element for $\mf p$ is the element of $G$ identified with the $m$th root of unity $\displaystyle \left( \frac{\mu}{\mf p}\right)$.

The essential statement of the general reciprocity law, as given by Takagi, is  exactly that 
$\displaystyle \left( \frac{\mu}{\mf p}\right)$ only depends on the class containing~$\mf p$
(in fact, this holds for any ideal $\mf a$ prime to~$\mu$).\footnote{Takagi~\cite{Takagi1922}.
} 
So let $\mc C$ be a class group for $K/k$ with modulus $\mf m$ (containing~$\mu$ and $l$, say) for which we are certain that 
$\displaystyle \left( \frac{\mu}{\mf a}\right)$ depends only on the class of $\mf a$  in~$\mc C$ for all integral ideals relatively prime to~$\mf m$.\footnote{\color{blue}
Artin's original proof does not specify what modulus $\mf m$ will work here. Perhaps it is $l \mu \mc O_k$ or $m \mu \mc O_k$. In any case, it should be clear by looking at Takagi's paper \cite{Takagi1922}.
Until I have the opportunity to consult Takagi's paper, I will just use any modulus that gets the job done here.

In fact, Artin does not mention the modulus at all in the proof. He also does not specify what~$l$ is, but it is pretty clear from context that $l$ must
at least be a prime. It could be that $l$ is restricted to odd primes. Again, it might require digging into Takagi's paper.
}
It follows now that the first part of Satz 2 holds for such a modulus $\mf m$.
The multiplicativity of the power character implies that the Frobenius map is a homomorphism. From this we get the rest of Satz 2.\footnote{\color{blue}
We get at least the weak form of Satz 2.  Artin also mentions that $\displaystyle \left( \frac{\mu}{\mf p}\right)$ can take on any $m$th root of unity as a value and uses
this to justify surjectivity of the Frobenius map, but as mentioned after result 1. above this follows already from what we have done.
}
\end{proof}

{\color{blue}

\begin{remark}
The above proof is perhaps the most challenging for the modern reader to verify since it relies on results of Takagi that Artin does not 
spell out in detail (nor do the modern sources I have consulted). The power character is well-known though and is easy to define.
Following Section 4.1 of \cite{Lemmermeyer2000},
let $k$ be a number field containing all the $m$th roots of unity where $m$ is a positive integer.
Recall that the reduction mod $\mf p$ map
sends distinct $m$th roots of unity to distinct $m$th roots of unity when $\mf p$ is prime to $m$. By Fermat's little theorem we have
$$
\alpha^{N\mf p - 1} \equiv 1 \pmod {\mf p}
$$
for all $\alpha \in \mc O_k$ outside of $\mf p$, and so
$
\alpha^{(N\mf p - 1)/m}
$
reduces to an $m$th root of unity in the residue field $\mc O_k/ \mf p$.
The power character $\left( \frac{\alpha}{\mf p} \right)_m$ for such an $\alpha$ and $\mf p$ is defined to be the unique $m$ root of unity such that
$$
\left( \frac{\alpha}{\mf p} \right)_m \equiv \alpha^{(N\mf p -1)/m} \pmod {\mf p}.
$$
This can be extended from $\mf p$ to other ideals 
prime to $m$ by defining the symbol to be multiplicative with respect to ideal multiplication.
We can even define  $\left( \frac{\alpha}{\beta} \right)_m$ 
for relatively prime elements nonzero elements $\alpha, \beta \in \mc O_k$, with $\beta$ prime to $m$, by
 considering
the ideal generated by $\beta$.

The power character is central to the study of various reciprocity laws. For example, 
the Eisenstein reciprocity law (\cite{IrelandR1990}, Section 14.2, or \cite{Lemmermeyer2000} Section 11.2) can be elegantly expressed using the power character for the field $k = \bQ(\zeta_l)$:

\begin{thm} [Eisenstein reciprocity]
Suppose $l$ is an odd prime, suppose $\zeta_l$ is a primitive $l$th root of unity, and
suppose $a \in \bZ$ is not divisible by $l$.
If  $\alpha \in \bZ[\zeta_l]$ is relatively prime to $a$, and if $\alpha$ is a primary element (meaning that $\alpha$ is not a unit, is prime to $l$, and is congruent to an element of $\bZ$ modulo~$(1-\zeta_l)^2$)
then
$$
\left( \frac{\alpha}{a} \right)_l  = \left( \frac{a}{\alpha} \right)_l .
$$
 \end{thm}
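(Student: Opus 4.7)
The plan is to follow the classical Gauss-sum approach. First, using the multiplicativity of both symbols $\left(\frac{\alpha}{a}\right)_l$ and $\left(\frac{a}{\alpha}\right)_l$ in each argument (in the second argument with respect to ideal multiplication, in the first with respect to element multiplication of coprime factors), I would reduce to the case where $a = q$ is a rational prime different from $l$ and $\alpha = \pi$ is a prime of $\bZ[\zeta_l]$ lying above a rational prime $p \notin \{l, q\}$. The reduction requires verifying that the primary condition is compatible with factoring, which one handles by noting that $\alpha$ being primary forces an essentially unique factorization into primary primes (times a unit of $\bZ[\zeta_l]$ that is itself primary, i.e., congruent to a rational integer modulo $(1-\zeta_l)^2$).

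Next, for the reduced case, I would introduce the $l$th-power residue character $\chi_\pi\colon (\bZ[\zeta_l]/\pi)^\times \to \mu_l$ defined by $\chi_\pi(t) \equiv t^{(N\pi - 1)/l} \pmod{\pi}$, together with the Gauss sum
\[
g(\chi_\pi) = \sum_{t \in (\bZ[\zeta_l]/\pi)^\times} \chi_\pi(t)\,\zeta_p^{\mathrm{Tr}(t)}.
\]
Computing $g(\chi_\pi)^l$ via the Stickelberger factorization expresses it as $\pi$ times a product of Jacobi sums $J(\chi_\pi^a, \chi_\pi^b)$, all lying in $\bZ[\zeta_l]$. Applying the Frobenius at $q$ to this identity in the extension $\bQ(\zeta_l, \zeta_p)$ and reducing modulo a prime above $q$ produces the symbol $\left(\frac{\pi}{q}\right)_l$ on one side (from the factor of $\pi$) and, on the other side, the symbol $\left(\frac{q}{\pi}\right)_l$ (from applying Frobenius directly to $g(\chi_\pi)^l$ and using $\sigma_q g(\chi_\pi) \equiv \left(\frac{q}{\pi}\right)_l^{-1} g(\chi_\pi^q) \pmod{q}$).

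The reciprocity then reduces to showing that the Jacobi-sum factor is fixed modulo $q$ by Frobenius, i.e.\ that the unit relating $\sigma_q J$ to $J$ is trivial. This is precisely where Eisenstein's congruence for primary elements enters: one shows that for $\pi$ primary, the relevant Jacobi-sum element is congruent to a rational integer modulo $(1-\zeta_l)^2$, forcing the Frobenius twist to be $1$. The main obstacle is this final step — the delicate bookkeeping with Jacobi sums and the Stickelberger ideal, together with a careful application of Eisenstein's lemma (the congruence $\alpha^q \equiv \alpha \pmod{q(1-\zeta_l)}$ for primary $\alpha$), which is what ultimately pins down why the reciprocity fails without the primary hypothesis and succeeds with it.
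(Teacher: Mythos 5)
First, a point of reference: the paper does not prove this theorem. It appears only inside a commentary remark as a quoted classical result, with the proof deferred to the cited references (Ireland--Rosen \S 14.2, Lemmermeyer \S 11.2). Your outline is essentially the proof found in those references --- Gauss sums, the Stickelberger relation, and the action of the Frobenius at $q$ --- so the overall strategy is the standard and correct one.

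There is, however, a genuine gap in your reduction step. You reduce to $\alpha = \pi$ a prime \emph{element} by asserting that a primary $\alpha$ admits an ``essentially unique factorization into primary primes'' times a primary unit. This fails in general: $\bZ[\zeta_l]$ is not a principal ideal domain once $l \ge 23$ (the class number of $\bQ(\zeta_{23})$ is $3$), so the ideal $(\alpha)$ factors into prime ideals that need not be principal, and $\alpha$ need not factor into prime elements at all. Even in the PID case you must separately dispose of the primary unit produced by the factorization, which requires its own supplementary argument. The careful proofs circumvent this precisely by never factoring $\alpha$ into elements: to each prime ideal $\mf p$ dividing $(\alpha)$ they attach the element $g(\mf p)^l$, show via Stickelberger that it is a \emph{primary generator} of the principal ideal $\mf p^{l\theta}$ (with $\theta$ the Stickelberger element), and then compare the product of these over the prime divisors of $(\alpha)$ with $\alpha^{l\theta}$, while the symbol $\left( \frac{a}{\alpha} \right)_l$ is evaluated prime-ideal by prime-ideal directly from its definition. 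So the ideal-theoretic bookkeeping is not an optional refinement of your plan; it is where the argument actually lives. Your final step --- the congruence for primary elements forcing the Frobenius twist of the Jacobi-sum factor to be trivial --- is correctly identified as the crux, but as written it is a pointer to the work rather than the work itself.
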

 
 According to~\cite{Lemmermeyer2000} (Section 11.4), 
 Takagi~\cite{Takagi1922} generalized this result from~$\bQ (\zeta_l)$ to any number field containing~$\zeta_l$.
Apparently Takagi connected such reciprocity laws with his class field theory, which then opened the door to the above result of Artin and to Artin's general reciprocity law.
\end{remark}

\begin{remark}
Because of the close connection between the Frobenius element and the power character illustrated in the above proof, it is common to introduce  reciprocity-like symbols
for the Frobenius. The expression
$$
\left( \frac{K/k} {\mf p} \right)
$$
denotes the Frobenius element associated to $\mf p$. As usual, here $K/k$ is an Abelian extension of number fields, and $\mf p$ is a prime ideal of $\mc O_k$
unramified in $K/k$. This symbol is called the  \emph{Artin symbol} in honor of the ideas introduced in this paper.  When $K/k$ is Galois but not Abelian,
the Frobenius element depends on the choice of prime above $\mf p$, and this leads to the \emph{Frobenius symbol} (introduced by Hasse)
$$
\left[ \frac{K/k} {\mf P} \right]
$$
where $\mf P$ is a prime ideal of $\mc O_K$ unramifield in $K/k$
(See Section 3.2 of \cite{Lemmermeyer2000}).
\end{remark}

}

6. \emph{Suppose $K = k(\alpha)$ is cyclic of degree $r = l^n$ over $k$ where $l$ is a prime, and suppose $\Omega = k(\zeta)$ is an extension of $k$ of degree $m$ 
where $\zeta = e^{{2\pi i}/{l}}$. So $m$ divides~$l-1$ and is necessarily prime to $l$.
If Satz 2 holds for $K^* = \Omega(\alpha)$ over~$\Omega$ then Satz 2  holds for $K$ over $k$.}

{\color{blue}
\begin{remark}
This claim can be generalized, with essentially the same proof, to the following:

\emph{Suppose $K/k$ is an an Abelian extension of degree $r$, and suppose $\Omega/k$ is an Abelian extension of degree
$m$ where $m$ and $r$ are relatively prime. Let $K^* = K \Omega$ be the composite field. If Satz 2
holds for $K^*/\Omega$ then Satz~2 holds for $K/k$ as well.}

In the following translation, Artin's original argument has been adapted to support this more general statement.
\end{remark}
}

\begin{proof} 
We write our Galois groups as $G(K/k), G(K^*/K), G(\Omega/k), G(K^*/\Omega),$ and~$G(K^*/k)$, where $K^*$ is the composite field $K \Omega$.
Since $r$ and $l$ are relatively prime, the intersection of $K$ and $\Omega$ is $k$, and $G(K^*/k)$ can be identified with 
$$
G(K/k) \times G(\Omega/k)
$$
using the usual isomorphisms from Galois theory.
This identification also allows us to identify $G(K/k)$ with $G(K^*/\Omega)$, and $G(\Omega/k)$ with~$G(K^*/K)$. We will write $G$ for both
$G(K/k)$ and $G(K^*/\Omega)$, and we will write $H$ for both $G(\Omega/k)$ with~$G(K^*/K)$. Thus, for example, if $\sigma \in G$
then $\sigma$ can be regarded as an automorphism of $K$ fixing $k$, or as the unique extension of this automorphism to 
an automorphism $K^*$ that fixes $\Omega$.

Fix an ideal $\mf m$ of $\cO_k$ that gives a valid modulus for the class group of~$K^*/k$.
So~$\mf m$ is also a valid modulus for the subextensions $K/k$ and $\Omega/k$. Let~$\cC(K^*/k)$,~$\cC (K/k)$, and~$\cC(\Omega/k)$ be the respective class
groups all using modulus~$\mf m$. By replacing $\mf m$ by a multiple if necessary we can also choose~$\mf m$ so that Satz 2 holds for $K/\Omega$ with modulus $\mf m \mc O_\Omega$

\textbf{Step 1.}
The first step of the proof is to construct a class group $\cC(K^*/\Omega)$ with modulus~$\mf m \cO_\Omega$ together with
an explicit isomorphism  $\cC(K^*/\Omega) \to \cC(K/k)$.
We begin by considering the relative norm map $\mathcal I_\Omega \to \mathcal I_k$ where $\mathcal I_\Omega$ is the group of fractional ideals 
of $\Omega$ prime to $\mf m \cO_\Omega$
and where $\mathcal I_k$ is  the group of fractional ideals of $k$ prime to $\mf m$.
Note that $\cC(K/k)$ can be described as a quotient group of $\mathcal I_k$ and so the composition
$$
\mathcal I_\Omega \to \mathcal I_k \to \cC(K/k)
$$
is a homomorphism. Let $\mf C_0$ be the kernel of this composition.
Observe that \text{if~$\beta \in \Omega$} is prime to $\mf m\mc O_\Omega$ and satisfies $\beta \equiv 1 \pmod {\mf m\mc O_\Omega}$
then the relative norm $N\beta$ in~$k$ must satisfy the congruence~$N \beta \equiv 1 \pmod{\mf m}$ (since $\mf m$ is the intersection
of $\mf m \cO_\Omega$ with $k$).
Furthermore, if  $\beta$ is also positive in all real embeddings of $\Omega$ into~$\bR$ then the relative norm
$N(\beta) \in k$ is totally positive as well.
In particular, the principal ideal generated by such $\beta$ must be in the kernel~$\mf C_0$. This means that the quotient
group $\mathcal I_\Omega/\mf C_0$ yields a class group for modulus~$\mf m \cO_\Omega$.

Let $\mf q$ be a prime ideal of $\cO_\Omega$ prime to $\mf m \cO_\Omega$, and let $\mf p$ be the intersection of $\mf q$
with the subfield $k$. In particular the relative norm $N\mf q \subseteq \cO_k$ is $\mf p^f$ where $f$ divides the relative degree~$m$.
Observe that $\mf q$ splits in $K^*$ if and only if $\mf p$ splits in $K$, since $f$ does not divide $r$.
But $\mf p$ splits in $K$ if and only it is in the identity class of $\cC(K/k)$. Since $f$ is prime to the order of $\cC(K/k)$
this occurs if and only if  $\mf p^f$ in  in the identity class of~$\cC(K/k)$. In other words, $\mf q$ splits in $K^*$
if and only if $\mf q \in \mf C_0$. By Takagi's results this means that $K^*$ is the class field extension of $\Omega$
corresponding to $\mathcal I_\Omega/\mf C_0$. So we write $\cC(K^*/\Omega)$ for $\mathcal I_\Omega/\mf C_0$.
Furthermore, the homomorphism $\mathcal I_\Omega \to \cC(K/k)$ induces an injective homomorphism
$\cC(K^*/\Omega)  \to \cC(K/k)$. Since both groups have order $r$, this map $\cC(K^*/\Omega)  \to \cC(K/k)$,
induced by the relative norm map, is an isomorphism.

\textbf{Step 2.} The second step is to use the isomorphism of step 1, and the assumption of Satz 2 for $K^*/\Omega$,
to define a Frobenius isomorphism on the class group~$\cC(K/k)$. For the isomorphism we will try the composition
$$
\cC(K/k) 
\to
\cC(K^*/\Omega)
\to 
G(K^*/ \Omega)
\to
G(K/k)
$$
where the first map is the inverse of the isomorphism of step 1, the second is the Frobenius isomorphism that exists
by assumption of Satz 2 for $K^*/\Omega$, and the third map is the natural  isomorphism given by restrictions of automorphisms.
This composition is an isomorphism, so to prove Satz 2 for $K/k$ and modulus $\mf m$ we just need to show that this maps the class of a prime ideal $\mf p$
to its corresponding Frobenius element in $G(K/k)$. 

So fix a prime ideal~$\mf p$ of $\cO_k$ prime to~$\mf m$ and 
let~$\mf q$ be a prime ideal in $\cO_\Omega$ above~$\mf p$.
Let~$(\sigma, \tau)$ be the Frobenius element in $G(K^*/k) = G\times H$ corresponding to $\mf p$.
Identifying $G$ and~$H$ with subgroups of $G\times H$, we can write this element as~$\sigma \tau$ and
$$
\sigma \tau A \equiv A^{N\mf p} \pmod {\mf p}
$$
for all $A \in \cO_{K^*}$. 
So $\sigma A \equiv A^{N\mf p} \pmod {\mf p}$ for~$A \in \cO_K$,
and $\tau A \equiv A^{N\mf p} \pmod {\mf p}$ for $A \in \cO_\Omega$
(where here $G$ is identified with $G(K^*/\Omega)$ and $H$ is identified with~$G(K^*/K)$).
Thus $\sigma$ is the Frobenius element of  $\mf p$ in $G=G(K/k)$, and $\tau$ is the Frobenius 
element of $\mf p$ in $H=G(\Omega/k)$.
Note that the relative norm $N \mf q$ is~$\mf p^f$ where $f$ is the order of $\tau$ in $H$. So
$$
\sigma^f A \equiv
\sigma^f \tau^f A  \equiv (\sigma \tau)^f A \equiv A^{{N\mf p}^f}
\equiv  A^{{N\mf q}} \pmod {\mf q}
$$
for all $A \in \cO_{K^*}$ (where here $N \mf q$ is the absolute norm).
Thus $\sigma^f$ is the Frobenius element associated with $\mf q$.
Note that $f$ divides $m = [\Omega: k]$, so $f$ is relatively prime to $r = [K: k]$. This means that
$u f \equiv 1 \pmod r$ for some $u$, and $(\sigma^f)^u = \sigma$.

The isomorphism~$\cC(K^*/\Omega) \to \cC(K/k)$ from step 1
sends the class of $\mf q^u$ to the class of its relative norm~$(\mf p^f)^u$. The class of $\mf p^{f u}$ is 
the class of $\mf p$ since $f u \equiv 1$ modulo~$r$. Thus the inverse isomorphism $\cC(K/k) 
\to
\cC(K^*/\Omega)$
maps the class of $\mf p$ to the class of $\mf q^u$. Since the class of $\mf q$ maps to its Frobenius $\sigma^f$
under the next map~$\cC(K^*/\Omega) \to G(K^*/\Omega)$, the class of $\mf q^u$ maps to 
$$(\sigma^f)^{u} = \sigma^{fu}  = \sigma.$$
Finally, $\sigma$ maps to $\sigma$ under the map $G(K^*/ \Omega)
\to
G(K/k)$ (here we are identifying~$G = G(K/k)$ with $G(K^*/ \Omega)$).

In conclusion the above composition $\cC(K/k) \to G(K/k)$ sends the class of a prime ideal to its Frobenius element.
\end{proof}

\color{blue}
\begin{remark}
This shows that the weak version of Satz 2 for $K^*/\Omega$ implies the weak version of Satz 2 for $K/k$.
\end{remark}
\color{black}

\begin{remark}
We now see that Satz 2 holds for any Abelian extension of degree equal to the product of distinct primes. 
To see this first assume that $K/k$ has prime degree~$l$. Using 5.~we have Satz 2 for $K(\zeta)/k(\zeta)$ where $\zeta$ is a primitive
$l$th root of unity. By 6.~we have Satz 2 for $K/k$ as well. Finally 3.~extends Satz 2 to $K/k$ when~$[K:k]$ the product of distinct primes, or
more generally when the Galois group is the product of cyclic groups of prime order.
\end{remark}

\color{blue}
\begin{remark}
So Artin has proved the following:
\end{remark}
\begin{thm}
Suppose $K/k$ is an Abelian extension of number fields with Galois group~$G$. If $G$ can be factored into cyclic groups of prime order
then the weak form of Satz 2 holds for $K/k$.
\end{thm}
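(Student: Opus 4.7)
The plan is to build on the previous results (1 through 6) in an inductive fashion, exactly as sketched in the remark preceding the theorem. The argument factors cleanly into two moves: first reduce from a product of cyclic groups of prime order to a single cyclic factor of prime order, and then handle that single factor by ``base-changing'' through a cyclotomic extension.

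\textbf{Step 1: Reduction to cyclic of prime order.} Write $G = G_1 \times \cdots \times G_t$ where each $G_i$ is cyclic of prime order $l_i$ (the primes need not be distinct). By the Galois correspondence, each factor $G_i$ determines an intermediate field $K_i$ fixed by $G_1 \times \cdots \times \widehat{G_i} \times \cdots \times G_t$, and $K_i/k$ is cyclic of prime degree $l_i$. Crucially, because the product decomposition of $G$ is internal, for any $i \neq j$ the subgroups fixing $K_i$ and $K_j$ jointly generate all of $G$, so $K_i \cap K_j = k$. More generally, for each $i$ one has $K_i \cap (K_1 \cdots \widehat{K_i} \cdots K_t) = k$, and $K = K_1 K_2 \cdots K_t$. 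Assuming Satz 2 is established in the prime-degree case, an iterated application of result 3 then yields Satz 2 for $K/k$.

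\textbf{Step 2: The cyclic prime-degree case.} Suppose $K/k$ is cyclic of prime degree $l$. Let $\zeta = e^{2\pi i/l}$ and set $\Omega = k(\zeta)$ and $K^{*} = K(\zeta) = \Omega(\alpha)$, where $K = k(\alpha)$. The degree $[\Omega:k]$ divides $l-1$ and is therefore coprime to $l$, so $K \cap \Omega = k$ and $K^{*}/\Omega$ is cyclic of degree $l$ (or trivial, in which case Satz 2 is vacuous). Now $\Omega$ contains the $l$-th roots of unity, so result 5 applies and gives Satz 2 for $K^{*}/\Omega$. Finally, since $[\Omega : k]$ and $[K:k] = l$ are coprime, result 6 (or its generalization stated in the remark right after it) applies, and we conclude that Satz 2 holds for $K/k$.

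\textbf{Assembling the proof.} Combine Steps 1 and 2: every prime-degree factor $K_i/k$ gets Satz 2 from Step 2, and the iterated application of result 3 in Step 1 assembles these into Satz 2 for the full composite $K = K_1 \cdots K_t$.

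\textbf{Main obstacles.} There are essentially no deep obstacles here; the theorem is a direct corollary of the preceding results. The one point requiring a modicum of care is the verification that in Step 1 the subfields $K_i$ really do satisfy $K_i \cap (K_1 \cdots \widehat{K_i} \cdots K_t) = k$, so that result 3 can be iterated (rather than only applied once). This follows cleanly from the internal direct product structure of $G$ via the Galois correspondence. A secondary subtlety is keeping track of moduli: one should choose a single modulus $\mathfrak{m}$ in $\mathcal{O}_k$ large enough to be valid simultaneously for every subextension in sight (and then its extensions to the intermediate fields $\Omega, K_i, K^{*}$, etc.), which is a routine cofinality argument and is why only the \emph{weak} version of Satz 2 is obtained.
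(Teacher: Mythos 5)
Your proposal is correct and follows essentially the same route as the paper: handle the prime-degree case by applying result 5 to $K(\zeta)/k(\zeta)$ and descending via result 6, then assemble the general case by iterating result 3 on the prime-degree subextensions cut out by the direct factors of $G$. The details you add (the verification that $K_i \cap (K_1\cdots\widehat{K_i}\cdots K_t)=k$ so that result 3 can be iterated, and the choice of a common modulus) are exactly the right points to check and are consistent with the paper's sketch.
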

\color{black}

\chapter{Continuation of $L(s, \chi)$ to $\Re(s) \le 1$}

We return to the general case, assuming Satz 2 holds for the Abelian case.
We write~$m(\sigma)$ for the order of an element $\sigma \in G$ of the Galois group of $K/k$.
For each~$\sigma \in G$, let $\mf g^\sigma$ be the subgroup generated by $\sigma$,
and let $\Omega_\sigma$ be the subfield of~$K$ of elements fixed by $\sigma$.
So $\mf g^\sigma$ is the Galois group of $K/\Omega_\sigma$.

Let $\psi_i^{(\sigma)}$ for $i=1, \ldots, m(\sigma)$ be the irreducible characters of the Abelian group~$\mf g^\sigma$
where $\psi_1^{(\sigma)}$ is the trivial character (the ``Hauptcharakter" or the ``principal character'').
If we denote by  $\chi_{\psi^{(\sigma)}_i}$  the induced character of $G$ then, by
Satz 1, equation~(\ref{E15}), we have the following which is valid up to a finite
number of factors in the Euler products:
$$
L\left(s, \psi_i^{(\sigma)} \; ; \; \Omega_\sigma  \right) = L\left(s, \chi_{\psi_i^{(\sigma)}} \; ; \; k  \right).
$$
As in (\ref{E7}), we decompose each $\chi_{\psi^{(\sigma)}_i}$ and obtain the factorizations
\begin{equation} \label{E23}
L\left(s, \psi_i^{(\sigma)} \right) 
=
\prod_{\nu = 1}^x \left( L (s, \chi^\nu) \right)^{r^{(\sigma)}_{i\nu}}
\qquad
(i = 1, 2, \dots, m(\sigma))
\end{equation}
where each $r_{i\nu}^{(\sigma)}$ is a nonnegative integer, and again with validity up to a finite number of factors in the Euler product.
By Satz 2, the left-hand side of (\ref{E23}) corresponds to a traditional $L$-series whose extension to $\bC$ and functional equation was established by Hecke.
We can use the equations (\ref{E23}) to solve for $L(s, \chi^\nu)$ in order to prove the continuation of each~$L(s, \chi^\nu)$.
We can focus on the case $\nu > 1$ since~$L(s, \chi^1) = \zeta_k(s)$ 
is a Dedekind zeta function whose meromorphic continuation is known.\footnote{\color{blue}Actually for any
 $\chi^i$ of degree 1 we have the continuation since that is the case that Hecke considered (assuming Satz 2).}

For $\nu > 1$ we will show that $L(s, \chi^\nu)$ can be expressed in terms of a product of rational powers of the $L\bigl(s, \psi_i^{(\sigma)}\bigr)$ 
where $\sigma$ varies in $G$ and $i$ varies in $\{2, \ldots, m(\sigma)\}$, avoiding the trivial character $\psi_1^{(\sigma)}$.

Because of (\ref{E23}) it suffices to show that the  system of $x$ linear equations
\begin{equation} \label{E24}
\sum_{\sigma \ne 1} 
\sum_{i=2}^{m(\sigma)} 
r^{(\sigma)}_{i\nu} x_i^\sigma 
= \delta_{k \nu} \qquad \nu = 1, 2, \dots, x
\end{equation}
can be solved for each given $k$ in the sequence $2, \ldots, x$.\footnote{\color{blue}
Here Artin is using $k$ as an index. Once we show (\ref{E24}) can be solved, $k$ will return
to its role as denoting the base field.}

{\color{blue}

\begin{remark}
Suppose $x^\sigma_i \in \bQ$ is a solution to the above system of linear equations (for a fixed $k$), then 
\begin{eqnarray*}
\prod_{\sigma\ne 1} \prod_{i=2}^{m(\sigma)}  L\left(s, \psi_i^{(\sigma)} \right)^{x_i^{\sigma}}
&=&
 \prod_{\sigma\ne 1} \prod_{i=2}^{m(\sigma)} \prod_{\nu = 1}^x   L (s, \chi^\nu)^{r^{(\sigma)}_{i\nu} x^\sigma_i}
\\
&=&
\prod_{\nu = 1}^x \prod_{\sigma\ne 1} \prod_{i=2}^{m(\sigma)}  L \left(s, \chi^\nu \right)^{r^{(\sigma)}_{i\nu} x^\sigma_i}\\
&=&
\prod_{\nu = 1}^x  L (s, \chi^\nu)^{\delta_{k\nu}}\\
& =&
L (s, \chi^k).
\end{eqnarray*}
There is a subtlety here: 
the $x_i^\sigma$ are allowed to be rational and so the above quantities are dependent on how the various rational powers
are chosen. Depending on the choices it is possible that the calculation is
 valid only up to a $d$th root of unity where $d$ is a common denominator for the~$x_i^\sigma$.
 So we should think of this equality as holding up to a $d$th root of unity, and as usual up to a finite number of Euler factors.
 
What we can safely say is that $L(s, \chi^k)^d$ can be expressed in terms of a product of integral powers of
the $L\bigl(s, \psi_i^{(\sigma)} \bigr)$ (ignoring a finite number of Euler factors), and so $L(s, \chi^k)^d$ has a meromorphic
continuation to $\bC$. Another way to say this is that there is a meromorphic continuation of $L(s, \chi^k)$
on a Riemann surfaces $\mc L$ mapping onto~$\bC$ with fibers of size bounded by $d$. Or we can take the old point of view that $L(s, \chi^k)$
is a ``multivalued function'' that has an analytic continuation outside a discrete set of singularities.

As we will see, Artin suspects this continuation is single valued (that is, $\mc L$ can be taken to be $\bC$). In other words, 
Artin hoped that $L(s, \chi^k)$ itself, and not a power, has a meromorphic continuation.
This was first proved by R.~Brauer~\cite{Brauer1947a} in~1947. Artin's deeper conjecture that this continuation is actually analytic when~$\chi^k \ne 1$ is
still open.
\end{remark}
}

Now $r_{i1} = 0$ for each $i>1$, so equation~(\ref{E24}) with $\nu =1$ automatically holds.\footnote{
\color{blue}
This follows from (\ref{E6}).}
Thus we only need
to consider $\nu \ge 2$. So in order for (\ref{E24}) to be solvable, it is sufficient that the matrix
$$
\left(  r_{i\nu}^{(\sigma)} \right) \qquad
\sigma \ne 1,
\qquad
i=2, \dots, m(\sigma); 
\qquad
\nu = 2, \dots, x
$$
has rank $x-1$. Here we regard the columns as being indexed by  $(\sigma, i)$ and rows as being indexed by~$\nu$.
For this matrix to have rank $x-1$ it is necessary and sufficient that the $x-1$ rows of this matrix be linearly independent. 
So we just need to to show that the only solution to the system of linear equations 
\begin{equation} \label{E25}
\sum_{\nu=2}^x r_{i \nu}^{(\sigma)} y_\nu = 0
\end{equation}
is the zero solution with $y_\nu = 0$
(where the system contains an equation for each~$(\sigma, i)$ where~$\sigma \ne 1$
and $2 \le i \le m(\sigma)$.)
So assume $y_2, \ldots, y_x$ is a solution to the system.
Fix $\sigma$ and $\tau \in \mf g^\sigma$ (where $\tau = 1$ is allowed), and for each $i$ from $2$ to~$m(\sigma)$
multiply~(\ref{E25}) by $\psi_i^{(\sigma)} (\tau)$. Now sum the resulting equations as $i$ varies:
$$
\sum_{i=2}^{m(\sigma)} \sum_{\nu=2}^x    r_{i \nu}^{(\sigma)} \psi_i^{(\sigma)} (\tau) \, y_\nu = 0.
$$
Using (\ref{E6}) we can simplify this equation, giving 
the following equation for each choice of $\sigma \in G$ and~$\tau \in \mf g^\sigma$:
$$
\sum_{\nu=2}^x \bigl( \chi^\nu(\tau) - r_{1 \nu}^{(\sigma)}  \bigr)  y_\nu 
= 0
$$
or
$$
\sum_{\nu=2}^x  \chi^\nu(\tau)   y_\nu 
=
\sum_{\nu=2}^x r_{1 \nu}^{(\sigma)}  y_\nu.
$$
The right hand side does not depend on $\tau$, so the left hand side has the same value for all $\tau\in \mf g^\sigma$. In particular,
$$
\sum_{\nu=2}^x  \chi^\nu(\tau)   y_\nu = \sum_{\nu=2}^x  \chi^\nu(1)   y_\nu 
$$
for all $\tau \in \mf g^\sigma$. Note the right hand side of this equation does not depend on $\sigma$, and so the left hand side has the same
value for all $\tau \in G$. Call this value $-y_1$, so
$$
\sum_{\nu=1}^x  \chi^\nu(\tau)   y_\nu = 0
$$
for all $\tau \in G$. Using (\ref{E2}), we see that for each $i \in \{1, \ldots, x\}$
\begin{eqnarray*}
0 = 0 \cdot \sum_{\tau \in G} \chi^{i} (\tau^{-1})  &=&\left(\sum_{\nu=1}^x  \chi^\nu(\tau)   y_\nu\right)  \sum_{\tau \in G} \chi^{i} (\tau^{-1}) \\
&=& 
\sum_{\nu=1}^x \left(  \sum_{\tau \in G}   \chi^\nu(\tau) \chi^{i} (\tau^{-1})  \right) y_\nu \\
&=&
\sum_{\nu=1}^x n \delta_{\nu i} y_\nu = n y_i.
\end{eqnarray*}
So $y_i = 0$ for all $i \in \{1, \ldots, x\}$, establishing the linear independence claim, and so the solvability of (\ref{E24}).

We can now express each $L(s, \chi^\nu)$ in terms of Abelian $L$-series, which gives us a way to extend $L(s, \chi^\nu)$
with properties similar to those of traditional $L$-series. For example, if $\chi^\nu$ is not the identity character ($\nu >1$) the expression 
only involves~$L\bigr(s, \psi^{(\sigma)}_i\bigr)$ with~$i \ne 1$, so $L(s, \chi^j)$ is regular and nonvanishing at $s=1$.

%

Now we change our initial definition of $L$-functions. 
A solution to (\ref{E24}) expresses~$L(s, \chi^\nu)$ in terms of a product of rational powers of traditional 
$L$-series but only up to a finite number of factors. We can modify the definition of $L(s, \chi^\nu)$ so that this expression
is an exact equality, and then use (\ref{E14}) to define $L(s, \chi)$ for general characters.
This modified definition changes $L(s, \chi)$ up to a finite number of factors, so all our results that are valid up to a finite
number of factors will continue to hold with the modified definition. 
The resulting $L(s, \chi)$ will analytically continue as a multivalued function on the whole plane $\bC$ minus possibly a discrete set of branch points,
and going around a branch point will only change the value by a root of unity.
The functional equation of Hecke holds for the Abelian $L$-series,
so will yield a functional equation for our new $L$-series. 
This functional equation can be used to show that the definition of our $L$-series is independent of the solution to~(\ref{E24})
used to build our new $L$-series.

{\color{blue}

\begin{remark}
Let $x_i^{\sigma} \in \bQ$ be the numbers occurring in a solution to (\ref{E24}) (where we change~$k$ to $j$ in what follows), then Artin proposes to 
use the resulting relation, originally valid only up to a finite number of Euler factors, as a new, modified definition:
$$
L (s, \chi^j) \; \defeq \;  \prod_{\sigma\ne 1} \prod_{i=2}^{m(\sigma)}  L\left(s, \psi_i^{(\sigma)} \right)^{x_i^{\sigma}}.
$$
This makes $L(s, \chi^j)$ a multivalued function on $\bC$ minus a discrete set of branch points, that is to say it is a
meromorphic function on a Riemann surface covering~$\bC$.
If $d$ is the common denominator of the $x_i^{\sigma}$ then
$$
L (s, \chi^j)^d = \prod_{\sigma\ne 1} \prod_{i=2}^{m(\sigma)}  L\left(s, \psi_i^{(\sigma)} \right)^{d x_i^{\sigma}}
$$
gives an exact equation between meromorphic functions, where the functions on the right
satisfy nice functional equations established by Hecke. 
From this we see that Artin's definition actually gives $L(s, \chi^j)$ as a meromorphic function on a Riemann surface $\mc L$ which covers
$\bC$ with degree bounded by $d$.

If we want to derive a functional equation for this meromorphic function~$L (s, \chi^j)^d$ we need to observe that
we can use the same solution to (\ref{E24}) for writing $L(s, \overline {\chi}^j)^d$ in terms of Abelian $L$-series:
$$
L \left(s, \overline \chi^j\right)^d = \prod_{\sigma\ne 1} \prod_{i=2}^{m(\sigma)}  L\left(s, \overline \psi_i^{(\sigma)} \, \right)^{d x_i^{\sigma}}
$$
where $\overline \chi^j$ denotes the complex conjugate of $\chi^j$, and ${\overline \psi_i^{(\sigma)}}$ denotes the complex conjugate of ${\psi_i^{(\sigma)}}$.
The validity of this can be 
seen by oberving that~(\ref{E6}) and~(\ref{E7}) are well-behaved under complex conjugation, and the induced character of $\overline \psi_i^{(\sigma)}$ satisfies
$$
\chi_{\overline \psi_i^{(\sigma)}} \; = \; \overline{\chi_ {\psi_i^{(\sigma)}}}.
$$
This gives us a version of~(\ref{E23}) for conjugate characters using the same integers~${r^{(\sigma)}_{i\nu}}$ as the original~(\ref{E23}), and so a solution to (\ref{E24}) will work for both $L (s, \chi^j)^d$ 
and~$L \bigl(s, \overline \chi^j\bigr)^d$.

As we will see, the functional equation for Abelian $L$ series is of a form that is closed under
products, so gives a nice functional equation for  $L(s, \chi^j)^d$.  Artin further observes that the functional equation 
forces $L(s, \chi^j)$,
or better $L(s, \chi^j)^d$, 
 to be independent of the solution to (\ref{E24}). In other words, there can be only one definition for $L(s, \chi^j)^d$
that satisfies such a functional equation and agrees with the earlier definition up to a finite number of Euler factors.
\end{remark}
}

The functional equations for the Abelian $L$-series, and hence our new $L$-series, has the following form:\footnote{E.~Landau, \"Uber Ideale und Primideale in Idealklassen \color{blue} (concerning ideals and prime ideals in ideal classes)\color{black}. Math. Zeitschrift Bd. 2, Seite 104, Satz LXVI.}
$$
{L (1-s, \overline \chi^i)}
=
a_i A^s
 \left( \Gamma(s)\right)^{l_i^{(1)}} 
  \left( \cos \frac{s\pi}{2}\right)^{l_i^{(2)}}
  \left( \sin \frac{s\pi}{2}\right)^{l_i^{(3)}}
{L(s, \chi^i)}.
$$
Here $l_i^{(1)}, l_i^{(2)}, l_i^{(3)}$ are rational, and $A$ is a positive real number. Note that $a_i$ depends on
a choice of branch, and $a_i$ may change by a root of unity
if we change the branch.\footnote{\color{blue}We can take $a_i$ to be a true constant and we can take $l_i^{(j)}$ to be integers if we raise both sides of the equation to an appropriate integral power.}

{\color{blue}

\begin{remark}
In verifying these claims it might be best to work with a power $L(s, \chi^j)^d$ that is meromorphic.
As mentioned above, the transformation from $\chi^j$ and $\overline \chi^j$ is well-behaved and we 
can use the same solution to (\ref{E24}) for both $\chi^j$ and $\overline \chi^j$ to get compatible
decompositions.  So since the above functional equation has a form that is closed under powers and products, we get 
a functional equation for $L(s, \chi^j)^d$, and so for $L(s, \chi^j)$ for a choice of branch.
\end{remark}

\begin{remark}
The form of the functional equation for Abelian $L$-series used here by Artin is a bit different than the form it is usually given today, so it is worth a few comments. (I have not consulted Hecke's original paper, nor the paper of Landau cited by Artin. Instead I consulted
 Tate's thesis. Tate was a student of Artin in the 1940s who showed how to replace Hecke's approach with an approach
using harmonic analysis on the id\`eles.)

Suppose $\chi$ is an Abelian character with conductor~$\mf f$. Then Tate's thesis gives a form of the functional equation
(see \cite{CasselsF1967} pages 342--346) that leads naturally to the version used by Artin.
To describe this, let $S$ be a finite set of places of $k$ including all divisors of the conductor $\mf f$ and all Archimedean places. 
Tate shows that
$$
L(1-s, \chi^{-1}) = \left( \prod_{\mf p \in S} \rho_{\mf p} (s) \prod_{\mf p \not\in S} (N {\mf d}_{\mf p})^{s-1/2} 
\; \chi^{-1}( {\mf d}_{\mf p}) \right) L(s, \chi)
$$
where $\rho_{\mf p}(s)$ denotes certain meromorphic functions related to $\chi$ and the place~$\mf p$,  which are
explicitly 
calculated in Tate's thesis (\cite{CasselsF1967}, Pages 317, 319, 322). Here~${\mf d}_{\mf p}$ denotes the local different ideal.
Recall that the absolute norm of the \text{product~$\mf d = \prod \mf d_{\mf p}$} of these ideals gives the absolute discriminant $|\Delta_k|$ of the field~$k$
(where $\mf p$ includes the non-Archimedean primes in $S$).
The functions~$\rho_{\mf p}(s)$ are as follows:
\begin{itemize}
\item
Suppose $\mf p$ is a real place. 
Consider a  principal ideals generated by $\alpha \in k^\times$ such that
(1) $\alpha \equiv 1 \pmod{\mf f}$, (2) $\alpha < 0$ at $\mf p$, and (3) $\alpha > 0$ for all real places not equal to~$\mf p$.
 (Weak approximations assures such an $\alpha$ exists).
Then if $\chi$ has value $1$ on such a principal ideal $\alpha \cO_k$ then
$$
\rho_{\mf p}(s) = \left( \frac{2}{(2\pi)^s} \right) \cos \left( \frac{\pi s}{2}\right) \Gamma(s),
$$
but if $\chi$ has value $-1$ on $\alpha \cO_k$ then
$$
\rho_{\mf p}(s) = -i \left( \frac{2}{(2\pi)^s} \right) \sin \left( \frac{\pi s}{2}\right) \Gamma(s).
$$
\item
If $\mf p$ is a complex place then
\begin{eqnarray*}
\rho_{\mf p}(s) &=& (2\pi)^{1-2s}
\frac{\Gamma(s)}
{\Gamma(1-s)}\\
&=& 2 (2 \pi)^{-2s}  \sin(\pi s) \Gamma(s)^2 \\
& =  & \left( \frac{2}{(2\pi)^s} \right)^2\sin \left( \frac{\pi s}{2}\right)\cos \left( \frac{\pi s}{2}\right) \Gamma(s)^2 
\end{eqnarray*}
(Note this is just $i$ times the product of the two formulas for real $\rho$).
The first equation is essentially the formula from Tate's thesis. The other equations are justified by
the identity 
$$\Gamma(s) \Gamma(1-s) = \frac{\pi}{\sin \pi s}$$
and the double angle identity for the sine function.

\item
If $\mf p$ is a ramified non-Archimedean place with conductor component $\mf f_{\mf p}$ then
$$
\rho_{\mf p}(s) =  N( \mf d_{\mf p} \mf f_{\mf p})^{s-1/2} W_{\mf p}(\chi)
$$
where $W_{\mf p}(\chi)$ is a certain root of unity called the root number.

\end{itemize}

When we substitute these formulas for $\rho_{\mf p}(s)$ and simplify we obtain the formula
$$
L(1-s, \chi^{-1})  = w 
\left( \frac{2}{(2\pi)^s} \right)^{n}
 (N(\mf f) |\Delta_k|)^{s-1/2}  \sin \left( \frac{\pi s}{2}\right)^{n_1} \cos \left( \frac{\pi s}{2}\right)^{n_2} 
\Gamma(s)^n L(s, \chi)
$$
where $n = [k: \bQ]$, where $n_1$ and $n_2$ are two nonnegative integers with $n_1 + n_2 = n$, where $N(\mf f)$ is the norm of the conductor of $\chi$, 
and where $w$ is a root of unity.
\end{remark}

{
\color{blue}

\begin{remark}
Artin makes the observation that the functional equation fixes the $L$-series exactly, not just up to a finite number of factors. 
In other words, the functional equation picks out a canonical representation of a class of $L$-series up to ``finite Euler factor equivalence''.
Undoubtably this principal was well-known when Artin wrote this article, but it might be helpful to supply the details.
The following Lemma helps make this clear and can be proved just by considering the location of zeros and poles.
Note this lemma can be generalized to a much broader class of admissible functional equations, but we will stick to the concrete form given
in the paper.
\end{remark}

\begin{lemma}\label{fixingL_lemma}
Suppose $L_1(s), \widetilde{L_1}(s)$, $L_2(s), \widetilde{L_2}(s)$ are nonzero meromorphic functions on $\bC$ and assume 
the following two conditions. (i)
$$
\frac{L_i (s)}{ \widetilde{L_i}(1-s)} =  a_i \;  A_i^s \; 
\Gamma(s)^{n_i^{(1)}} 
  \left( \cos \frac{s\pi}{2}\right)^{n_i^{(2)}}
  \left( \sin \frac{s\pi}{2}\right)^{n_i^{(3)}}
$$
where $a_i \in \bC^\times$, where $n_i^{(j)} \in \bZ$, and where $A_i$ is a positive real constant. And (ii)
$$
L_2 (s) = P(s) L_1 (s), \qquad \widetilde{L_2}(s) = \widetilde{P}(s) \,  \widetilde{L_1}(s)
$$
where
$$
P(s) = \prod_{i=1}^e (1 - \varepsilon_i p_i^{-s})^{u_i}, \qquad  \widetilde{P}(s)  = \prod_{i=1}^e (1 - \overline\varepsilon_i p_i^{-s})^{u_i}
$$
where the product is over distinct pairs $(\varepsilon_i, p_i)$  composed of a prime $p_i \in \bZ$ 
and a root of unity~$\varepsilon_i \in \bC^\times$, and where $u_i \in \bZ$.
Then necessarily
$$
L_1 (s) = L_2(s)
$$
as meromorphic functions.
\end{lemma}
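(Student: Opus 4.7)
The goal is to show $P(s) \equiv 1$, for then hypothesis (ii) immediately gives $L_1 = L_2$. First I would divide the two functional equations in (i) term by term to eliminate the four $L$-functions, obtaining
\begin{equation*}
\frac{P(s)}{\widetilde{P}(1-s)} \;=\; c\,B^{s}\,\Gamma(s)^{N_1} \cos\!\bigl(\tfrac{s\pi}{2}\bigr)^{N_2} \sin\!\bigl(\tfrac{s\pi}{2}\bigr)^{N_3}
\end{equation*}
with $c = a_2/a_1 \in \bC^\times$, $B = A_2/A_1 > 0$, and $N_j = n_2^{(j)} - n_1^{(j)} \in \bZ$. The crucial feature is that the right-hand side is meromorphic on $\bC$ with all zeros and poles supported on the integers.

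Next I would localize the zeros and poles of the left-hand side. Each factor $(1 - \varepsilon_i p_i^{-s})^{u_i}$ of $P$ has its zeros (or poles, if $u_i<0$) exactly at the points $s_{i,k} \defeq (\log \varepsilon_i + 2\pi i k)/\log p_i$ for $k \in \bZ$; since $|\varepsilon_i|=1$, we have $\log \varepsilon_i \in i\bR$, so every such point satisfies $\Re(s) = 0$. Symmetrically, every zero and pole of $\widetilde{P}(1-s)$ lies on $\Re(s) = 1$. Matching against the right-hand side whose singularities lie on $\bZ \subset \bR$, the only admissible positions are $s = 0$ (for $P$) and $s = 1$ (for $\widetilde{P}(1-s)$). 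Hence $P(s)$ is holomorphic and nonzero on $\bC \setminus \{0\}$.

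To conclude $P\equiv 1$, I would argue by induction on the number of Euler factors. Suppose some $u_i \neq 0$ and let $p_e$ be the largest prime among the indices with $u_i \neq 0$. For distinct pairs $(\varepsilon_i,p_i) \neq (\varepsilon_j,p_j)$, I claim the intersection $S_i \cap S_j$ of zero-sets is finite: if $p_i = p_j$ the two sets are disjoint (different $\varepsilon$'s give different cosets of $(2\pi i/\log p_i)\bZ$), and if $p_i \neq p_j$ the $\bZ$-linear independence of $\log p_i$ and $\log p_j$ (a consequence of unique factorization) forces at most one common point. Hence for $|k|$ sufficiently large, $s_{e,k}$ lies in $S_e$ alone, giving $P$ a genuine zero or pole of order $u_e \neq 0$ at a point with $\Re(s)=0$ but $s \neq 0$, contradicting the conclusion of the preceding paragraph. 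Therefore $u_e = 0$, and induction removes the remaining factors, giving $P \equiv 1$.

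The main obstacle is the final stage: certifying that no conspiracy among the Euler factors can cancel out their infinite families of equally-spaced zeros except at a sparse set of coincidences. The decisive ingredient is the $\bZ$-linear independence of $\{\log p : p \textrm{ prime}\}$; without it, the zero-patterns of different factors could a priori overlap along entire arithmetic progressions and telescope to give the purely integer-supported singularity set on the right-hand side.
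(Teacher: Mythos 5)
Your proposal is correct and follows essentially the same route as the paper's proof: form the quotient $P(s)/\widetilde{P}(1-s)$, confine its zeros and poles by comparing the vertical lines $\Re(s)=0$ and $\Re(s)=1$ against the explicit right-hand side, and then use the $\bQ$-linear independence of $\log p$ for distinct primes to produce an uncancelled zero of each Euler factor away from $s=0$. The only (harmless) difference is that the paper first shows the exponents of $\Gamma$, $\cos$, and $\sin$ all vanish by counting real zeros and poles, whereas you bypass this by noting that the right-hand side's singularities lie in $\bZ$ and intersecting with the two vertical lines.
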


\begin{proof}
Consider the function
$$
R(s) = \frac{L_2 (s)}{ \widetilde{L_2}(1-s)} \cdot \frac { \widetilde{L_1}(1-s)} {L_1 (s)}  = \frac{P(s)}{ \widetilde{P}(1-s)} 
$$
which, according to the functional equations of assumption (i), has the form
$$
R(s) =  a \;  A^s \; 
\Gamma(s)^{n^{(1)}} 
  \left( \cos \frac{s\pi}{2}\right)^{n^{(2)}}
  \left( \sin \frac{s\pi}{2}\right)^{n^{(3)}}
$$
for some  $a \in \bC^\times$,  $n^{(j)} \in \bZ$, and  $A$ a positive real constant. 
Note that all the zeros and poles of $P(s)$ occur on the line $\Re(s) = 0$ (because $p_i^s$ can equal $\varepsilon_i$ only
on this line),
and all the zeros and poles of $\widetilde P(1-s)$ occur on the line $\Re(s) = 1$.
So the only possible real zeros and poles of $R(s)$ occur when~$s=0$ or~$s=1$; in particular the number of real zeros and poles of $R(s)$ is finite. Since~$\Gamma(s)$ has no zeros and poles for  real~$s > 0$,
this forces~$n^{(2)} = n^{(3)} = 0$ in order to avoid an infinite number of real zeros or poles for~$R(s)$.
Since $\Gamma(s)$ has an infinite number of real poles (at nonnegative integers), we can conclude that $n^{(1)} = 0$ as well.
So $R(s) = a A^s$  has no zeros or poles.

As mentioned above, $P(s)$ and $\widetilde P(1-s)$ have disjoint sets of zeros and poles. Since the quotient $R(s)$  has no zeros or poles, this forces both $P(s)$ and $\widetilde P(1-s)$ to have no zeros or poles.
Consider
$$
P(s) = \prod_{j=1}^e (1 - \varepsilon_j p_j^{-s})^{u_j}
$$
and the zero sets of the factors $1 - \varepsilon_j p_j^{-s}$. We see that $s$ is in the zero set of the~$j$th factor
if and only if $p_j^s = \varepsilon_j$. If $\varepsilon_j = \exp(2\pi r_j i)$ with $r_j \in \bQ$, then
$s$ is in the zero set if and only if $s =0 +  t i$ with
$$
t = 2\pi \; \frac{r_j + k}{\log p_j}
$$
for some $k\in \bZ$.
Observe that if $p_j = p_l$ but $\varepsilon_j \ne \varepsilon_l$, then there can be no common root
to $1 - \varepsilon_j p_j^{-s}$ and $1 - \varepsilon_l p_l^{-s}$ simply because $p_j^s = p_l^s$ cannot
be equal to both~$\varepsilon_j$ and~$\varepsilon_l$. So consider the case where  $p_i \ne p_l$.
A common zero of $1 - \varepsilon_j p_j^{-s}$ and~$1 - \varepsilon_l p_l^{-s}$ would yield a real $t$ with
$$
t = 2\pi \; \frac{r_j + k}{\log p_j} =  2\pi \; \frac{r_l + k'}{\log p_l}
$$
where $k, k'\in \bZ$.  If, in addition, $t$ is no zero, then
we would be able to find two nonzero integers $a, b \in \bZ$ where
$$
\frac{\log p_l}{\log p_j} = \frac{a} {b}
$$
and so $\log p_l^b = \log p_j^a$, or more simply $p_l^b = p_j^a$, a contradiction.
So the only possible common zero of $1 - \varepsilon_j p_j^{-s}$ and~$1 - \varepsilon_l p_l^{-s}$
is $s=0$ (and that occurs only if $\varepsilon_j = \varepsilon_l = 1$).

Thus each factor $1 - \varepsilon_j p_j^{-s}$ of $P(s)$ has a zero that is not a zero of any other factor.
Since $P(s)$ has no zeros or poles this implies that each $u_j = 0$.
So $P(s) = 1$ as desired.
\end{proof}

\begin{corollary}
Aside from a possible multiplication by a root of unity,
the definition of $L(\chi^j, s)$ is independent of the solution to
 (\ref{E24}).
\end{corollary}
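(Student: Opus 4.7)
The plan is to reduce the statement to Lemma~\ref{fixingL_lemma}. Fix $j \in \{2,\ldots,x\}$ and let $\{x_i^{\sigma}\}$, $\{x_i'^{\sigma}\}$ be two rational solutions of (\ref{E24}) for this $j$. Denote by $L_1(s)$ and $L_2(s)$ the corresponding candidates for $L(s,\chi^j)$, and let $\widetilde L_1(s), \widetilde L_2(s)$ be the parallel products built from the conjugate characters $\overline{\psi_i^{(\sigma)}}$ with the same rational exponents. Choose $d \in \bN$ that clears all denominators of the $x_i^{\sigma}$ and $x_i'^{\sigma}$; then $L_1^d, L_2^d, \widetilde L_1^d, \widetilde L_2^d$ are unambiguous meromorphic functions on $\bC$.

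Next I would verify hypothesis (i) of the lemma. Raising Hecke's functional equation for each factor $L(s, \psi_i^{(\sigma)}; \Omega_\sigma)$ to the integer power $d x_i^{\sigma}$ (respectively $d x_i'^{\sigma}$) and multiplying yields an identity
\[
\frac{L_k^d(s)}{\widetilde L_k^d(1-s)} \;=\; a_k A_k^s\, \Gamma(s)^{n_k^{(1)}} \left(\cos \tfrac{s\pi}{2}\right)^{n_k^{(2)}} \left(\sin \tfrac{s\pi}{2}\right)^{n_k^{(3)}} \qquad (k=1,2),
\]
with integer exponents $n_k^{(\cdot)}$ and positive real $A_k$. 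This is precisely hypothesis (i), and the integrality of the exponents is exactly what the passage to the $d$-th power buys us.

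For hypothesis (ii), I would use (\ref{E23}) together with (\ref{E24}) to observe that both $L_1^d$ and $L_2^d$ agree with the original Euler product $L(s,\chi^j)^d$ at every prime $\mf p$ of $k$ outside the finite set of primes where (\ref{E23}) or Satz~1 fails. Hence $L_2^d / L_1^d$ is a finite product of local factors of the form $(1 - \varepsilon (N\mf p)^{-s})^u$ with $\varepsilon$ a root of unity and $u \in \bZ$. Writing $N\mf p = p^f$ for the rational prime $p$ below $\mf p$ and factoring $1 - \varepsilon p^{-fs} = \prod_{\omega^f = \varepsilon}(1-\omega p^{-s})$, this ratio takes the form $P(s) = \prod_i(1 - \varepsilon_i p_i^{-s})^{u_i}$ required by the lemma. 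The parallel argument for the conjugate characters produces $\widetilde P(s)$ (with $\varepsilon_i$ replaced by $\overline{\varepsilon_i}$) satisfying $\widetilde L_2^d = \widetilde P \cdot \widetilde L_1^d$.

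With both hypotheses verified, Lemma~\ref{fixingL_lemma} forces $L_1^d = L_2^d$ as meromorphic functions on $\bC$; extracting $d$-th roots gives $L_1 = \zeta L_2$ for some $d$-th root of unity $\zeta$, which is the asserted uniqueness. The main obstacle I anticipate is the bookkeeping in the third paragraph: one must collect the finitely many discrepant local factors into a single product indexed by \emph{distinct} pairs $(\varepsilon_i, p_i)$, handle the possibility that several primes of $k$ sit above one rational prime, and verify that the complex-conjugation involution on the roots of unity indeed matches $P$ with $\widetilde P$ in the way Lemma~\ref{fixingL_lemma} demands.
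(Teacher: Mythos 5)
Your proposal is correct and follows essentially the same route as the paper: both reduce the statement to Lemma~\ref{fixingL_lemma} by passing to the $d$-th power, observing that the two candidate definitions differ by finitely many Euler factors, and factoring each $1-\varepsilon (N\mf p)^{-s}$ into linear factors $\prod_\mu (1-\varepsilon_\mu p^{-s})$ over the rational prime $p$ below $\mf p$. Your verification of hypothesis (i) via products of Hecke's functional equations is exactly what the paper's surrounding remarks supply, so nothing is missing.
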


\begin{proof}
Let $\left\{ x_i^{\sigma}\right\}$ and $\left\{ \tilde x_i^{\sigma} \right\}$ be two solutions to (\ref{E24}), where we use $j$ for $k$ in (\ref{E24}).
Fix a positive integer $d$ such that each~$ x_i^\sigma d$ and $ \tilde x_i^\sigma d$ is in~$\bZ$ and so $L(\chi^j, s)^d$ is meromorphic 
whether we use the $x_i^{\sigma}$ or the $\tilde x_i^{\sigma}$  to define $L(\chi^j, s)$.
Consider $L_1(s), L_2(s)$ be equal to $L(\chi^j, s)^d$ according to the two expressions 
given by $x_i^{\sigma}$ and $\tilde x_i^{\sigma}$ respectively. 
Note that $L_1(s)$ and $L_2(s)$ agree up to a finite number of Euler factor, and the Euler factors 
where they differ are powers of terms of the form
$$
\frac{1}{1 - \varepsilon N(\mf p)^{-s}}
$$
for some prime ideal $\mf p$ in some number field and some root of unity $\varepsilon$.
So $N(\mf p) = p^l$ for some prime $p\in \bZ$. By factoring the polynomial $1 - \varepsilon X^l$ into linear factors,
we get the following:
$$
\frac{1}{1 - \varepsilon N(\mf p)^s} = \prod_{\mu =1}^l \frac{1}{1 - \varepsilon_\mu p^{-s}}
$$
where each $\varepsilon_\mu$ is a root of unity.

With these ideas we can verify that $L_1$ and $L_2$ satisfy the requirements of the above lemma. So $L_1 = L_2$.
This implies that the two definitions of $L(\chi^j, s)$ differ only by a $d$th root of unity factor.
\end{proof}

\begin{remark}
Note that the above lemma also implies that (\ref{E23}) is an exact equation (up to root of unity), a fact that Artin uses in the calculation of $l_i^{(1)}$.
\end{remark}

}

}

To determine $l_i^{(1)}$ explicitly in the functional equation we use (\ref{E23}) and examine the exponent of $\Gamma(s)$
appearing in the functional equation. This is carried out in the following lemma. In the Abelian case
the exponent of $\Gamma$ is $[k: \bQ]$, and the following lemma shows how to generalize this to the non-Abelian case.

\begin{lemma}
The exponent of $\Gamma(s)$ appearing in the functional equation of $L(s, \chi^i)$ is equal to $f_i [k:\bQ]$ where $f_i$ is the degree
of the representation associated to $\chi^i$. In other words $f_i = \chi^i(1)$.
\end{lemma}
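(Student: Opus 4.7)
The plan is to read off $l_i^{(1)}$ from the factorization~(\ref{E23}) by matching $\Gamma(s)$-exponents on both sides, and then isolate each individual~$l_i^{(1)}$ via the orthogonality argument already developed in this chapter.

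First I would observe that $L(s, \psi_i^{(\sigma)}; \Omega_\sigma)$ is a one-dimensional Hecke L-series over $\Omega_\sigma$ (by Satz~2 applied to the Abelian extension $K/\Omega_\sigma$), so by Hecke's theorem its functional equation carries $\Gamma(s)^{[\Omega_\sigma : \bQ]}$ --- one $\Gamma$ per real place and two per complex place of $\Omega_\sigma$, summing to $[\Omega_\sigma : \bQ]$. Matching against the $\Gamma$-exponent $\sum_\nu r_{i\nu}^{(\sigma)} l_\nu^{(1)}$ coming from the product on the right of~(\ref{E23}) gives
$$[\Omega_\sigma : \bQ] = \sum_{\nu=1}^x r_{i\nu}^{(\sigma)} l_\nu^{(1)} \qquad (\ast)$$
for every $\sigma \in G$ and every $i \in \{1, \ldots, m(\sigma)\}$.

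Next I would derive a parallel degree identity. The character induced from $\mf g^\sigma$ to $G$ in~(\ref{E7}) has degree $[G : \mf g^\sigma] = [\Omega_\sigma : k]$, so specializing~(\ref{E7}) at the identity gives $[\Omega_\sigma : k] = \sum_\nu r_{i\nu}^{(\sigma)} f_\nu$. Multiplying by $[k:\bQ]$ and subtracting from $(\ast)$ produces
$$\sum_{\nu=1}^x r_{i\nu}^{(\sigma)} y_\nu = 0, \qquad y_\nu \defeq l_\nu^{(1)} - f_\nu [k:\bQ],$$
for every $\sigma$ and $i$. I would then execute Artin's orthogonality trick: fix $\sigma$ and $\tau \in \mf g^\sigma$, multiply by $\psi_i^{(\sigma)}(\tau)$, and sum over $i = 1, \ldots, m(\sigma)$ (now including $i = 1$, which is a genuine simplification over the proof that~(\ref{E25}) has only the zero solution). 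Formula~(\ref{E6}) collapses the inner sum to $\chi^\nu(\tau)$, yielding $\sum_\nu y_\nu \chi^\nu(\tau) = 0$. Since every $\tau \in G$ lies in $\mf g^\tau$, this holds for all $\tau \in G$; multiplying by $\chi^i(\tau^{-1})$, summing over $\tau$, and applying~(\ref{E2}) gives $n y_i = 0$, hence $l_i^{(1)} = f_i [k:\bQ]$.

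The main delicate step is matching $\Gamma$-exponents in $(\ast)$ when $L(s, \chi^\nu)$ is only defined up to multivalued branches determined by a rational solution to~(\ref{E24}). This is handled by passing to a common denominator $d$ and working with the genuine meromorphic identity for $L(s,\chi^\nu)^d$: the (rational) $\Gamma$-exponents then add up as expected on both sides, and dividing by $d$ recovers $(\ast)$. After that the argument is pure orthogonality and the conclusion is immediate.
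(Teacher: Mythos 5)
Your proposal is correct and follows essentially the same route as the paper: match the $\Gamma(s)$-exponents across~(\ref{E23}) to get $\sum_\nu r_{i\nu}^{(\sigma)} l_\nu^{(1)} = [\Omega_\sigma:\bQ]$, then invert this system with the two character orthogonality relations. The only cosmetic difference is that you first subtract the degree identity $\sum_\nu r_{i\nu}^{(\sigma)} f_\nu = [\Omega_\sigma:k]$ to make the system homogeneous in $y_\nu = l_\nu^{(1)} - f_\nu[k:\bQ]$, whereas the paper keeps the inhomogeneous term and evaluates $\sum_i \psi_i^{(\sigma)}(\tau)$ via~(\ref{E3}); your remark on handling the branch ambiguity by passing to the $d$-th power matches the paper's own remark on the exactness of~(\ref{E23}).
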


\begin{proof}
By equation (\ref{E23}) we see that the exponent of $\Gamma(s)$ appearing in
the functional equation of $L\bigl(s, \psi_i^{(\sigma)} \bigr)$ is equal to 
$$
\sum_{\nu = 1}^x r_{i \nu}^{(\sigma)} l_\nu^{(1)}
$$
but from the functional equation for Abelian $L$-series we know that this exponent should be the degree $[\Omega_\sigma: \bQ]$. Thus,
for each $\sigma \in G$ and $i = 1, \ldots, m(\sigma)$,
$$
\sum_{\nu = 1}^x r_{i \nu}^{(\sigma)} l_\nu^{(1)} = [\Omega_\sigma: \bQ] = [k: \bQ]  \frac{|G|}{m(\sigma)}.
$$
Multiply by $\psi_i^{(\sigma)} (\tau)$ with $\tau \in \mf g^\sigma$, and sum over $i$:
$$
\sum_{i=1}^{m(\sigma)} \sum_{\nu = 1}^x  l_\nu^{(1)} r_{i \nu}^{(\sigma)} \psi_i^{(\sigma)} (\tau) 
= [k: \bQ]  \frac{|G|}{m(\sigma)} \sum_{i=1}^{m(\sigma)} \psi_i^{(\sigma)} (\tau).
$$
Using (\ref{E6}) on the left and (\ref{E3}) on the right, this equation simplifies as 
$$
\sum_{\nu = 1}^x  l_\nu^{(1)}\chi^{\nu} (\tau) 
= [k: \bQ] |G|  \, \varepsilon_{\tau}
$$
where $\varepsilon_\tau$ is $1$ or $0$ depending on whether $\tau = 1$ or $\tau \ne 1$.

The above equation is independent of $\sigma$, and so applies to all $\tau \in G$.
Now multiply by $\chi^i(\tau^{-1})$ and sum over $\tau \in G$:
$$
\sum_{\tau \in G} \sum_{\nu = 1}^x  l_\nu^{(1)} \chi^{\nu} (\tau) \chi^i(\tau^{-1})
\! = \sum_{\tau \in G} [k: \bQ] |G|  \, \varepsilon_{\tau} 
 \chi^i(\tau^{-1}) =  [k: \bQ] |G|  \,  \chi^i(1) =  [k: \bQ] |G| f_i
$$
but the left-hand simplifies by (\ref{E2}) to give $|G| \, l_i^{(1)}$. So $l_i^{(1)}|G|  = f_i [k: \bQ] |G|$.
In other words, $l_i^{(1)}  = f_i [k: \bQ]$.
\end{proof}

We can determine some of the other constants in the functional equation in a similar manner.

{\color{blue}
\begin{remark}
In the above proof Artin regards equation (\ref{E23}) not as an equation valid up to a finite number of Euler factors but as an exact
equation (or at least up to multiplication by a root of unity). This is justified based because both
sides of (\ref{E23})  satisfy the right type of functional equations (see Lemma~\ref{fixingL_lemma}).

Here is another proof of the above lemma that might be of interest; it does not use the strong version of (\ref{E23}). 
We begin with a special case of~(\ref{E7}):
\begin{equation*}
\chi_{\psi^{(\sigma)}_i} (1) = \sum_{\nu =1}^x r_{i \nu}^{(\sigma)} \chi^{\nu} (1) = \sum_{\nu =1}^x r^{(\sigma)}_{i \nu} f_\nu
\end{equation*}
for each $\sigma$ and each $i = 1, \ldots, m(\sigma)$.
But the degree of the induced representation is just the index $[G: \mf g^{\sigma}]$ so
$$
\sum_{\nu =1}^x r^{(\sigma)}_{i \nu} f_\nu = \chi_{\psi^{(\sigma)}_i} (1) = [G: \mf g^{\sigma}] =  \frac{|G|}{m(\sigma)}.
$$
In what follows let $x_i^{\sigma}$ a solution to (\ref{E24}), (where we use $j$ for $k$ in that equation). Then by 
the previous equation and (\ref{E24})
\begin{eqnarray*}
 \sum_{\sigma\ne 1} \sum_{i=2}^{m(\sigma)}  x_i^\sigma \frac{|G|}{m(\sigma)}
&=&
 \sum_{\sigma\ne 1} \sum_{i=2}^{m(\sigma)} \sum_{\nu =1}^x  x_i^\sigma r^{(\sigma)}_{i \nu} f_\nu\\
 &=&
 \sum_{\nu =1}^x \left( \sum_{\sigma\ne 1} \sum_{i=2}^{m(\sigma)}   x_i^\sigma r^{(\sigma)}_{i \nu} \right) f_\nu\\
  &=&  \sum_{\nu =1}^x    \delta_{j\nu} f_\nu\\
    &=& f_j.
\end{eqnarray*}

With this identity we can easily calculate the exponent of the expression
$$ \left( \frac{2}{(2\pi)^s} \right) \Gamma(s)$$
in the functional equation for $L(s, \chi^j)$.\footnote{\color{blue} As usual, if it  makes matters clearer take a power $L(s, \chi_i)^d$
that is meromorphic on $\bC$ instead of dealing with branches. It is clear how to adapt this argument to such a power.}
From Hecke's functional equation for Abelian $L$-series we have that the contribution from each $L\bigl(s, \psi_i^{(\sigma)} \bigr)$ is equal to 
$$
[\Omega_{\sigma}: \bQ] = [k: \bQ] \; \frac{|G|}{m(\sigma)},
$$
and so the total contribution for $L(s, \chi^j)$ is
$$
\sum_{\sigma\ne 1} \sum_{i=2}^{m(\sigma)} {x_i^\sigma } [\Omega_{\sigma}: \bQ]
=[k:\bQ]  \sum_{\sigma\ne 1} \sum_{i=2}^{m(\sigma)}  x_i^\sigma  \frac{|G|}{m(\sigma)} = [k:\bQ] f_j.
$$

We can argue similarly for the part of the function equation of $L(s, \chi^j)$ coming from factors of the type~$(N(\mf f) |\Delta_k|)^{s-1/2}$
from the Abelian $L$-series factors.
For each~$L\big(s, \psi_i^{(\sigma)} \big)$ we can write this factor as $(B_{i}^\sigma |\Delta_\sigma|)^{s-1/2}$
where $|\Delta_{\sigma}|$ is the absolute discriminant of the field $\Omega_\sigma$ and $B_{i}^\sigma$ is a positive integer. But 
$$
|\Delta_\sigma| = N_\sigma  |\Delta_k|^{[\Omega_\sigma: k]}$$
where $\Delta_k$ is the discriminant of $k$ and $N_\sigma$ is some positive integer.\footnote{\color{blue} 
This is a standard result in algebraic number theory. See, for instance, Neukirch~\cite{Neukirch1999}, Corollary 2.10, page 202. }
So we can write
$$
(B_{i}^\sigma |\Delta_\sigma|)^{s-\frac{1}{2}}
= (B_{i}^\sigma N_\sigma)^{s-\frac{1}{2}}  \left(|\Delta_k|^{s-\frac{1}{2}}\right)^{[\Omega_\sigma: k]}.
$$
In the functional equation of $L(s, \chi^j)$ the terms $(B_{i}^\sigma N_\sigma)^{s-1/2}$ combine to give
$$
\left( \prod_{\sigma \ne 1} \prod_{i=2}^{m(\sigma)} \left( B_{i, \sigma} N_\sigma \right)^{x_i^\sigma}\right)^{s-\frac{1}{2}}
$$
which in Artin's notation is $\alpha_j^{s-\frac{1}{2}}$. Observe that $\alpha_i$ is the product of rational powers of positive integers.

The exponent of $|\Delta_k|^{s-1/2}$ in the functional equation of $L(s, \chi^j)$ will be given by
$$
\sum_{\sigma\ne 1} \sum_{i=2}^{m(\sigma)} {x_i^\sigma} [\Omega_{\sigma}: k]
= \sum_{\sigma\ne 1} \sum_{i=2}^{m(\sigma)}  x_i^\sigma \frac{|G|}{m(\sigma)} =  f_j
$$
where we have used the formula for $f_j$ established above. Thus we get the discriminant factor~$\left(|\Delta_k|^{f_j}\right)^{s-\frac{1}{2}}$.
\end{remark}
}
\medskip

All in all, we get the following:

\begin{satz}
The primitive $L$-series $L(s, \chi^i)$ can be analytically continued to the whole plane aside from a possibly discrete
set of branch points. The  orders of the branch points are (unformly) bounded.\footnote{\color{blue} In other words, $L(s, \chi^i)$ can be meromorphically continued on some Riemann surface covering~$\bC$ of finite degree $d$. In fact, $L(s, \chi^i)^d$ can be meromorphically continued on $\bC$ itself for some positive power $d$.} For $i>1$ 
the continuation of (each branch of)~$L(s, \chi^i)$ is holomorphic and nonzero in a neighborhood of $s=1$.
There are zero-free neighborhoods of the line~\text{$\Re (s)  = 1$}, including a region on the plane
defined by $\sigma \ge 1 - c/\log t$ for some constant $c>0$ (here we write a complex  number as $s = \sigma + i t$ with $\sigma, t \in \bR$).
These \text{$L$-functions} satisfy a functional equation of the form:
\begin{equation} \label{E26}
\frac{L (1-s, \overline \chi^i)}{L(s, \chi^i)}
=
\varepsilon_i
 \left( \frac{2}{(2\pi)^s}\right)^{mf_i}
  \left( \alpha_i |\Delta_k|^{f_i}\right)^{s-\frac{1}{2}}
  \left( \cos \frac{s\pi}{2}\right)^{l_i^{(2)}}
  \left( \sin \frac{s\pi}{2}\right)^{l_i^{(3)}}
 \left( \Gamma(s)\right)^{m f_i}.
\end{equation}
where $\Delta_k$ is the discriminant of $k$, $\alpha_i$ is a product of rational powers of (rational) positive integers,  $\varepsilon_i$
are algebraic integers that depends only on the branch under consideration with $|\varepsilon_i| = 1$, $m = [k:\bQ]$, and $f_i = \chi^i(1)$ is the
degree of the representation associated to the character $\chi^i$. 
Furthermore, $l_i^{(2)}$ and $l_i^{(3)}$ are rational numbers.
\end{satz}

With these types of methods (``Auf demselben Wege'') it should also be possible to establish the single-valuedness of our functions,
of which one can easily convince oneself in special cases. At least one can prove that the branching orders are divisible only by primes dividing $|G|$.

Completely new methods will probably be needed to show that our $L$-Series are analytic on all of $\bC$ (aside from the $L$-series associated
to the trivial character (Hauptcharakter)).

{\color{blue}
\begin{remark}
As mentioned above, the methods of this paper show that $L(s, \chi^i)^d$ is entire for some positive integer $d$. In other words, $L(s, \chi^i)$
can be regarded as a $d$-valued function. Artin mentions here that it should be possible to prove that~$d = 1$ is possible (``die Eindeutigkeit unseren Funktionen''), in other words that $L(s, \chi^i)$
is meromorphic on the whole plane. 
His next sentence means that we should at least be able to find a $d$ such that the only primes dividing $d$ are divisors of $|G|$.
The former claim would have to wait until 1947 when it was proved by R. Brauer~\cite{Brauer1947a},
but later claim is, as Artin says, fairly easy to show: see the following remark.

It is still an open problem however on whether the Artin $L$-series is analytic in general. It has been shown in some cases by Langlands and Tunnell, and
these cases were used by Wiles in his proof of Fermat's Last Theorem. 
\end{remark}

\begin{remark}
We now outline an argument for Artin's claim on branching orders for primes $p$ not dividing the order~$|G|$.
Recall that as part of the proof of the solvability of~(\ref{E24}), Artin shows that 
the matrix~$(r_{i \nu}^{(\sigma)})$ has linearly independent rows. We can reduce this matrix modulo $p$,
and by working in a suitable extension $\bF_q$ of $\bF_p$ (containing roots of unity of order $|G|$) we can mimic the proof
given above for $\bQ$ and show that it also works over $\bF_p$  as long as $|G|$ is
not zero modulo $p$. 

Once we know that the matrix $(r_{i \nu}^{(\sigma)})$ has linearly independent rows modulo~$p$, we can find an $x-1$ by $x-1$ submatrix whose mod $p$ reduction is
nonsingular. In other words, we can find a  $x-1$ by $x-1$ submatrix whose determinant is an integer not divisible by $p$. 
We can then
find a solution to (\ref{E23}) in terms of rational numbers whose denominators are not divisible by $p$. This gives a $d_p$ sheeted cover of $\bC$
such that $L(s, \chi^i)$  is meromorphic on the cover. 

In particular, if one goes around a branch point of $L(s, \chi^i)$ then the value will change value by a multiplicative factor that is a $d_p$-root of unity. In other words, the order of the branch is relatively prime to $p$.
This applies to all primes not dividing~$|G|$ as one goes around a branch point. Let $d$ be the GCD
of all the~$d_p$. Going around any branch point changes the value by a $d$-th root of unity, so
$L(s, \psi^i)^d$ descends to a meromorphic function on $\bC$, and at the same time the only primes dividing $d$
are primes dividing $|G|$.
\end{remark}
}

\chapter{ Conjecture of Frobenius (now called the Chebotaryov Density Theorem)}

With the the result just derived  one can easily confirm a conjecture of Frobenius using Formula (\ref{E12}).\footnote{See
\S 5, Formulas (16) and (18)
of the 1896 work of Frobenius cited in footnote~\ref{FrobFoot}. }

\color{blue}

\begin{remark}
This density conjecture of Frobenius that Artin proves here is what we today call the \emph{Chebotaryov (or Chebotarev) density theorem}.\footnote{\color{blue}
Nikolai Chebotaryov (1894--1947) was a mathematician from Ukraine and Russia. The spelling ``Chebotaryov'' is a transliteration of the Ukrainian version of his name, while
``Chebotarev'' is a transliteration of the Russian version. He was born in Ukraine and was educated at Kyiv University. He later became a professor at Kazan University in Russia in 1928
where he spent the remainder of his career.}
Unbeknownst to Artin, Nikolai Chebotaryov had already proved this result about a year
earlier in~1922
without using these new $L$-series. Artin gives a proof here, but it is requires  Artin's reciprocity (Satz~2)  in order
to be assured that Satz~3 holds. Satz 2 was not fully proved until 1927 when Artin proved his reciprocity law. It is interesting to note that Artin's  1927 proof
of his reciprocity law was inspired by the 1925 German versions of Chebotaryov's  proof of this density theorem that Artin read only after he completed the current paper.
\end{remark}

\color{black}

Not only can you derive the conjecture results, but you can also sharpen them without effort.
From formula  (\ref{E12})
for $\log L(s, \chi)$
it follows from known methods that 
\begin{equation}\label{E27}
\sum_{N \mf p \le x}
\chi^i (\mf p)
=
\delta_{1 i} \mathrm{Li}(x) + O\left( x e^{-a\sqrt{\log x}}\right),
\end{equation}
where $\delta_{11} = 1$, but otherwise $\delta_{1i}=0$.

{
\color{blue}
\begin{remark}
By ``known methods'', Artin is presumably referring to  a combination of methods used to prove Dirichlet's theorem
together with those needed for the the prime number theorem generalized to number fields.
As usual, the error term can be greatly improved 
if one assumes the generalized Riemann hypothesis.

For example, a classical form of the  prime number theorem is that
\begin{equation*}
\pi(x)
=
 \mathrm{Li}(x) + O\left( x e^{-a\sqrt{\log x}}\right),
\end{equation*}
for some $a>0$. (See for instance Theorem 6.9, page 179 of \cite{ MV2007}).
Here $\pi(x)$ is the number of primes in $\bZ$ less than $x$
and 
$$
 \mathrm{Li}(x) = \int_{2}^x \frac{1}{\log t} dt.
$$
The proof of the prime number theorem uses a zero-free region for $\zeta(s)$ similar to that described in Satz~3 for Artin $L$-functions.
(See Theorem~6.6, page 172 of \cite{ MV2007} for the classical zero-free region).
\end{remark}
}

For a real number $x$ and a conjugacy class $C$ of $G$, 
let~$\pi(x, C)$ be the number of prime ideals $\mf p$ of $k$ with $N\mf p \le x$
whose Frobenius class is $C$.

We multiply (\ref{E27}) by $\chi^i(\sigma^{-1})$ where $\sigma \in C$, and sum over $i$. From (\ref{E3})
we get
$$
\frac{|G|}{|C_r|}\, \pi(x, C_r) = \sum_{i=1}^x \sum_{N \mf p \le x} \chi^i (\sigma^{-1}) 
\chi^i (\mf p) = \mathrm{Li}(x) + O\left( x e^{-a\sqrt{\log x}}\right).
$$

\begin{satz}
For a real number $x$ and a conjugacy class $C$ of $G$, 
let~$\pi(x, C)$ be the number of prime ideals $\mf p$ of $k$ with $N\mf p \le x$
whose Frobenius class is $C$.  Then
\begin{equation} \label{E28}
\pi(x, C) = \frac{|C|}{|G|} \mathrm{Li}(x) + O\left( x e^{-a\sqrt{\log x}}\right).
\end{equation}
So the density of prime ideals in the class $C$ is equal to the density of $C$ in $G$.
In particular, in each class $C$ there is an infinite number of prime ideals whose Frobenius class is $C$.
\end{satz}

This theorem is a generalization of Dirichlet's theorem concerning primes in an arithmetic progression, 
which (with the help of our general reciprocity law) can be seen to be a special case.\footnote{\color{blue}Although 
Satz 2 is not fully proved
in this paper, it is proved  in  special cases including that of $\bQ(\zeta) / \bQ$. When we work out the class field theory for $\bQ(\zeta)/\bQ$, we find  that Artin's reciporocity gives a correspondence between
the set of primes of $\bQ$ with a fixed Frobenius in the Galois group of~$\bQ(\zeta)/\bQ$ and the set of primes of $\bQ$
in a certain arithmetic progression. So Satz 5 applied to the fields $\bQ(\zeta) / \bQ$ is really just Dirichlet's theorem.
}
Its true meaning has yet to be clarified (``Seine wahre Bedeutung harrt noch der Aufkl\"arung'').

\chapter{Multiplicative Relations Between $L$-Series}

\begin{satz}
If the base field $k$ is $\bQ$ then there are no multiplicative relations between the primitive $L$-Series.
\end{satz}

\begin{proof}
Suppose $x_i$ are integers such that
$$\prod_{i = 1}^x \left( L(s, \chi^i) \right)^{x_i} = 1.$$
Then by (\ref{E12}), with $k=\bQ$,
\begin{equation*}
 \log L(s, \chi^i) = \sum_{p^\nu} \frac{\chi^i(p^\nu)}{\nu p^{\nu s} }
\end{equation*}
where the sum is over all prime powers $p^\nu > 1$.
So when we sum over the $\chi^i$ we get
$$
\sum_{p^\nu} \left(  \sum_{i=1}^x x_i  {\chi^i(p^\nu)}\right) \frac{1}{\nu {p^{\nu s}} } = 0.
$$

{\color{blue}
\begin{remark}
One can tentavely think of the above equality as holding modulo~$2\pi i \bZ$. But in any case the right hand side is a constant
on the connected set~$\Re(s)>1$. Since the left hand side is a Dirichlet series with  constant term $0$ this forces the right hand side to be $0$ as asserted
(by the uniqueness of coefficients of a Dirichlet series).
\end{remark}
}

By the uniqueness of the coefficients of a Dirichlet series we have
$$
 \sum_{i=1}^x x_i  {\chi^i(p)} = 0
$$
for all primes $p$ (and in fact, the prime power $p^\nu$ coefficients vanish as well).
Recall from (\ref{E10}) that~$\chi^i(p)$ denotes to the value of~$\chi^i$ at the Frobenius of $p$.

By Satz 4, each conjugacy class $C$ of $G$ is the Frobenius class for an infinite number of primes $p$. So
$$
 \sum_{i=1}^x x_i  {\chi^i(\tau)} = 0
$$
for all $\tau \in G$. This implies, in the usual way, that $x_i = 0$ for all $i$.
\end{proof}

{\color{blue}\begin{remark}
The last step is just due to the linear independence of characters. This can be shown using (\ref{E2}).
\end{remark}

\begin{remark}
We assumed $x_i$ were integers in the above proof since that is the main case under consideration, but we can let $x_i$ be complex and use the above argument
to show that the functions~$\log L(s, \chi^1), \ldots,  \log L(s, \chi^x), 1$ are linearly independent over $\bC$.
\end{remark}
}

Satz 5 is not valid for general algebraic number fields $k$ since conjugate prime ideals can undermine the result.
In fact, one can easily construct examples (even with~$[k: \bQ]=2$) in which conjugate characters give rise to the same $L$-series.
(In fact, we will see some examples in Section~\ref{S9} where different characters of a given Galois group can give rise to the same $L$-series).

Based on Satz 5 
we see how to find all the relationships between any finite collection of $\zeta$-functions or $L$-series. 
Find a Galois extension
$E$ of $\bQ$ that contains all the field extensions $K/k$ used to define the zeta and $L$-series that you are interested in. 
We can consider all of our given functions as being defined using characters for~$E/\bQ$,
and all of these can be expressed in terms of primitive $L$-series for $E/\bQ$, which are independent by Satz 5.
We can use elimination to find all the relations between our functions because any additional relations are ruled out by Satz 5.
The remark at the end of Section~2 shows we do not necessarily have to transition to a common $E$
to get our decompositions since extending the field
does not change the decomposition. (The common field was mainly used to prove uniqueness). 
So we have reached a conclusion to the problem of determining multiplicative relations.

\color{blue}
\begin{remark}
Here Artin to the independence of the common Galois extension $E$. If you wish to break the dependency on a common 
Galois $E/\bQ$, you would want a way to identify when two primitive $L$-series
for $E_1/\bQ$ and $E_2/\bQ$ are equal. One way is to agree to classify each primitive $L$-series by the minimal Galois extension $E/\bQ$
for which it arises. In other words, consider only irreducible \emph{faithful} representations of Galois groups with base field~$\bQ$.
Note that we have independence for the infinite collection of such primitive $L$-series over $\bQ$ (and the $\bC$-linear independence
for their repective logarithmic functions). When we combine this section with the results of Section 2 we have the following result:
\end{remark}

\begin{corollary}
Every Artin $L$-series factors uniquely as the product of primitive~$L$-series defined over $\bQ$.
\end{corollary}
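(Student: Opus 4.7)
The plan is to prove existence first and then uniqueness, using the machinery already in place: the remark at the end of Section~2 (for embedding into a Galois extension of $\bQ$), Satz~1 (to convert induced characters into changes of base field), equation~(\ref{E14}) (for factoring according to a character decomposition), and Satz~5 (for the rigidity of primitive $L$-series over $\bQ$).

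For existence, I would start with an arbitrary Artin $L$-series $L(s, \chi; k)$, where $\chi$ is a character of $G = \mathrm{Gal}(K/k)$. First I would invoke the remark at the end of Section~2 to choose a finite extension $\Omega \supseteq K$ that is Galois over~$\bQ$, and use the compatibility there (together with Lemma~\ref{triple_lemma}) to inflate $\chi$ to a character of $\mathrm{Gal}(\Omega/k)$ without changing the $L$-series. Then applying Satz~1 to the inclusion $\mathrm{Gal}(\Omega/k) \hookrightarrow \mathrm{Gal}(\Omega/\bQ)$ yields $L(s, \chi; k) = L(s, \chi_\chi; \bQ)$, where $\chi_\chi$ is the induced character on $\mathrm{Gal}(\Omega/\bQ)$. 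Finally I would decompose $\chi_\chi$ into the simple characters of $\mathrm{Gal}(\Omega/\bQ)$ using~(\ref{E1}) and apply~(\ref{E14}) to obtain the desired factorization into primitive $L$-series over $\bQ$ (identified, as in the remark preceding the corollary, by the minimal Galois extension through which the character factors faithfully).

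For uniqueness, suppose we have two factorizations of the same $L$-series as products of primitive $L$-series over $\bQ$. Collecting all the minimal Galois extensions appearing, I would form their compositum $E$, which is again Galois over $\bQ$. By inflation (which preserves irreducibility and does not change the $L$-series), every primitive $L$-series occurring in either factorization is of the form $L(s, \chi^i; \bQ)$ for some simple character $\chi^i$ of $\mathrm{Gal}(E/\bQ)$. Moreover, distinct faithful primitive $L$-series on minimal fields inflate to distinct simple characters of $\mathrm{Gal}(E/\bQ)$, since a faithful irreducible character is recovered from its inflation by passing to the quotient by the kernel. Comparing the two factorizations therefore yields a multiplicative relation $\prod_i L(s, \chi^i; \bQ)^{a_i - b_i} = 1$ among primitive $L$-series for $E/\bQ$, and Satz~5 forces $a_i = b_i$ for every~$i$.

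The main obstacle, I expect, will be handling the identification of primitive $L$-series across different minimal fields cleanly: one must verify that the map from (faithful simple characters of Galois groups over $\bQ$) to ($L$-series) is injective, so that ``primitive $L$-series over $\bQ$'' is a well-defined set rather than an ambiguous one, and one must check that inflation from a minimal field to $E$ produces distinct simple characters of $\mathrm{Gal}(E/\bQ)$ for distinct primitives. Both points reduce cleanly to basic Galois theory plus Satz~5, but they are the places where the bookkeeping needs real care.
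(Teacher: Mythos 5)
Your proposal is correct and follows essentially the same route as the paper: existence via the remark at the end of Section~2 together with Satz~1 and equations~(\ref{E1}), (\ref{E14}), and uniqueness by passing to a common Galois extension $E/\bQ$ and invoking Satz~5. The bookkeeping points you flag (injectivity of the character-to-$L$-series map and distinctness under inflation) are exactly the content of the paper's preceding remark on classifying primitive $L$-series by faithful irreducible representations on minimal fields.
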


\begin{remark}
Above Artin mentions using elimination to find relations. This essentially means using commonplace matrix manipulations on integral matrices.
We describe this in more detail.

Suppose $\ell_1, \ldots, \ell_m$ gives a collection of $L$-series, and $L_1, \ldots, L_t$ are all the primitive~$L$-series
defined over $\bQ$ that arises in the decompositions of $\ell_1, \ldots, \ell_m$. Then we can identify each $\ell_i$
with an element of $\bZ^t$ through the exponents of its decomposition. Consider the $\bZ$-module homomorphism
$$
\Phi\colon \bZ^m \to \bZ^t
$$
sending $(c_1, \ldots, c_m)$ to the element of $\bZ^t$ associated to $\ell^{c_1}_1 \cdots \ell_m^{c_m}$.
Then the kernel of $\Phi$ is a free $\bZ$-module of rank bounded by $m$. The elements in this kernel
give us our relations between $\ell_1, \ldots, \ell_m$, and Satz 5 assures us that these are all the relations.
We can concretely calculate a basis for the kernel, i.e. identify all fundamental relations for $\ell_1, \ldots, \ell_m$, by using
row and column operations on the matrix representing~$\Phi$ to identify the kernel. 
The matrix representing $\Phi$ can be written down as soon as we have decomposed each $\ell_i$: its $j$th column is the exponents
occurring in the decomposition of $\ell_i$
(assuming we multiply the matrix on the left). For example, using (\ref{E29}) below, the fundamental relations among $\zeta, \zeta_5, \zeta_6, \zeta_{10}, \zeta_{12}, \zeta_{15}, \zeta_{20}, \zeta_{30}, \zeta_{60}$ discussed there can be calculated from calculating the kernel of the following:
$$
\begin{bmatrix}
1 & 1 & 1 & 1 & 1 & 1 & 1 & 1 & 1 \\
0 & 0 & 0 & 0 & 1 & 0 & 1 & 1 & 3 \\
0 & 0 & 0 & 0 & 1 & 0 & 1 & 1 & 3 \\
0 & 1 & 0 & 1 & 0 & 1 & 2 & 2 & 4 \\
0 & 0 & 1 & 1 & 1 & 2 & 1 & 3 & 5
\end{bmatrix}.
$$
For example, the column vector $(2, 0, -2, 0, 0, 0, -1, 1, 0)$ is in the kernel and
so gives the relation $\zeta^2 \zeta_6^{-2} \zeta_{20}^{-1} \zeta_{30} = 1$.
\end{remark}

\color{black}

\chapter{Applications to Icosahedral Fields} \label{S9}

Finally we apply these results to icosahedral extensions, the simplest extensions that cannot be obtained through a series of Abelian extensions.
Let $K/k$ be a Galois extension of number fields with Galois group $G$ isomorphic to the icosahedral group.
Observe that Satz 2 holds for intermediate Abelian extensions $K'/k'$. To see this 
observe that Abelian groups of the form $H_1/H_2$, where $H_1$ is a subgroup of $G$ and~$H_2$ is a normal subgroup of $H_1$,
have order dividing $60=2^2 \cdot 3 \cdot 5$. 
The $p$-power part of such a group is a cyclic group of order dividing $p$ for $p=3, 5$, and
the~2-power part of such a group is either cyclic of order dividing $2$, or is the 
(Klein) four groups (``Vierergruppe") since $G$ has no elements of order 4.
(Satz 2 has been proved for Abelian Galois groups that are  products of cyclic groups of prime order.)

{\color{blue}
\begin{remark}
The group $G$ of symmetries of the icosahedron is isomorphic to the alternating group $A_5$, which is a simple group of order 60. 
The subgroups of $A_5$ include cyclic subgroups of the following orders: 1, 2, 3, 5. 
There are also Klein four groups, and dihedral subgroups of order~6 and 10. Finally there are subgroups isomorphic to~$A_4$, and of course $A_5$ itself. 
So there are intermediate fields of degree~$1, 5, 6, 10, 12, 15, 20, 30$ and $60$ over $k$.
Note that two subgroups $A_5$ of the same order are actually conjugate, and so are isomorphic.
This implies that two intermediate subfields of $K/k$ of the same degree over $k$ must be isomorphic, and so have equal zeta functions.
Artin uses the notation $\Omega_n$ for a field of degree $n$ over~$k$, and~$\zeta_n$ for its zeta function. We let $\zeta$ be the zeta function of 
the base field~$k$, so~$\zeta = \zeta_1$.
\end{remark}}

In $G$ we have 5 conjugacy classes $C_1, C_2, C_3, C_4, C_5$ with $1, 15, 20, 12, 12$ elements respectively.  The densities of prime ideals in these classes must be 
$$
\frac{1}{60}, \frac{1}{4}, \frac{1}{3}, \frac{1}{5}, \frac{1}{5}.
$$
Furthermore, by the theory of characters developed by Frobenius, we have five simple characters of $G$ and their degrees are $1, 3, 3, 4, 5$. We call the associated
primitive $L$-series~$\zeta$, $L_3^{(1)}, L_3^{(2)}, L_4, L_5$.

We easily get the following factorizations using our methods (where the index refers to the degree of the field over $k$):

\begin{eqnarray} \label{E29}
\zeta_5 & = & \zeta \; L_4 \\
\zeta_6 & = & \zeta\; L_5 \nonumber\\
\zeta_{10 }& = & \zeta\; L_4 L_5 \nonumber\\
\zeta_{12} & = & \zeta \;L_3^{(1)}  L_3^{(2)} L_5   \nonumber\\
\zeta_{15} & = & \zeta\; L_4 \, (L_5)^2 \nonumber\\
\zeta_{20} & = & \zeta \; L_3^{(1)}  L_3^{(2)} \left( L_4\right)^2 L_5\nonumber\\
\zeta_{30} & = & \zeta \; L_3^{(1)}  L_3^{(2)} \left( L_4\right)^2 \left( L_5 \right)^3 \nonumber\\
\zeta_{60} & = & \zeta \; ( L_3^{(1)}  L_3^{(2)} )^3 \left( L_4\right)^4 \left( L_5 \right)^5 \nonumber
\end{eqnarray}

\bigskip

{\color{blue}
\begin{remark}
Verifying (\ref{E29}) is an exercise. One way  to verify it is to do the following:
\begin{itemize}
\item
Identify all subgroups of $A_5$, and the size of the intersections with each of the conjugacy classes $C_1, \ldots, C_5$.
\item
Derive explicit formulas for the five simple characters of $A_5$.
\item
Use  Frobenius reciprocity to calculate the induced characters of trivial characters in terms of the irreducible characters of $A_5$.
\item
Use (\ref{E14}) and Satz 1.
\end{itemize}
\end{remark}}

\begin{proposition} \label{prop_4}
If $G = \mathrm{Gal}(K/k)$ is the icosahedral group, then all the $L$-series associated to representations of $G$ are meromorphic. In other words,
they are single valued (outside of poles) when extended to~$\bC$.
\end{proposition}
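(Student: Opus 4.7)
The plan is to show each of the five primitive $L$-series $\zeta, L_3^{(1)}, L_3^{(2)}, L_4, L_5$ extends meromorphically to all of $\bC$; equation~(\ref{E14}) then delivers the same for every $L$-series of a representation of $G$. The Dedekind zeta $\zeta = \zeta_k$ is classically meromorphic, and from~(\ref{E29}) one reads off
$$
L_4 = \zeta_5/\zeta, \qquad L_5 = \zeta_6/\zeta, \qquad L_3^{(1)} L_3^{(2)} = \zeta_{12}/\zeta_6,
$$
so $L_4$ and $L_5$ are immediately meromorphic as quotients of Dedekind zetas, and so is the product $L_3^{(1)} L_3^{(2)}$. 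The real content is to separate the two three-dimensional series.

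To do so, I would induce a suitable one-dimensional character from a Sylow $5$-subgroup. Let $H \le G$ be cyclic of order $5$, with fixed field $\Omega = \Omega_{12}$, let $g \in H$ be a generator (a $5$-cycle of $G$), and let $\psi \colon H \to \bC^\times$ be the character $\psi(g) = e^{2\pi i/5}$. Using the character table of $A_5$---in which $\chi_3^{(1)}$ and $\chi_3^{(2)}$ take the distinct values $\tfrac{1 \pm \sqrt 5}{2}$ on the two $A_5$-conjugacy classes of $5$-cycles, while $\chi_5$ vanishes on them---a short Frobenius reciprocity calculation yields
$$
\chi_\psi \;=\; \chi_3^{(1)} + \chi_4 + \chi_5.
$$
Satz 1 combined with~(\ref{E14}) then translates this into
$$
L(s, \psi; \Omega) \;=\; L_3^{(1)}(s)\, L_4(s)\, L_5(s).
$$

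The extension $K/\Omega$ is cyclic of prime order $5$, so Satz 2 applies (as noted at the start of this section), and $L(s,\psi;\Omega)$ is a classical Weber--Hecke Abelian $L$-series with known meromorphic continuation to $\bC$. Dividing by $L_4 L_5$ (meromorphic, and nonzero by the Euler product for $\Re(s) > 1$) exhibits
$$
L_3^{(1)}(s) \;=\; L(s,\psi;\Omega)\bigl/\bigl(L_4(s)\, L_5(s)\bigr)
$$
as meromorphic on $\bC$. Replacing $\psi$ by the character $g \mapsto e^{4\pi i/5}$ swaps the two conjugacy classes $\{g, g^{-1}\}$ and $\{g^2, g^{-2}\}$ of $5$-cycles in the Frobenius reciprocity computation, hence interchanges $\chi_3^{(1)}$ and $\chi_3^{(2)}$ in the decomposition, and the same argument yields $L_3^{(2)}$ meromorphic.

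The main obstacle is the character-theoretic step: one must locate a subgroup $H$ and a character $\psi$ whose induction picks out $\chi_3^{(1)}$ alone---rather than both three-dimensional characters together, as happens when $H$ is $\bZ/3$ or $A_4$---combined only with characters whose $L$-series are already known to be meromorphic. The Sylow $5$-subgroup is the natural choice here precisely because $\chi_3^{(1)}$ and $\chi_3^{(2)}$ are distinguished exactly on the elements of order $5$; any subgroup avoiding $5$-cycles cannot separate them.
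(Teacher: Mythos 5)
Your argument is correct and is essentially the paper's own proof: both obtain $L_3^{(1)}$ and $L_3^{(2)}$ by recognizing the nontrivial Abelian $L$-series of the cyclic degree-$5$ extension $K/\Omega_{12}$ (meromorphic by Hecke together with Satz~2) as $L_3^{(i)} L_4 L_5$, and then dividing by the already-established $L_4$ and $L_5$. The only difference is how the decomposition of the induced character is found: you compute $\chi_\psi = \chi_3^{(1)} + \chi_4 + \chi_5$ explicitly from the $A_5$ character table via Frobenius reciprocity, while the paper deduces the same shape indirectly from the fact that the product of the four Abelian $L$-series is $\bigl(L_3^{(1)} L_3^{(2)}\bigr)^2 L_4^4 L_5^4$ and that $12 = 3+4+5$ is the only admissible degree decomposition.
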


\begin{proof}
It is enough to verify this for irreducible representatives. The function $\zeta$ and each $\zeta_n$ are already known to be meromorphic (Hecke).
Note that  the first two equations of~(\ref{E29}) show that $L_4$ and $L_5$ are meromorphic. 

Observe that $K$ is cyclic of degree 5 over an intermediate field $\Omega_{12}$ of degree 12 over $k$. There are four
 primitive $L$-series (in addition to $\zeta_{12}$) associated to $K/\Omega_{12}$ and these can all be expressed in terms of our primitive $L$-series. 
 They are also known to be entire (Hecke). Note that $\zeta_{60}$ factors as $\zeta_{12}$ times the product of these four $L$-series. 
Comparing the expressions for $\zeta_{12}$ and $\zeta_{60}$ 
in (\ref{E29}), we see that the product of these four $L$-series for $K/\Omega_{12}$ is
$$
 (L_3^{(1)}  L_3^{(2)} )^2 \left( L_4\right)^4 \left( L_5 \right)^4.
$$
Each of these $L$-series is based on a one-dimensional representative of $\mathrm{Gal}(K/\Omega_{12})$, and so can be
expressed in terms of a degree $12$ induced representation of~$G$. 
So each decomposition gives 12 as the sum in terms of the integers 3, 4, 5.
We conclude that $12$ is decomposed as $3 + 4 + 5$ for each (note that $5$ cannot occur twice in any one of the sums, so $5$ must occur exactly once
in each sum).
We conclude that two of these $L$-series factor as $L_3^{(1)} L_4 L_5$ and the other two as $L_3^{(2)} L_4 L_5$. (By the way, this gives an example of a field with identical primitive $L$-series where conjugate characters give the same $L$-series)\footnote{\color{blue}Note that any five cycle and its inverse
are in the same conjugacy class of $A_4$, which by Frobenius reciprocity implies that $L$-series for conjugate characters for $K/\Omega_{12}$ will
have the same decomposition.}. So $L_3^{(1)}$ and $L_3^{(2)}$ are meromorphic.
\end{proof}

The above proof also shows the following:

\begin{proposition}
The functions $L_3^{(1)} L_4 L_5$ and $L_3^{(2)} L_4 L_5$ are entire.
\end{proposition}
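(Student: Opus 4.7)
The plan is to observe that this proposition is essentially a byproduct of the argument already carried out in the proof of Proposition~\ref{prop_4}, so very little additional work is required. The key point to extract from that proof is the identification of the four nontrivial primitive $L$-series of the cyclic extension $K/\Omega_{12}$ with the four products $L_3^{(1)} L_4 L_5, L_3^{(1)} L_4 L_5, L_3^{(2)} L_4 L_5, L_3^{(2)} L_4 L_5$ via Satz~1.

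First I would recall that $\mathrm{Gal}(K/\Omega_{12})$ is cyclic of order $5$, so its four nontrivial irreducible characters $\psi_2, \psi_3, \psi_4, \psi_5$ are all one-dimensional and nontrivial. By Hecke's theorem (which we may invoke since Satz~2 has been established for Abelian extensions built from cyclic factors of prime order, and $5$ is prime), the corresponding Abelian $L$-series $L(s,\psi_j; \Omega_{12})$ for $j=2,3,4,5$ all extend to \emph{entire} functions on $\bC$. Next, by Satz~1 we have
$$
L(s, \psi_j; \Omega_{12}) = L(s, \chi_{\psi_j}; k)
$$
for each $j$, where $\chi_{\psi_j}$ is the character of $G$ induced from $\psi_j$. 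From the analysis in the proof of Proposition~\ref{prop_4} we already know the decompositions of these four induced $L$-series: two of them equal $L_3^{(1)} L_4 L_5$ and the other two equal $L_3^{(2)} L_4 L_5$.

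Combining these two facts, each of $L_3^{(1)} L_4 L_5$ and $L_3^{(2)} L_4 L_5$ equals an entire function, hence is itself entire. I expect no obstacle here beyond ensuring that the identification in Proposition~\ref{prop_4} is interpreted as an equality of (possibly multivalued) meromorphic continuations rather than only up to finitely many Euler factors; but this is precisely what the uniqueness built into the functional equation (Lemma~\ref{fixingL_lemma}) guarantees, so the equalities really are exact and entireness is preserved. Thus the proposition follows immediately from the work already done.
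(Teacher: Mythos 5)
Your proposal is correct and matches the paper's own argument exactly: the paper states this proposition with the remark that ``the above proof also shows the following,'' meaning precisely that the four nontrivial Abelian $L$-series of the cyclic extension $K/\Omega_{12}$ are entire by Hecke and were already identified in the proof of Proposition~\ref{prop_4} as two copies each of $L_3^{(1)} L_4 L_5$ and $L_3^{(2)} L_4 L_5$. Your additional care about the equalities being exact (via Lemma~\ref{fixingL_lemma}) rather than up to finitely many Euler factors is a sensible and correct refinement of the same argument.
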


We can identify other entire function:

\begin{proposition}
The function $L_5$ is entire.
\end{proposition}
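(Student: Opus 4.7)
The plan is to exhibit $L_5^{\,2}$ as the quotient of two Dedekind zeta functions coming from an abelian subextension of $K/k$ (where Hecke's theorem applies directly), and then to upgrade entirety of $L_5^{\,2}$ to entirety of $L_5$ itself using the meromorphicity already established in Proposition \ref{prop_4}.

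First I would locate the right tower inside $G \cong A_5$. Fix a point stabilizer $A_4 \subset A_5$ together with its unique Sylow $2$-subgroup $V_4 \triangleleft A_4$ (the Klein four group of double transpositions in that $A_4$), and consider the fixed fields $\Omega_5 = K^{A_4}$ and $\Omega_{15} = K^{V_4}$. Since $V_4 \subseteq A_4$ we have $\Omega_5 \subseteq \Omega_{15}$; since $V_4$ is normal in $A_4$ with quotient $A_4/V_4$ cyclic of order $3$, the extension $\Omega_{15}/\Omega_5$ is abelian (indeed cyclic of prime degree $3$). Writing $\chi$ and $\bar\chi$ for its two nontrivial characters, the standard factorization for cyclic extensions reads
\[
\zeta_{\Omega_{15}}(s) \;=\; \zeta_{\Omega_5}(s)\, L(s,\chi;\Omega_5)\, L(s,\bar\chi;\Omega_5),
\]
and since Satz 2 has been verified in this section for intermediate abelian extensions inside $K/k$ (the Galois group of the abelian extension in question is cyclic of prime order), Hecke's theorem applies to give that both $L(s,\chi;\Omega_5)$ and $L(s,\bar\chi;\Omega_5)$ are entire. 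Therefore $\zeta_{\Omega_{15}}/\zeta_{\Omega_5}$ is entire on $\bC$.

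Next I would match this against the factorizations in (\ref{E29}): from $\zeta_5 = \zeta L_4$ and $\zeta_{15} = \zeta L_4 \, L_5^{\,2}$, dividing yields $\zeta_{15}/\zeta_5 = L_5^{\,2}$. Hence $L_5^{\,2}$ is entire on $\bC$.

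Finally, Proposition \ref{prop_4} already guarantees that $L_5$ is meromorphic on $\bC$. If $L_5$ had a pole of order $n \ge 1$ at some $s_0 \in \bC$, then $L_5^{\,2}$ would have a pole of order $2n$ there, contradicting the entirety of $L_5^{\,2}$. Hence $L_5$ has no poles and is entire. The only step requiring any real insight is the choice of tower: one needs an abelian subextension of $K/k$ whose zeta quotient is precisely a power of $L_5$ alone, and the fortunate coincidence $V_4 \triangleleft A_4$ inside $A_5$, combined with the specific shape of the two relevant rows of (\ref{E29}), is exactly what makes this work. Once the pair $\Omega_5 \subset \Omega_{15}$ is in hand, the remainder is formal.
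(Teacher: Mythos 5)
Your proof is correct and follows the same essential route as the paper: the tower $\Omega_5 \subset \Omega_{15}$ with $\Omega_{15}/\Omega_5$ cyclic cubic, the two nontrivial Abelian characters whose $L$-series are entire by Hecke (via Satz 2 for this intermediate Abelian extension), and the identity $\zeta_{15}/\zeta_5 = L_5^{\,2}$ from (\ref{E29}). The only divergence is the final step. The paper does not pass through ``$L_5^{\,2}$ entire, hence $L_5$ entire'': instead it observes that each of the two induced characters has degree $5$, and since the product of the two corresponding $L$-series is $L_5^{\,2}$, the multiset of irreducible constituents of the two induced characters together is exactly two copies of the degree-$5$ irreducible; a degree count then forces each induced character to equal that irreducible, so each Abelian $L$-series \emph{is} $L_5$. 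That yields the slightly stronger conclusion that $L_5$ is itself an Abelian $L$-series (and furnishes the paper's parenthetical example of distinct representations giving the same $L$-series), with no appeal to Proposition~\ref{prop_4}. Your square-root step is sound --- a meromorphic function whose square is entire has no poles --- but it makes the argument depend on the prior meromorphy result, whereas the paper's version is self-contained at this point.
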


\begin{proof}
We have an intermediate extension $\Omega_5/k$ of degree $5$, and an intermediate extension $\Omega_{15}/k$ of degree 15
such that $\Omega_{15}$ is a cyclic cubic extension of $\Omega_5$. This gives us two nontrivial degree 1 characters of $\mathrm{Gal}(K/\Omega_{5})$
of order 3,
whose induced characters are  degree 5 characters of $G$. The associated $L$-series are entire since they are Abelian $L$-series with nontrivial characters.
The product of these series is~$\zeta_{15}/\zeta_5 = (L_5)^2$.
Since the associated induced characters of $G$ are of degree 5, both $L$-series must be equal to $L_5$. So $L_5$ is entire.
(And this gives another example where distinct representations gives the same $L$-series).\end{proof}

\begin{proposition}
The products $L_3^{(1)}  L_3^{(2)}$ and $L_3^{(1)}  L_3^{(2)} L_4$ are entire.
\end{proposition}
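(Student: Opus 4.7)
The plan is to exhibit each of the two products as an abelian, nontrivial $L$-series attached to a suitable intermediate field, so that holomorphy follows from Hecke's theorem via the abelian case of Satz 2. Both intermediate fields come from subgroups of $A_5$: the subgroup $D_5$ (of order $10$) has fixed field $\Omega_6$ with $\mathrm{Gal}(K/\Omega_6) \cong D_5$, and the subgroup $S_3$ (of order $6$) has fixed field $\Omega_{10}$ with $\mathrm{Gal}(K/\Omega_{10}) \cong S_3$.

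For $L_3^{(1)} L_3^{(2)}$, I would take the unique nontrivial one-dimensional character $\psi$ of $D_5$, which is trivial on the commutator subgroup $[D_5, D_5] \cong \bZ/5$. The fixed field of $[D_5, D_5]$ inside $K$ is an intermediate field $\Omega'$ with $\Omega_6 \subsetneq \Omega' \subseteq K$, and $\mathrm{Gal}(\Omega'/\Omega_6) \cong D_5/[D_5, D_5] \cong \bZ/2$ is abelian; through this quotient $\psi$ corresponds to a nontrivial abelian character of $\mathrm{Gal}(\Omega'/\Omega_6)$. By the remark at the end of Section 2, the $L$-series $L(s, \psi; \Omega_6)$ coincides with the abelian $L$-series attached to this character over $\Omega_6$, and hence is entire by Hecke's work together with the abelian case of Satz 2. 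On the other hand, by Satz 1 and Frobenius reciprocity, using the character table of $A_5$, one computes $\mathrm{Ind}_{D_5}^{A_5}\psi = \chi^2 + \chi^3$, giving $L(s, \psi; \Omega_6) = L_3^{(1)} L_3^{(2)}$.

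For $L_3^{(1)} L_3^{(2)} L_4$, I would run the parallel argument with the sign character $\mathrm{sgn}$ of $S_3$: it is trivial on the alternating subgroup $A_3 \triangleleft S_3$ and so factors through the abelianization $S_3/A_3 \cong \bZ/2$, which is the Galois group of a degree-$2$ extension $\Omega''/\Omega_{10}$ sitting inside $K$. Thus $L(s, \mathrm{sgn}; \Omega_{10})$ is again a nontrivial abelian $L$-series, hence entire. A parallel inner-product computation over $S_3$ yields $\mathrm{Ind}_{S_3}^{A_5}\mathrm{sgn} = \chi^2 + \chi^3 + \chi^4$, so $L(s, \mathrm{sgn}; \Omega_{10}) = L_3^{(1)} L_3^{(2)} L_4$. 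The only real work lies in the two Frobenius reciprocity calculations, which reduce to short inner-product sums over the groups of orders $10$ and $6$; no essentially new idea is required beyond what already appears in the proof of Proposition~\ref{prop_4}.
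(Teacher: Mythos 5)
Your proposal is correct and is essentially the paper's own argument: the field $\Omega'$ (the fixed field of $[D_5,D_5]$) is exactly $\Omega_{12}$ and $\Omega''$ is exactly $\Omega_{20}$, so you are using the same quadratic extensions $\Omega_{12}/\Omega_6$ and $\Omega_{20}/\Omega_{10}$ and the same nontrivial quadratic characters that Artin uses. The only (cosmetic) difference is that you identify the resulting entire abelian $L$-series by computing $\mathrm{Ind}\,\psi$ directly via Frobenius reciprocity, whereas the paper reads off the same answer as $\zeta_{12}/\zeta_6$ and $\zeta_{20}/\zeta_{10}$ from the factorization table (\ref{E29}).
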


\begin{proof}
The proof is similar to the last proof. The first comes from using the quadratic extension~$\Omega_{12}/\Omega_6$ where one produces an entire
$L$-series equal to $\zeta_{12}/\zeta_{6}$.
The second comes from using the quadratic extension~$\Omega_{20}/\Omega_{10}$ where one produces an entire
$L$-series equal to $\zeta_{20}/\zeta_{10}$.
\end{proof}

\begin{remark}
We do not get anything essentially new from the other Abelian extensions.
So $L_5$ and the combinations $L_3^{(1)} L_4 L_5, L_3^{(2)} L_4 L_5, L_3^{(1)} L_3^{(2)}$, and $L_3^{(1)} L_3^{(2)} L_4$
(and their products) are the only $L$-series we can prove are entire.
\end{remark}

{\color{blue}

\begin{remark}
Let's look at the other Abelian subextensions in addition to those treated in the above two Propositions:
\begin{itemize}
\item
$\Omega_{30}/\Omega_{15}$ gives the entire function $\zeta_{30} / \zeta_{15} = L_3^{(1)}  L_3^{(2)} L_4 L_5$.

\item
$K/\Omega_{30}$ gives the entire function $\zeta_{60} / \zeta_{30} = \left( L_3^{(1)}  L_3^{(2)}  L_4\right)^2 \left( L_5 \right)^2$.

\item
$K/\Omega_{20}$ gives the entire function $\zeta_{60} / \zeta_{20} = \left( L_3^{(1)}  L_3^{(2)} L_4\right)^2 \left( L_5 \right)^4$
which must factor into two Abelian (and so entire) $L$-functions
corresponding to the two nontrivial characters $\psi$ and $\psi^{-1}$ of the corresponding cyclic Galois group $H_3$ of order~$3$.
It turns out that the $L$-series associated to $\psi$ and $\psi^{-1}$ are equal and so are both  $L_3^{(1)}  L_3^{(2)}  L_4 \left( L_5 \right)^2$.
To see this, note that the decomposition depends on the the multiplicities of the simple characters $\chi^i$ in the corresponding induced representations, and
these can be calculated using Frobenius reciprocity:  the multiplicities are respectively
$$
\left< \psi, \mathrm{res}\,  \chi^i\right>_{H_3}, \quad\text{and}\quad
\left< \psi^{-1}, \mathrm{res}\, \chi^i\right>_{H_3}
$$
where in these inner products we restrict $\chi^i$ to $H_3$. However, a three cycle in~$A_5$ and its inverse are in the same conjugate class of $A_5$ and so have the same value under $\chi^i$, which means that these two inner products
are actually given by the same sum. This shows the multiplicities are the same.
 
\item
$K/\Omega_{15}$ gives the entire function  $\zeta_{60} / \zeta_{15} = ( L_3^{(1)}  L_3^{(2)} )^3 \left( L_4\right)^3 \left( L_5 \right)^3$
that factors into three entire functions coming from $\Omega_{30}/\Omega_{15}$ extensions.
Looking at the earlier case $\Omega_{30}/\Omega_{15}$, we see these three functions are each~$L_3^{(1)}  L_3^{(2)} L_4 L_5$.
\item $K/\Omega_{12}$ was treated above (Proposition~\ref{prop_4}).
\end{itemize}
\end{remark}
Observe that these entire functions are all just products of the entire products already considered; nothing new.
}

\begin{remark}
Of the zeta functions from (\ref{E29}), we see that the following are divisible by~$\zeta$ with entire quotient: $\zeta_{6}, \zeta_{12}, \zeta_{30}, \zeta_{60}$.
(This leaves the other half in question, namely $\zeta_{5}, \zeta_{10}, \zeta_{15}, \zeta_{20}$).
On can also verify immediately the relations between zeta functions from my earlier article~\cite{Artin1923}.
\end{remark}

{\color{blue}
\begin{remark}
This shows Artin's interest in the following question: if $K/k$ then is $\zeta_K/\zeta_k$ entire?
This helps motivate Artin's conjecture that primitive $L$-series that are not zeta functions are entire.

Artin's earlier article~\cite{Artin1923}, published in 1923, has some interesting relationships between these zeta functions 
in the current case of $G$ isomorphic to $A_5$, the icosahedral group. Some of these include
$$\zeta_{20}\, \zeta^2 = \zeta_5^2 \, \zeta_{12}, \qquad\text{and} \quad 
\zeta_{30}\, \zeta^2 = \zeta_{6}^2 \, \zeta_{20}.
$$
These are immediate given (\ref{E29}) above.
\end{remark}
}

\textbf{Hamburg, Mathematics Seminar, July 1923}

%
%
%
%
%
\bibliographystyle{plain}

\bibliography{../NumberTheoryClassics,../AitkenBooks,../MathHistoryArticles}

\end{document}